\begin{document}
\newcommand{\up}{\vspace*{-0.05cm}}
\allowdisplaybreaks[1]
\numberwithin{equation}{section}
\renewcommand{\theequation}{\thesection.\arabic{equation}}
\newtheorem{thm}{Theorem}[section]
\newtheorem{lemma}{Lemma}[section]
\newtheorem{pro}{Proposition}[section]
\newtheorem{prob}{Problem}[section]
\newtheorem{quest}{Question}[section]
\newtheorem{ex}{Example}[section]
\newtheorem{cor}{Corollary}[section]
\newtheorem{conj}{Conjecture}[section]
\newtheorem{cl}{Claim}[section]
\newtheorem{df}{Definition}[section]
\newtheorem{rem}{Remark}[section]
\newcommand{\beq}{\begin{equation}}
\newcommand{\eeq}{\end{equation}}
\newcommand{\<}[1]{\left\langle{#1}\right\rangle}
\newcommand{\be}{\beta}
\newcommand{\ee}{\end{enumerate}}
\newcommand{\Bul}{\mbox{$\bullet$ } }
\newcommand{\al}{\alpha}
\newcommand{\ep}{\epsilon}
\newcommand{\si}{\sigma}
\newcommand{\om}{\omega}
\newcommand{\la}{\lambda}
\newcommand{\La}{\Lambda}
\newcommand{\Ga}{\Gamma}
\newcommand{\ga}{\gamma}
\newcommand{\im}{\Rightarrow}
\newcommand{\2}{\vspace{.2cm}}
\newcommand{\es}{\emptyset}

\vspace{-2cm}
\markboth{R. Rajkumar and P. Devi}{Toroidal and projective-planar permutability graphs}
\title{\LARGE\bf Classification of finite groups with toroidal or projective-planar permutability graphs}
\author{R. Rajkumar\footnote{e-mail: {\tt rrajmaths@yahoo.co.in}},\ \ \
P. Devi\footnote{e-mail: {\tt pdevigri@gmail.com}}\\
{\footnotesize Department of Mathematics, The Gandhigram Rural Institute -- Deemed University,}\\ \footnotesize{Gandhigram -- 624 302, Tamil Nadu, India}\\[3mm]
Andrei Gagarin\footnote{e-mail: {\tt andrei.gagarin@rhul.ac.uk}}\\
{\footnotesize Department of Computer Science, Royal Holloway, University of London,}\\
{\footnotesize Egham, Surrey, TW20 0EX, UK}\vspace{1mm}\\
%\\
%\\
%\thanks{}\\
}
%\date{arXiv:1311.1707}
\date{}
\maketitle
\doublespacing
%%%%%%%%%%%%%%%%%%%
\begin{abstract}
Let $G$ be a group. \textit{The permutability graph of subgroups of $G$}, denoted by $\Gamma(G)$, is a graph having all the proper subgroups of $G$ as its vertices, and
two subgroups are adjacent in $\Gamma(G)$ if and only if they permute. In this paper, we classify the finite groups whose permutability graphs are toroidal or projective-planar.
In addition, we classify the finite groups whose permutability graph does not contain one of $K_{3,3}$, $K_{1,5}$, $C_6$, $P_5$, or $P_6$ as a subgraph.

\paragraph{Keywords:}Permutability graph, finite groups, genus, toroidal graph, nonorientable genus, projective-planar graph.
\paragraph{2010 Mathematics Subject Classification:} 05C25,  05C10.
\end{abstract}

\medskip%\bigskip
\section{Introduction}
\label{intro}
\noindent Various algebraic structures are subject of research in algebra. For example, groups, rings, fields, modules, etc. One of the ways to study properties of these algebraic structures is by using tools of graph theory. That is, by suitably defining a graph associated with an algebraic structure, we can study some specific algebraic properties of the structure by analyzing the graph and using graph-theoretic concepts. This has shown to be a fruitful approach in the field of algebraic combinatorics and, in the recent years, has been a topic of interest among researchers. In particular, there are various graphs that have been associated with groups. For instance, see~\cite{abdolla,cameron,will}. Also, several recent papers \cite{hung, MWY2012,W2006} deal with embeddability of graphs, associated with algebraic structures, on topological surfaces.\\
In \cite{asc}, Aschbacher defined a graph corresponding to a group $G$ as follows:
for a fixed prime $p$, all the subgroups of order $p$ in $G$ are vertices of the graph,
and two vertices are adjacent if the corresponding subgroups permute.
In this direction, to study the transitivity of permutability of subgroups of groups, Bianchi et al. \cite{binachi 2} defined a graph corresponding to a group $G$, called the \emph{permutability graph of the non-normal subgroups} of $G$, having all the non-normal subgroups of G as its vertices and two vertices adjacent if and only if the two corresponding subgroups permute. They mainly focused on the number of connected components and the diameter of this graph. Further investigations on this graph can be found in \cite{binachi 1, binachi}.
The authors in \cite{raj} consider a more general setting by associating a given group $G$ with a graph denoted by $\Gamma(G)$. The graph $\Gamma(G)$ is called the \emph{permutability graph of subgroups} of G, having the vertex set consisting of all proper subgroups of G, and two vertices $H$ and $K$ are adjacent in $\Gamma(G)$ if and only if $H$ and $K$ permute in $G$. In \cite{raj}, the authors mainly classify the finite groups whose permutability graph of subgroups is planar.
\begin{thm}\label{genus 19}(\cite[Theorem~5.1]{raj})
 Let $G$ be a finite group. Then $\Gamma(G)$ is planar if and only if $G$ is isomorphic to one of the following groups (where $p$ and $q$ are distinct primes):
%\begin{enumerate}
%\item $~\mathbb Z_{p^{\alpha}}$, $\alpha= 2$, 3, 4, 5;
%\item $~\mathbb Z_{{p^ \alpha}q}$,  $\alpha =1$, $2$;
%\item $~\mathbb Z_{p} \times \mathbb Z_{p}$, $p=2$, $3$;
%\item $~Q_8$;
%\item $~\mathbb Z_q  \rtimes \mathbb Z_p$;
%\item $~A_4$;
%\item $~\mathbb Z_q \rtimes_{2} \mathbb Z_{p^{2}} = \langle a,b~|~a^q= b^{p^2}= 1, bab^{-1}= a^i, {ord_{q}}(i)= p^2 \rangle$, where $p^2~|~(q-1)$.
%\end{enumerate}
 $\mathbb Z_{p^{\alpha}} (\alpha= 2, 3, 4, 5)$, $\mathbb Z_{{p^ \alpha}q} (\alpha =1, 2)$, $\mathbb Z_{p} \times \mathbb Z_{p} (p=2, 3)$, $Q_8$,
 $\mathbb Z_q  \rtimes \mathbb Z_p$, $A_4$, or
 $\mathbb Z_q \rtimes_{2} \mathbb Z_{p^{2}} = \langle a,b~|~a^q= b^{p^2}= 1, bab^{-1}= a^i, {ord_{q}}(i)= p^2 \rangle$ with $p^2~|~(q-1)$.
\end{thm}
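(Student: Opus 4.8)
The plan is to prove the two implications separately: the forward direction is a finite verification, and the converse carries the real content.

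\textbf{The ``if'' direction.} For each group $G$ on the list I would write down its subgroup lattice and read off the permutability relation, using that $H$ and $K$ are adjacent in $\Gamma(G)$ exactly when $HK=KH$, that this is automatic if $H$ or $K$ is normal in $G$, and that it also holds whenever $HK=G$. A short computation then gives: $\Gamma(\mathbb{Z}_{p^{\alpha}})$ is $K_{\alpha-1}$ (the proper nontrivial subgroups form a chain, hence pairwise permute), so one gets $K_1,K_2,K_3,K_4$ for $\alpha=2,3,4,5$; $\Gamma(\mathbb{Z}_{pq})=K_2$ and $\Gamma(\mathbb{Z}_{p^2q})=K_4$; $\Gamma(\mathbb{Z}_p\times\mathbb{Z}_p)=K_{p+1}$, i.e.\ $K_3$ or $K_4$ for $p=2,3$; $\Gamma(Q_8)=K_4$; $\Gamma(\mathbb{Z}_q\rtimes\mathbb{Z}_p)$ is the star $K_{1,q}$ (the normal subgroup of order $q$ joined to the $q$ Sylow $p$-subgroups, which pairwise fail to permute since their product has $p^2\nmid pq$ elements); $\Gamma(A_4)$ is a $K_4$ on $V_4$ and the three subgroups of order $2$, with the four subgroups of order $3$ attached as pendants at $V_4$; and $\Gamma(\mathbb{Z}_q\rtimes_2\mathbb{Z}_{p^2})$ is $q$ copies of $K_4$ sharing one common edge (the edge joining the normal subgroups $\langle a\rangle$ and $\langle a,b^p\rangle$; the $q$ Sylow $p$-subgroups are paired off with their unique subgroups of order $p$). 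Each of these graphs is plainly planar.

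\textbf{The ``only if'' direction: tools and reduction.} I would argue contrapositively, exhibiting a $K_5$ or a $K_{3,3}$ in $\Gamma(G)$ whenever $G$ is not on the list, so that planarity fails by Kuratowski's theorem. Two facts are used repeatedly. First (inheritance): if $L<G$ then permutability among subgroups of $L$ does not depend on the ambient group and proper subgroups of $L$ remain proper in $G$, so $\Gamma(L)$ is an induced subgraph of $\Gamma(G)$; likewise $\Gamma(G/N)$ is an induced subgraph of $\Gamma(G)$ for every nontrivial proper normal subgroup $N$ of $G$. Second (a counting bound): a proper nontrivial normal subgroup is adjacent to every other vertex, so if $G$ has $m$ proper nontrivial normal subgroups and $n$ proper nontrivial subgroups in total, then $m\ge5$ already yields $K_5$, and $m\ge3$ together with $n\ge6$ yields $K_{3,3}$; hence if $\Gamma(G)$ is planar then $m\le4$, and moreover $m\le2$ or $n\le5$. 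Combining the counting bound with $\Gamma(\mathbb{Z}_n)=K_{d(n)-2}$ ($d$ the number of divisors) and the fact that every non-cyclic abelian group contains a copy of $\mathbb{Z}_p\times\mathbb{Z}_p$, one finds that the abelian $G$ with $\Gamma(G)$ planar are exactly $\mathbb{Z}_{p^{\alpha}}$ with $\alpha\le 5$, $\mathbb{Z}_{pq}$, $\mathbb{Z}_{p^2q}$, and $\mathbb{Z}_p\times\mathbb{Z}_p$ with $p\le 3$. Inheritance from these and from a few small non-planar groups then rules out $|G|$ having three or more distinct prime divisors and bounds the exponents, so it remains to treat $|G|=p^a$ and $|G|=p^aq^b$.

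\textbf{The ``only if'' direction: $p$-groups and the main obstacle.} For a nonabelian $p$-group $G$ one uses that $G$ has at least $p+1$ maximal subgroups, all normal, together with its centre, hence at least $p+2$ proper nontrivial normal subgroups; the counting bound then excludes $p\ge3$ outright and leaves only $D_8$ (where $m=4$, $n=8$ gives a $K_{3,3}$) and $Q_8$ (where $\Gamma=K_4$) among groups of order $8$, while every $2$-group of order at least $16$ is eliminated by inheritance together with the counting bound, or by direct computation; so among $p$-groups only $\mathbb{Z}_{p^{\alpha}}$ with $\alpha\le 5$, $\mathbb{Z}_2^2$, $\mathbb{Z}_3^2$, and $Q_8$ survive. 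What remains, and what I expect to be the bulk of the work, is the nonabelian non-$p$-groups of order $p^aq^b$: here $G$ is solvable by Burnside's theorem, and I would run through the possibilities for the normal Sylow subgroup (its order $p$, $p^2$, $q$, or $q^2$; its isomorphism type, $\mathbb{Z}_{p^2}$ versus $\mathbb{Z}_p\times\mathbb{Z}_p$; and the order of the complementary Sylow subgroup's action on it), in each case either exhibiting enough normal vertices to embed $K_5$ or $K_{3,3}$, or computing $\Gamma(G)$ directly. The survivors are $\mathbb{Z}_q\rtimes\mathbb{Z}_p$ (order $pq$), $A_4$ coming from $\mathbb{Z}_2^2\rtimes\mathbb{Z}_3$ (every other $\mathbb{Z}_p^2\rtimes\mathbb{Z}_q$ dies because $\mathbb{Z}_p\times\mathbb{Z}_p$ is already non-planar for $p\ge5$, while for $p=3$ the only possible actions leave at least five normal subgroups), and $\mathbb{Z}_q\rtimes_2\mathbb{Z}_{p^2}$ with the action of full order $p^2$. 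I expect the delicate point — the reason the counting bound alone is not enough — to be precisely this last family: for the companion group $\mathbb{Z}_q\rtimes_1\mathbb{Z}_{p^2}$ with an action of order $p$ one has three dominating vertices $\langle a\rangle$, the central $\langle b^p\rangle$, and $\langle a,b^p\rangle$, while the $q\ge3$ Sylow $p$-subgroups supply the other part of a $K_{3,3}$, whereas for the faithful action those Sylow subgroups split off into separate $K_4$-blocks and $\Gamma(G)$ stays planar, so one genuinely has to compute the permutability graph in these borderline cases. The same explicit approach disposes of dihedral, dicyclic (other than $Q_8$), and $\mathbb{Z}_q^2\rtimes\mathbb{Z}_p$-type groups, and assembling all the cases then gives exactly the stated list.
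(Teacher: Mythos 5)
First, a point of reference: the paper you are reading does not prove this theorem at all --- it is imported verbatim as \cite[Theorem~5.1]{raj}, so there is no in-paper proof to compare against. Judged on its own terms, your outline is consistent with the fragments of that proof that the present paper does reuse (e.g.\ $\Gamma(Q_8)=K_4$, $\Gamma(G)\cong K_{1,q}$ for nonabelian order $pq$, $\Gamma(A_4)\cong K_1+(K_3\cup\overline{K}_4)$, $\Gamma(\mathbb Z_q\rtimes_2\mathbb Z_{p^2})\cong K_2+qK_2$), your ``if''-direction computations are all correct, and your counting bound on normal subgroups plus the inheritance of $\Gamma(L)$ and $\Gamma(G/N)$ as (induced) subgraphs is exactly the right engine for the converse. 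You also correctly isolate the genuinely delicate borderline, namely $\mathbb Z_q\rtimes_1\mathbb Z_{p^2}$ (where $\langle a\rangle$, $\langle b^p\rangle$, $\langle ab^p\rangle$ dominate and $K_{3,3}$ appears) versus $\mathbb Z_q\rtimes_2\mathbb Z_{p^2}$ (where the graph decomposes into $K_4$-blocks on a common edge).

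The genuine gap is the non-solvable case. You dispose of orders with three or more distinct prime divisors ``by inheritance,'' which works for solvable $G$ (a Sylow basis $\{P,Q,R\}$ gives the pairwise permuting subgroups $P,Q,R,PQ,QR,PR$, hence $K_5$), but by Burnside every non-solvable group has order divisible by at least three primes and admits no Sylow basis, so this argument simply does not reach them; nor does inheritance from ``a few small non-planar groups'' obviously apply, since e.g.\ the proper subgroups of $A_5$ ($A_4$, $D_{10}$, $S_3$, and cyclic groups) all have planar permutability graphs, so one must work in $A_5$ itself (the five point-stabilizers isomorphic to $A_4$ and the Sylow $5$-subgroups satisfy $H_iK_j=A_5$, yielding $K_{4,5}\supseteq K_{3,3}$). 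The standard repair --- the one this paper carries out in Section~\ref{sec:5} for the toroidal analogue --- is to pass to a minimal simple sub-quotient via Corollary~\ref{c1}, invoke Thompson's classification (Theorem~\ref{genus 13}), and check $L_2(2^p)$, $L_2(3^p)$, $L_3(3)$, $L_2(p)$ and $Sz(2^q)$ one by one; your proof needs this (or an equivalent) step added. Separately, be aware that your treatment of the nonabelian groups of order $p^aq^b$ is a plan rather than a proof: the survivors you name are the right ones, but the case analysis over the structure and action of the normal Sylow subgroup is precisely where the bulk of \cite[Theorem~5.1]{raj} lives and would still have to be written out.
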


A natural question in this direction is to describe the groups having their permutability graph of subgroups of genus one, that is toroidal, or of nonorientable genus one, that is projective-planar. Another motivation for investigating the toroidality and projective-planarity of a permutability graph of subgroups of groups is to exhibit some structure of permutability of subgroups within a given group.
In this paper, we solve these problems in the case of finite groups by classifying the finite groups whose permutability graph of subgroups is toroidal or projective-planar (see Theorem~\ref{t16} in Section~\ref{sec:6} below).
In particular, we show that all the projective-planar permutability graphs are toroidal, which is not the case for arbitrary graphs (e.g., see pp.\,367-368 and Figure~13.33 in \cite{KK2005}).
As a consequence of this research, we also classify finite groups whose permutability graph of subgroups is in some class of graphs characterized by a forbidden subgraph (see Corollary~\ref{genus 20}), which is one of the main applications of these results for group theory. Finally, we formulate related questions for infinite groups.

%%%%%%%%%%%%%%%%%%%%%

\section{Preliminaries and notation} \label{sec:2}

In this section, we first recall some concepts, notation, and results in graph theory, which are used later in the subsequent sections. We use standard basic graph theory terminology and notation (e.g., see \cite{arthur}).
Let $G$ be a simple graph with a vertex set $V$ and an edge set $E$. If any two vertices in $G$ are adjacent, then it is called a
\textit{complete graph}. A complete graph on $n$ vertices is denoted by $K_n$.  $G$ is said to be \textit{bipartite} if $V$ can be partitioned into two subsets $V_1$ and $V_2$ such that every edge of $G$ joins a vertex of $V_1$ to a vertex of $V_2$. Then $(V_1, V_2)$ is called a \textit{bipartition} of $G$.
Moreover, if every vertex of $V_1$ is adjacent to every vertex of $V_2$, then $G$ is called \textit{complete bipartite} and denoted by $K_{m,n}$, where $|V_1|=m$, $|V_2|=n$ (without loss of generality, $m\le n$).
A \textit{path} connecting two vertices $u$ and $v$ in $G$ is a finite sequence  $(u=) v_0, v_1, \ldots, v_n (=v)$ of distinct vertices (except,
possibly, $u$ and $v$) such that $u_i$ is adjacent to $u_{i+1}$ for all $i=0, 1, \ldots , n-1$.
A path is a \emph{cycle} if $u=v$. The length of a path or a cycle is the number of edges in it. A path or a cycle of length $n$ is denoted by $P_n$ or $C_n$, respectively. We define a graph $G$ to be \emph{$X$-free} if it does not contain a subgraph isomorphic to a given graph $X$. $\overline{G}$ denotes the complement of a graph $G$, and, for an integer $q\ge 1$, $qG$ denotes the graph composed of $q$ disjoint copies of $G$. For two graphs $G$ and $H$, $G\cup H$ denotes a disjoint union of $G$ and $H$, $G+H$ denotes a graph with the vertex set composed of the vertices of $G$ and $H$ and the edge set composed of the edges of $G$ and $H$ plus all the edges $uv$ such that $u\in G$ and $v\in H$.

%%%%%%%%%%%%%%%%

A graph is said to be \textit{embeddable} on a topological surface if it can be drawn on the surface in such a way that no two edges cross.
The (orientable) \textit{genus} of a graph $G$, denoted by $\gamma(G)$, is the smallest non-negative integer $n$ such that $G$ can be embedded on the sphere with $n$ handles.
A graph is \textit{planar} if its genus is zero and \textit{toroidal} if its genus is equal to one.
For non-orientable topological surfaces (e.g., the projective plane, Klein bottle, etc.),
the \textit{nonorientable genus} of $G$ is the smallest integer $q$ such that $G$ can be embedded on the sphere with $q$ crosscaps, and it is denoted by $\overline{\gamma}(G)$.
The projective plane is the sphere with one crosscaps, and can be represented by a disk with antipodal (opposite) points on its boundary identified.
Respectively, a graph is \textit{projective-planar} if its nonorientable genus is equal to one.

A \textit{topological obstruction} for a surface is a graph $G$ of minimum vertex degree at least three such that $G$ does not embed on the surface,
but $G-e$ is embeddable on the surface for every edge $e$ of $G$. A \emph{minor-order obstruction} $G$ is a topological obstruction with the additional
property that, for each edge $e$ of $G$, $G$ with the edge $e$ contracted embeds on the surface.
Let $v$ be a vertex of degree three in a graph $G$, adjacent to (distinct) vertices $v_1$, $v_2$, $v_3$. Then a \textit{wye-delta}
transformation of $G$ is the operation of deleting $v$ and adding the edges of triangle with the vertex set $\{v_1$, $v_2$, $v_3\}$ in $G$.
It is known that wye-delta transformations preserve embeddability of graphs in a given topological surface, i.e. they have embedding-hereditary properties (for example, see \cite{arch}). In particular, the class of graphs embeddable on the torus (i.e. toroidal and planar graphs embedded on the torus) is closed under wye-delta transformations.

The following results are used in the forthcoming sections.
\begin{thm} \label{genus 100} (\cite[Theorems 6.37, 6.38 and 11.19, 11.23]{arthur})
\begin{enumerate}[{\normalfont (1)}]
\item $\gamma(K_n)=\displaystyle\left\lceil \frac{(n-3)(n-4)}{12}\right\rceil$, $n\geq 3$;

    $\gamma(K_{m,n})=\displaystyle\left\lceil \frac{(m-2)(n-2)}{4}\right\rceil$, $m$, $n\geq 2$.
\item $\overline{\gamma}(K_n)=\left\{
   	\begin{array}{ll}
   		\displaystyle\left\lceil \frac{(n-3)(n-4)}{6} \right\rceil, & \mbox{~~if~ } n\geq 3, n\neq 7;  \\
   		3, & \mbox{~~if~ } n=7;
   	\end{array}
   \right.$

$\overline{\gamma}(K_{m,n})=\displaystyle\left\lceil \frac{(m-2)(n-2)}{2}\right\rceil$, $m$, $n\geq 2$.
\end{enumerate}
\end{thm}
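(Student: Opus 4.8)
\noindent\emph{Proof proposal.} The plan is to prove each equality by matching lower and upper bounds; the lower bounds are uniform and elementary, while the upper bounds rest on explicit constructions. For the lower bounds, recall that a minimum-genus (orientable or nonorientable) embedding of a connected graph is cellular, so Euler's formula $V-E+F=\chi$ applies, where $\chi=2-2\gamma$ on the orientable surface of genus $\gamma$ and $\chi=2-\overline{\gamma}$ on the nonorientable surface with $\overline{\gamma}$ crosscaps. If every face is bounded by at least $t$ edges and every edge lies on at most two face boundaries, then $2E\ge tF$, so $F\le 2E/t$ and hence $\chi\le V-E+2E/t=V-\tfrac{t-2}{t}E$. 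Taking $t=3$ for $K_n$ (girth $3$, $V=n$, $E=\binom{n}{2}$) and $t=4$ for the bipartite $K_{m,n}$ (girth $4$, $V=m+n$, $E=mn$), and solving for the (nonnegative integer) genus, one obtains exactly the four ceiling expressions in the statement --- with the single caveat that for $K_7$ this computation only yields $\overline{\gamma}(K_7)\ge 2$.

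For the upper bounds one must exhibit embeddings attaining these values, i.e.\ \emph{triangular} embeddings of $K_n$ (all faces triangles) and \emph{quadrilateral} embeddings of $K_{m,n}$ whenever equality in the Euler count is arithmetically possible (with one larger face otherwise). I would build these by the rotation-system / current-graph machinery of Ringel and Youngs: design a current graph whose derived rotation scheme is the required embedding. For $\gamma(K_n)$ this is precisely the solution of the Heawood map-colouring problem and necessarily breaks into twelve residue classes modulo $12$, several of which require additional ``vortex'' adjustments to the basic current graph; for $\overline{\gamma}(K_n)$ the analogous analysis runs modulo $6$. For $K_{m,n}$ the situation is far simpler: a single family of rotation schemes (equivalently, adding handles, or crosscaps, one at a time to a ``brick-wall'' drawing in the plane) realizes the bound for all $m,n\ge 2$ in both the orientable and nonorientable cases.

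The main obstacle is the $K_n$ upper bound: the lower bound is a one-line computation, but the extremal embeddings of $K_n$ are the substantive content of the Ringel--Youngs theorem and are genuinely case-laden. A second, isolated point that must be handled separately is the exceptional value $\overline{\gamma}(K_7)=3$: since the Euler bound gives only $2$, one has to rule out an embedding of $K_7$ in the Klein bottle. I would do this by showing that a hypothetical $14$-face triangular embedding of $K_7$ on the Klein bottle is impossible (a short parity/rotation argument) and then exhibiting a $3$-crosscap embedding directly; this also explains why the otherwise clean formula fails at exactly $n=7$.
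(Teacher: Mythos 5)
This theorem is not proved in the paper at all: it is quoted verbatim, with attribution, from White's book (Theorems 6.37, 6.38, 11.19, 11.23 of \cite{arthur}), so there is no in-paper argument to compare yours against. Judged on its own, your sketch is a faithful roadmap of the classical proofs. The Euler-formula lower bounds are exactly as you compute them (one should, as you do, invoke the fact that a minimum-genus embedding of a connected graph may be taken to be cellular so that $V-E+F=\chi$ applies), and you correctly identify the three separate bodies of work needed for the upper bounds: the Ringel--Youngs current-graph constructions for $\gamma(K_n)$ with their twelve residue classes, the analogous (historically earlier, due to Ringel) nonorientable constructions modulo $6$, and the much easier quadrilateral embeddings of $K_{m,n}$ --- for the nonorientable bipartite case the simplified construction is Bouchet's, which in fact appears as reference \cite{bou} in this paper's bibliography. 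You also correctly isolate the one genuine exception, $\overline{\gamma}(K_7)=3$: Euler gives only $\overline{\gamma}(K_7)\ge 2$, any Klein-bottle embedding would be forced to be triangular with $14$ faces, and Franklin's theorem rules this out. The only honest caveat is that your proposal names rather than supplies the hard content: the upper bound for $\gamma(K_n)$ \emph{is} the Ringel--Youngs theorem, a decade-long, heavily case-split body of work, so what you have written is a correct plan and an accurate map of where the difficulty lies, not a self-contained proof --- which is presumably why the authors, like everyone else, simply cite it.
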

As a consequence of Theorem~\ref{genus 100}, one can see that $\gamma(K_n)>1$ for $n\geq 8$, $\overline{\gamma}(K_n)>1$ for $n\geq 7$, $\gamma(K_{m,n})>1$  if either $m\geq4$, $n\geq 5$ or $m\geq 3$, $n\geq 7$, and $\overline{\gamma}(K_{m,n})>1$ if either $m\geq3$, $n\geq5$ or $m=n=4$.

%%%%%%%%%%%%%%

Neufeld and Myrvold \cite{pra} have shown the following.
\begin{thm}(\cite{pra})\label{genus 501}
There are exactly three eight-vertex obstructions $\mathcal{A}_1, \mathcal{A}_2, \mathcal{A}_3$ for the torus, each of them being topological and minor-order (see Figure~\ref{fig:8vertex}).
\end{thm}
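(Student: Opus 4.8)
The plan is to split the statement into a verification half and a completeness half: first one checks that each of $\mathcal{A}_1$, $\mathcal{A}_2$, $\mathcal{A}_3$ drawn in Figure~\ref{fig:8vertex} really is a minor-order obstruction for the torus (which in particular makes it a topological one), and then one shows that no other graph on eight vertices is a torus obstruction at all.

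For the verification half, fix one of the three graphs, say $\mathcal{A}_i$. That $\mathcal{A}_i$ has minimum degree at least three is read directly off the figure. The crux is to certify $\gamma(\mathcal{A}_i)\ge 2$, i.e.\ that $\mathcal{A}_i$ admits no embedding on the torus. Since $\mathcal{A}_i$ has bounded vertex degrees it has only finitely many rotation systems, and for each of them the number of faces is recovered by tracing the facial walks; a genus-one embedding would need exactly $|E(\mathcal{A}_i)|-|V(\mathcal{A}_i)|$ faces, so one shows this count is never attained. The case analysis is cut down by working modulo $\mathrm{Aut}(\mathcal{A}_i)$ and by first fixing the rotations along a subdivision of $K_5$ or $K_{3,3}$ that must sit inside the non-planar graph $\mathcal{A}_i$; alternatively one appeals to a practical toroidality-testing procedure. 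It then remains to exhibit, for every edge $e$, explicit toroidal embeddings of $\mathcal{A}_i-e$ and of the contraction $\mathcal{A}_i/e$, and here one needs only one representative $e$ per orbit of $E(\mathcal{A}_i)$ under $\mathrm{Aut}(\mathcal{A}_i)$. Together these steps establish that $\mathcal{A}_i$ is a minor-order obstruction.

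For the completeness half, let $\mathcal{A}$ be an arbitrary torus obstruction on eight vertices. By Theorem~\ref{genus 100}, $K_7$ is toroidal, hence so is every graph on at most seven vertices; thus eight is already the smallest order in which a torus obstruction can occur, and there is nothing smaller to rule out. From minimum degree $\ge 3$ we get $|E(\mathcal{A})|\ge 12$, while $\mathcal{A}-e$ being toroidal forces $|E(\mathcal{A})|-1\le 3\cdot 8=24$, since a simple graph on $n\ge 3$ vertices that embeds on the torus has at most $3n$ edges; hence $12\le |E(\mathcal{A})|\le 25$. Moreover $\mathcal{A}$ is connected: every component of an obstruction has minimum degree $\ge 3$ and hence at least four vertices, so a disconnected $8$-vertex obstruction would have to be $K_4\cup K_4$, which is planar. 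One then searches exhaustively through the connected $8$-vertex graphs with minimum degree $\ge 3$ and between $12$ and $25$ edges, discarding a candidate as soon as it is found toroidal or as soon as some single-edge deletion from it is non-toroidal, and using automorphism-based pruning to keep the enumeration feasible; the surviving graphs are exactly $\mathcal{A}_1$, $\mathcal{A}_2$, $\mathcal{A}_3$.

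The main obstacle is that this is an intrinsically computational statement: there is no short structural reason the answer is three rather than some other number, so both the non-embeddability certificates in the verification half and the exhaustion in the completeness half reduce to a finite but sizeable case analysis, which in practice requires a reliable torus-embedding algorithm together with careful pruning of the space of $8$-vertex graphs --- which is exactly why the result is the computer-assisted theorem of Neufeld and Myrvold cited here.
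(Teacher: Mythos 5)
This statement is imported into the paper as an external, computer-assisted result of Neufeld and Myrvold \cite{pra}; the paper supplies no proof of it, so there is no internal argument to measure your attempt against. With that caveat, your outline is a faithful reconstruction of how the cited result is actually established, and the preliminary reductions you make are all correct: $K_7$ is toroidal by Theorem~\ref{genus 100}, so every graph on at most seven vertices embeds on the torus and eight is indeed the first order at which an obstruction can appear; the bounds $12\le|E(\mathcal{A})|\le 25$ follow from minimum degree three and from the fact that a toroidal simple graph on $n\ge 3$ vertices has at most $3n$ edges; and connectivity holds because a disconnected eight-vertex candidate would have two components of four vertices each with minimum degree three, forcing $K_4\cup K_4$, which is planar. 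The verification/completeness split and the use of rotation systems, Euler's formula, and automorphism pruning are the standard machinery.

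The one thing to be honest about is that what you have written is a plan for a computation rather than a proof: the non-embeddability of each $\mathcal{A}_i$ on the torus, the toroidality of every $\mathcal{A}_i-e$ and $\mathcal{A}_i/e$, and above all the exhaustion over the connected eight-vertex candidates are all deferred to an unexecuted search, and the answer ``exactly three'' emerges only from running it. You acknowledge this yourself in your final paragraph, and it is inherent to the theorem, but it means your text certifies nothing on its own. In the context of this paper the correct move is exactly what the authors do: state the result, display the three graphs in Figure~\ref{fig:8vertex}, and cite \cite{pra} for the computation.
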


\begin{figure}[ ht ]
 \begin{center}
 \includegraphics[scale=.7]{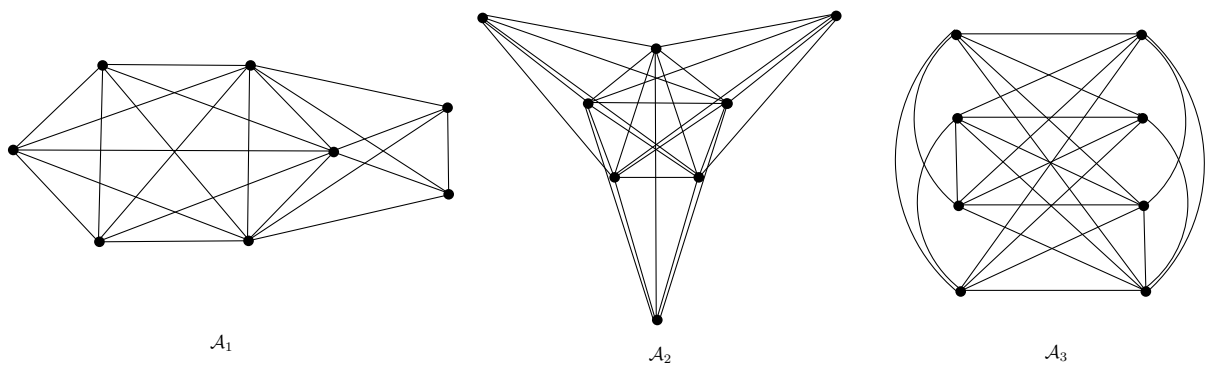}
 \caption{The eight-vertex obstructions for the torus.\label{fig:8vertex}}
 \end{center}
 \end{figure}

%%%%%%%%%%%%%%%

Gagarin \emph{et al.}~\cite{and} have found all the toroidal obstructions for the graphs containing no subdivisions of $K_{3,3}$ as a subgraph. These graphs coincide with the graphs containing no $K_{3,3}$-minors and are called \emph{with no $K_{3,3}$'s}.

\begin{thm}(\cite{and}) \label{genus 500}
There are exactly four minor-order obstructions with no $K_{3,3}$'s for the torus, precisely,
$\mathcal{B}_1,\mathcal{B}_2,\mathcal{B}_3,\mathcal{B}_4$ shown in Figure~\ref{fig:noK33}.
\end{thm}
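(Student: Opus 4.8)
\textbf{Proof plan for Theorem~\ref{genus 500}.}
The plan is to derive the four obstructions from Wagner's structural characterization of graphs with no $K_{3,3}$-minor, combined with the known additivity behaviour of the orientable genus under low-order clique-sums, and to obtain completeness from a finite case analysis (or an isomorph-free exhaustive search) once the structure theorem bounds the size of a minimal obstruction. First I would record the properties forced on a minor-order toroidal obstruction $G$ with no $K_{3,3}$-minor: it has minimum vertex degree at least $3$, orientable genus $\gamma(G)\ge 2$, and $\gamma(G-e)\le 1$ and $\gamma(G/e)\le 1$ for every edge $e$. Then I would invoke Wagner's theorem: a graph has no $K_{3,3}$-minor if and only if it can be built from planar graphs and copies of $K_5$ by repeated clique-sums over cliques of order at most $2$; equivalently, its $3$-connected ``pieces'' are $3$-connected planar graphs and $K_5$'s. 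An immediate consequence is that $G$ cannot be $3$-connected: a $3$-connected $K_{3,3}$-minor-free graph is either planar ($\gamma=0$) or $K_5$ ($\gamma=1$), neither of which is non-toroidal. Hence $G$ is disconnected or has a vertex cut of order at most $2$, and the proof would split along these possibilities.

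If $G$ is disconnected or has a cut vertex, I would use additivity of the orientable genus over blocks (the Battle--Harary--Kodama--Youngs theorem): $\gamma(G)=\sum_{B}\gamma(B)$ summed over the blocks $B$ of $G$. Each block is $2$-connected and $K_{3,3}$-minor-free, hence planar unless it has a $K_5$-minor; and by Kuratowski--Wagner the only minor-minimal non-planar $K_{3,3}$-minor-free graph is $K_5$. Since $\gamma(G)\ge 2$, minimality of $G$ then forces $G$ to consist of exactly two genus-one pieces, each deletable and contractible down to $K_5$, sharing either no vertex or a single cut vertex; this yields the two ``decomposable'' obstructions, namely $2K_5$ and the one-point amalgamation of two copies of $K_5$.

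The substantive case is when $G$ is $2$-connected but not $3$-connected, so $G$ is a genuine $2$-clique-sum $G_1\oplus_2 G_2$ across a $2$-cut $\{x,y\}$. Here I would use the genus behaviour of $2$-amalgamations: writing $\hat G_i$ for $G_i$ with the virtual edge $xy$ added, one has $\gamma(G)\le\gamma(\hat G_1)+\gamma(\hat G_2)$, together with a lower bound showing that reaching $\gamma(G)\ge 2$ from $K_{3,3}$-minor-free pieces forces both $\hat G_1$ and $\hat G_2$ to be non-planar, hence each to contain a $K_5$-minor. Minimality of $G$ then collapses each side to a $K_5$ carrying the virtual edge, and a short analysis of the ways two $K_5$'s can be $2$-amalgamated across $\{x,y\}$ — with the connecting edge present or absent, and with the recursion on any further internal $2$-cuts terminating by minimality — pins down the remaining two obstructions. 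Finally, for each of the four candidates $\mathcal B_1,\dots,\mathcal B_4$ one verifies directly (by a rotation-system/Euler-formula argument, or a toroidality test) that the graph is non-toroidal while every single-edge deletion and every single-edge contraction is toroidal, so the list is complete and each entry is genuinely minor-order.

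The main obstacle is the $2$-connected case: making the genus bound for $2$-clique-sums precise enough to conclude that two handles cannot both be ``saved'' in the amalgamation — the genus of an edge-amalgamation is not always the sum of the genera of the parts — and controlling the recursion when $G_1$ or $G_2$ themselves contain internal $2$-cuts, so that minimality really does bound $|V(G)|$ and $|E(G)|$ and leaves only finitely many graphs to inspect. If one prefers the computational route, the obstacle is equivalent: turning Wagner's decomposition into an explicit size bound for a minimal obstruction, so that an isomorph-free exhaustive search over all $K_{3,3}$-minor-free graphs up to that size is provably complete.
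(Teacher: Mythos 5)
First, a point of order: the paper does not prove this statement at all --- Theorem~\ref{genus 500} is imported verbatim from the cited source \cite{and}, so there is no in-paper argument to compare yours against. Judged on its own terms, your framework is the right one and is indeed the one used in \cite{and} and its precursors: decompose a $K_{3,3}$-minor-free graph into $3$-connected pieces that are planar or $K_5$ (Wagner), rule out $3$-connected obstructions, and use additivity of genus over blocks to handle the disconnected and cut-vertex cases. That part of your plan is sound and correctly produces $2K_5$ and the one-point amalgamation of two $K_5$'s as two of the four obstructions.

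The genuine gap is in your $2$-connected case, and it is not a technicality to be checked later --- it is fatal to the case analysis as stated. First, an internal inconsistency: your two proposed candidates, the $2$-amalgamation of two $K_5$'s across $\{x,y\}$ ``with the connecting edge present or absent,'' differ by the deletion of the single edge $xy$, so one is a proper subgraph of the other; they cannot both be minor-order obstructions, and hence this analysis cannot ``pin down the remaining two obstructions.'' Second, and more fundamentally, the premise that minimality collapses each side of the $2$-cut to a single $K_5$ is false, because the $2$-amalgamation of two $K_5$'s over a pair of vertices is \emph{toroidal}. Concretely: $K_5-xy$ is a planar triangulation with faces $xuv$, $xvw$, $xwu$, $yuv$, $yvw$, $ywu$; draw the second triangle $u'v'w'$ inside the face $xuv$ and join $u',v',w'$ to $x$ there, then attach one handle from the interior of the triangle $u'v'w'$ to the face $yuv$ and route the three edges $u'y$, $v'y$, $w'y$ through it. This embeds the whole $2$-amalgamation in the torus. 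So the handle \emph{is} shared across the $2$-cut --- exactly the possibility you flagged as the ``main obstacle'' --- and consequently a minor-order obstruction that is $2$-connected cannot reduce to two $K_5$-pieces: it must retain at least three non-planar pieces (or two non-planar pieces separated in the decomposition tree by essential planar structure). Your proposed case analysis would therefore return at most one candidate where two are needed, and would miss the actual obstructions. Closing the gap requires precisely the finer structural result of Gagarin, Labelle and Leroux characterizing \emph{which} arrangements of $K_5$-components across $2$-cuts remain toroidal; that characterization, not a count of $K_5$'s on the two sides of a single $2$-cut, is what drives the completeness argument in \cite{and}.
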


\begin{figure}[ ht ]
 \begin{center}
 \includegraphics[scale=.7]{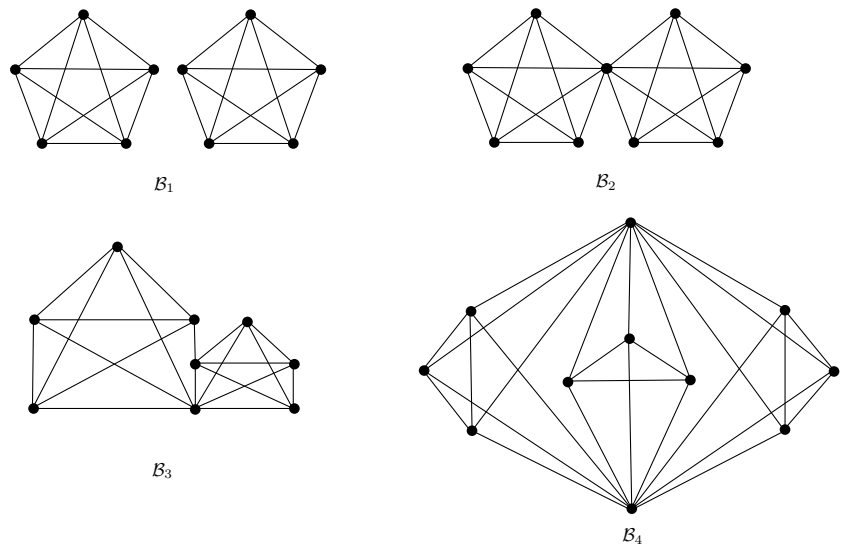}
 \caption{The minor-order obstructions with no $K_{3,3}$'s for the torus.\label{fig:noK33}}
 \end{center}
 \end{figure}

Notice that all the obstructions in Theorems~\ref{genus 501} and \ref{genus 500} are obstructions for toroidal graphs in general, which are very numerous (e.g., see \cite{and}). In other words, for example, the obstructions with no $K_{3,3}$'s of Theorem~\ref{genus 500} can be minors or subgraphs in non-toroidal graphs containing $K_{3,3}$ as a subgraph as well.

%%%%%%%%%%%%%%%%

Also, we remind here some notions and terminology of group theory.
For any integer $n\geq 3$, the  Dihedral group of order $2n$ is given by $D_{2n}=\langle a, b~|~a^n=b^2=1, ab=ba^{-1}\rangle$.
For any integer $n \geq 2$, the generalized Quaternion group of order $2^n$ is given by
$Q_{2^n} = \big < a, b ~|~a^{2^{n-1}} = b^4 = 1, a^{2^{n-2}} = b^2 = 1,
bab^{-1} = a^{-1}\big >$. For any $\alpha\geq 3$ and a prime $p$, the Modular group of order $p^\alpha$ is given by
$M_{p^\alpha}=\langle a,b~|~a^{p^{\alpha-1}}=b^p=1, bab^{-1}=a^{p^{\alpha-2}+1}\rangle$. $S_n$ and $A_n$ are symmetric and alternating groups of degree $n$, respectively.
We denote by $\text{ord}_n(a)$ the order of an element $a \in \mathbb Z_n$. The number of Sylow $p$-subgroups of a group $G$
is denoted by $n_p(G)$. Recall that $SL_m (n)$ is the group of $m \times m$ matrices having determinant equal to $1$,
whose entries lie in a field with $n$ elements, and that $L_m (n) = SL_m (n)/H$,
where $H = \{kI|k^m = 1\}$. For any prime $q\geq 3$, the  Suzuki group is denoted by $Sz(2^{q})$.

%%%%%%%%%%%%%%%%%%%%%%

\section{Finite abelian groups}\label{sec:3}
In this section, we classify the finite abelian groups whose permutability graph of subgroups is either toroidal or projective.

Note that the only groups having no proper subgroups are the trivial group and the groups of prime order.
This implies that the graph $\Gamma (G)$ is defined only when the group $G$ is not isomorphic to the trivial group or a group of prime order.

First we recall the following basic result.
\begin{lemma}(\cite[Lemma 3.1]{raj})\label{l1}
 If $G$ is a finite abelian group, then $\Gamma(G) \cong K_r$, where $r$ is the number of proper subgroups of $G$.
\end{lemma}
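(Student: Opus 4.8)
Proof plan for Lemma 2.1 ($\Gamma(G)\cong K_r$ for finite abelian $G$**).**

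The plan is to show that in an abelian group every pair of subgroups permutes, so that $\Gamma(G)$, being a graph on the $r$ proper subgroups of $G$ with an edge between any two permuting ones, is automatically complete. Recall that two subgroups $H,K\le G$ are said to \emph{permute} if $HK=KH$, and that $HK=KH$ holds if and only if the set $HK$ is itself a subgroup of $G$. So the whole statement reduces to the single classical fact: if $G$ is abelian and $H,K\le G$, then $HK$ is a subgroup of $G$.

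First I would verify this fact directly. Take $h_1k_1,\ h_2k_2\in HK$ with $h_i\in H$, $k_i\in K$. Using commutativity of $G$ we get $(h_1k_1)(h_2k_2)^{-1}=h_1k_1k_2^{-1}h_2^{-1}=(h_1h_2^{-1})(k_1k_2^{-1})\in HK$, since $h_1h_2^{-1}\in H$ and $k_1k_2^{-1}\in K$; together with $HK\ni 1$ this shows $HK\le G$ by the subgroup criterion. Hence $HK=KH$, i.e. $H$ and $K$ permute, for every pair $H,K$ of subgroups — in particular for every pair of \emph{proper} subgroups, which are the vertices of $\Gamma(G)$.

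It then only remains to assemble the conclusion. Let $r$ denote the number of proper subgroups of $G$; as noted in the text, $\Gamma(G)$ is defined precisely when $G$ is neither trivial nor of prime order, so $r\ge 1$ (and in fact the statement is vacuous/trivial in the degenerate cases). By the previous paragraph any two distinct vertices of $\Gamma(G)$ are adjacent, so $\Gamma(G)$ is the complete graph on its $r$ vertices, i.e. $\Gamma(G)\cong K_r$, as claimed. I do not expect any real obstacle here: the only point requiring the abelian hypothesis is the closure of $HK$ under products and inverses, which is immediate, and the rest is the definition of $K_r$.
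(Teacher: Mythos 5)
Your argument is correct: in an abelian group every element of $HK$ equals the corresponding element of $KH$, so any two subgroups permute and $\Gamma(G)$ is complete on its $r$ proper subgroups. The paper itself cites this lemma from an earlier reference without reproving it, and your proof is exactly the standard one that reference relies on (indeed, you could shorten it by noting $hk=kh$ gives $HK=KH$ as sets directly, without invoking the subgroup criterion).
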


\begin{pro}\label{genus 2}
Let $G$ be a finite abelian group, and $p,q,r$ are distinct primes. Then
\begin{enumerate}[\normalfont (1)]
\item $\Gamma (G)$ is toroidal if and only if $G$ is isomorphic to one of the following groups:
$\mathbb Z_{p^\alpha} (\alpha=6,7,8)$, $\mathbb Z_{p^3 q}$, $\mathbb Z_{p^2q^2}$, $\mathbb Z_{pqr}$, $\mathbb Z_4\times \mathbb Z_2$, $\mathbb Z_5\times \mathbb Z_5$;
\item $\Gamma (G)$ is projective-planar if and only if $G$ is isomorphic to one of the following groups:
$\mathbb Z_{p^\alpha} (\alpha=6,7)$, $\mathbb Z_{p^3 q}$, $\mathbb Z_{pqr}$, $\mathbb Z_4\times \mathbb Z_2$, $\mathbb Z_5\times \mathbb Z_5$.
\end{enumerate}
\end{pro}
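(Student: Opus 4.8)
The idea is to reduce the statement, via Lemma~\ref{l1}, to a purely arithmetical question about the number of subgroups of $G$, and then to settle that question by an elementary enumeration of finite abelian groups.

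By Lemma~\ref{l1}, $\Gamma(G)\cong K_r$, where $r$ is the number of proper subgroups of $G$; write $s=s(G)=r+2$ for the total number of subgroups of $G$ (including $\{e\}$ and $G$). By Theorem~\ref{genus 100}(1) the formula for $\gamma(K_r)$ shows that $\gamma(K_r)=1$ precisely for $r\in\{5,6,7\}$ (it is $0$ for $r\le 4$ and at least $2$ for $r\ge 8$), so $\Gamma(G)$ is toroidal if and only if $s\in\{7,8,9\}$. Likewise, by Theorem~\ref{genus 100}(2), $\overline{\gamma}(K_r)=1$ precisely for $r\in\{5,6\}$ (it is $0$ for $r\le 4$, equals $3$ for $r=7$, and is at least $3$ for $r\ge 7$), so $\Gamma(G)$ is projective-planar if and only if $s\in\{7,8\}$. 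Hence the whole problem reduces to listing the finite abelian groups with exactly $7$, $8$, or $9$ subgroups; those with exactly $9$ subgroups will be precisely the ones appearing in part~(1) but not in part~(2).

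To carry out the enumeration I would use the primary decomposition $G\cong\prod_{p\mid|G|}G_p$ into Sylow subgroups. Since the subgroups of a direct product of groups of pairwise coprime orders are exactly the products of subgroups of the factors, $s(G)=\prod_p s(G_p)$, where each factor is at least $2$. The next ingredient is a short catalogue of small values of $s$ for abelian $p$-groups: $s(\mathbb Z_{p^k})=k+1$ and $s(\mathbb Z_p\times\mathbb Z_p)=p+3$, while every other noncyclic abelian $p$-group contains a copy of $\mathbb Z_{p^2}\times\mathbb Z_p$ (if its rank is $2$) or of $\mathbb Z_p\times\mathbb Z_p\times\mathbb Z_p$ (if its rank is at least $3$); as $s$ is monotone under passing to subgroups, and $s(\mathbb Z_{p^2}\times\mathbb Z_p)=2p+4$, $s(\mathbb Z_p\times\mathbb Z_p\times\mathbb Z_p)\ge16$, such a group has at least $8$ subgroups. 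Consequently the only noncyclic abelian $p$-groups with at most $9$ subgroups are $\mathbb Z_2\times\mathbb Z_2$, $\mathbb Z_3\times\mathbb Z_3$, $\mathbb Z_4\times\mathbb Z_2$, $\mathbb Z_5\times\mathbb Z_5$, having $5$, $6$, $8$, $8$ subgroups respectively; in particular no noncyclic abelian $p$-group has exactly $7$ or exactly $9$ subgroups.

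Finally I would solve $\prod_p s(G_p)\in\{7,8,9\}$ by splitting on the number of primes dividing $|G|$. Since $7$ is prime, a product equal to $7$ forces a single prime with $s(G_p)=7$, i.e.\ $G\cong\mathbb Z_{p^6}$. The factorizations of $8$ into factors $\ge2$ are $8$ and $2\cdot4$ and $2\cdot2\cdot2$, giving respectively one prime with $s(G_p)=8$ (so $G\cong\mathbb Z_{p^7}$, $\mathbb Z_4\times\mathbb Z_2$, or $\mathbb Z_5\times\mathbb Z_5$), two primes with $\{s(G_p),s(G_q)\}=\{2,4\}$ (so $G\cong\mathbb Z_q\times\mathbb Z_{p^3}\cong\mathbb Z_{p^3q}$, because the only abelian $p$-groups with $2$ and with $4$ subgroups are $\mathbb Z_p$ and $\mathbb Z_{p^3}$), and three primes each contributing $2$ (so $G\cong\mathbb Z_{pqr}$). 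The factorizations of $9$ into factors $\ge2$ are $9$ and $3\cdot3$, giving one prime with $s(G_p)=9$ (so $G\cong\mathbb Z_{p^8}$) or two primes each with $3$ subgroups (so $G\cong\mathbb Z_{p^2}\times\mathbb Z_{q^2}\cong\mathbb Z_{p^2q^2}$). Keeping those with $s(G)\in\{7,8,9\}$ gives part~(1) and those with $s(G)\in\{7,8\}$ gives part~(2). The one place demanding genuine care is the $p$-group bookkeeping of the previous paragraph, where the catalogue of noncyclic abelian $p$-groups with few subgroups must be exhaustive; everything else is routine.
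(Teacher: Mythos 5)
Your reduction of the whole statement to the single invariant $s(G)$ (the total number of subgroups), via $\Gamma(G)\cong K_r$ and the genus formulas, is correct, and the multiplicativity $s(G)=\prod_p s(G_p)$ over the primary decomposition together with the factorizations of $7$, $8$ and $9$ is a cleaner organization than the paper's, which after the same divisor count in the cyclic case runs a structural case analysis on noncyclic groups and exhibits an explicit $K_8$ or $K_{p+1}$ in each bad case. However, your catalogue of noncyclic abelian $p$-groups with at most $9$ subgroups --- the step you yourself flag as the one needing care --- has a genuine hole at $p=2$. For a rank-$2$ group other than $\mathbb Z_p\times\mathbb Z_p$ you argue only that it contains $\mathbb Z_{p^2}\times\mathbb Z_p$ and hence has at least $2p+4$ subgroups. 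For $p\ge 3$ this exceeds $9$ and the group is excluded, but for $p=2$ the bound is exactly $8$, so it does not exclude $\mathbb Z_8\times\mathbb Z_2$, $\mathbb Z_4\times\mathbb Z_4$, $\mathbb Z_{16}\times\mathbb Z_2$, \dots\ from having exactly $8$ or $9$ subgroups; if any of them did, it would be toroidal and would have to appear in part (1). The sentence ``consequently the only noncyclic abelian $p$-groups with at most $9$ subgroups are \dots'' is therefore a non sequitur as written: ``at least $8$'' does not yield ``more than $9$''. (The paper's own Subcases 2b--2d have the same blind spot, treating $\mathbb Z_{p^2}\times\mathbb Z_p$ and $\mathbb Z_{p^k}\times\mathbb Z_{p^l}$ with $k,l\ge 2$ but not $\mathbb Z_{p^k}\times\mathbb Z_p$ with $k\ge 3$, so this really is the delicate corner of the argument.)

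The gap is easy to close. Every abelian $2$-group of rank $2$ and order at least $16$ contains a proper subgroup $H\cong\mathbb Z_4\times\mathbb Z_2$, which already accounts for $8$ subgroups; it also contains a subgroup not contained in any copy of $\mathbb Z_4\times\mathbb Z_2$, namely a cyclic subgroup of order $8$ when the larger invariant factor is at least $2^3$ (a group of exponent $4$ contains no $\mathbb Z_8$), and in the one remaining case $\mathbb Z_4\times\mathbb Z_4$ the subgroup $0\times\mathbb Z_4$, which is not contained in $\mathbb Z_4\times 2\mathbb Z_4$. Together with the whole group this forces $s\ge 10$. (Alternatively, a direct count gives $s(\mathbb Z_8\times\mathbb Z_2)=11$ and $s(\mathbb Z_4\times\mathbb Z_4)=15$, and monotonicity of $s$ under subgroups disposes of all larger rank-$2$ $2$-groups.) With this one addition your catalogue is exhaustive, the rest of your enumeration checks out, and the proof is complete.
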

\begin{proof}
 We break the proof into two cases:

\noindent \textbf{Case 1:} Suppose $G$ is cyclic, and $|G|=p_1^{\alpha_ 1}p_2^{\alpha_ 2}\dots p_k^{\alpha_ k}$, where $p_i$'s are distinct primes, $\alpha_i \geq 1$ are integers, $i=1,\ldots,k$. Then the number of distinct subgroups of $G$ is the number of distinct positive divisors of $|G|$.
Thus, by Lemma ~\ref{l1},  we have
\begin{equation}\label{e1}
 \Gamma(G) \cong K_r,
\end{equation}
where $r= (\alpha_{1} + 1)(\alpha_{2} + 1) \cdots (\alpha_{k} +1)- 2$.

This implies $\Gamma(G)$ is toroidal if and only if
$r=5$, 6, 7. This is true when one of the following holds:
\begin{itemize}
\item [\textit{(i)}] $k=1$ with $\alpha_1=6$, $7$, $8$;

\item [\textit{(ii)}] $k=2$ with $\alpha_1=3,\alpha_2=1$;

\item [\textit{(iii)}] $k=2$ with $\alpha_1=\alpha_2= 2$;

\item [\textit{(iv)}]$k=3$ with $\alpha_1=\alpha_2=\alpha_3=1$.
\end{itemize}
Thus, for toroidal $\Gamma(G)$, $G$ is isomorphic to one of $\mathbb Z_{p^\alpha} (\alpha=6$, 7, $8)$, $\mathbb Z_{p^3 q}$, $\mathbb Z_{p^2q^2}$, $\mathbb Z_{pqr}$.

Respectively, $\Gamma(G)$ is projective-planar if and only if
$r=5$, 6. This is true when one of the following holds:
\begin{itemize}
\item [\textit{(i)}] $k=1$ with $\alpha_1=6$, $7$;

\item [\textit{(ii)}] $k=2$ with $\alpha_1=3,\alpha_2=1$;

\item [\textit{(iii)}]$k=3$ with $\alpha_1=\alpha_2=\alpha_3=1$.
\end{itemize}
Thus, for projective-planar $\Gamma(G)$, $G$ is isomorphic to one of $\mathbb Z_{p^\alpha} (\alpha=6$, $7)$, $\mathbb Z_{p^3 q}$, $\mathbb Z_{pqr}$.

\noindent \textbf{Case 2:} Suppose $G$ is non-cyclic. Then we split this case into the following subcases:

\noindent \textbf{Subcase 2a:} $G\cong \mathbb Z_p\times \mathbb Z_p$. Then the number of proper subgroups of $\mathbb Z_p\times \mathbb Z_p$ is $p+1$;
they are $\langle (1,0) \rangle$, and $\{\langle (x,1)\rangle ~|~ x \in \{0,1,\ldots ,p-1\} \}$. By Lemma~\ref{l1}, we have
\begin{equation}\label{e333}
\Gamma(G)\cong K_{p+1}.
\end{equation}
 It follows that $\Gamma(G)$ is toroidal or projective-planar only when $p= 5$.

\noindent \textbf{Subcase 2b:} $G\cong \mathbb Z_{p^2}\times \mathbb Z_p$. If $p=2$, then $\langle (1,0)\rangle$, $\langle (1,1)\rangle$, $\langle (2,0)\rangle$,
$\langle (0,1)\rangle$, $\langle (2,1)\rangle$ and $\langle (2,0),(0,1)\rangle$ are the only proper subgroups of $G$. Therefore,
\begin{equation}\label{e6}
 \Gamma(G)\cong K_6
\end{equation}
and so $\Gamma(G)$ is toroidal and projective-planar. If $p\geq3$, then the proper subgroups $\langle (1,0)\rangle$, $\langle (1,1)\rangle$, $\langle (1,p-1)\rangle$, $\langle (p,0)\rangle$,
$\langle (p,1)\rangle$, $\langle (p,p-1)\rangle$, $\langle (p,0),(0,1)\rangle$ and $\langle (0,1)\rangle$ of $G$ form
$K_8$ as a subgraph of $\Gamma(G)$, implying $\gamma(\Gamma(G))>1$, $\overline{\gamma}(\Gamma(G))>1$.

\noindent \textbf{Subcase 2c:} $G\cong \mathbb Z_{p^2}\times \mathbb Z_{p^2}$. If $p=2$, then $G$ has two subgroups $H:=\mathbb Z_{p^2}\times \mathbb Z_p$, $N:=\mathbb Z_{p^2}$.
Then, by Subcase 2b, $H$ together with its subgroups and $N$ form $K_8$ as a subgraph of $\Gamma(G)$. Therefore, $\gamma(\Gamma(G))>1$, $\overline{\gamma}(\Gamma(G))>1$.
%the proper subgroups $\langle (1,0)\rangle$, $\langle (2,1)\rangle$,
%$\langle (0,2)\rangle$, $\langle (1,1)\rangle$, $\langle (2,2)\rangle$, $\langle (2,0)\rangle$, $\langle (1,2)\rangle$ and $\langle (0,1)\rangle$
%of $G$ form $K_8$ as a subgraph of $\Gamma(G)$. Therefore $\gamma(\Gamma(G))>1$.

If $p>2$, then we consider the subgroup
$\mathbb Z_{p^2}\times \mathbb Z_p$ of $G$. By Subcase 2b, in this case, $\gamma(\Gamma(\mathbb Z_{p^2}\times \mathbb Z_p))>1$, $\overline{\gamma}(\Gamma(\mathbb Z_{p^2}\times \mathbb Z_p))>1$, and so $\gamma(\Gamma(G))>1$, $\overline{\gamma}(\Gamma(G))>1$.

\noindent \textbf{Subcase 2d:} $G\cong \mathbb Z_{p^k}\times \mathbb Z_{p^l}$, $k$, $l\geq2$. Then $\mathbb Z_{p^2}\times \mathbb Z_{p^2}$ is a
subgroup of $G$. By Subcase 2c, $\gamma(\Gamma(\mathbb Z_{p^2}\times \mathbb Z_{p^2}))>1$, $\overline{\gamma}(\Gamma(\mathbb Z_{p^2}\times \mathbb Z_{p^2}))>1$ implying
$\gamma(\Gamma(G))>1$, $\overline{\gamma}(\Gamma(G))>1$.

\noindent \textbf{Subcase 2e:} $G\cong \mathbb Z_p\times \mathbb Z_{pq}$. Then
$H:=\mathbb Z_p\times \mathbb Z_p$, $H_1:=\mathbb Z_q\times \{e\}$ are subgroups of $G$,
and $H$ has at least three subgroups of order $p$, say $H_2$, $H_3$, $H_4$. Now $H_5:=H_2H_1$, $H_6:=H_3H_1$, and $H_7:=H_4H_1$ are subgroups of $G$.
So, $H$ and $H_i$, $i=1$, 2, $\ldots,8$, form $K_8$ as a subgraph of $\Gamma(G)$, and so $\gamma(\Gamma(G))>1$, $\overline{\gamma}(\Gamma(G))>1$.

%Then the proper subgroups $\langle (1,0)\rangle$, $\langle (0,1)\rangle$, $\langle (1,p)\rangle$,
%$\langle (1,q)\rangle$, $\langle (q,q)\rangle$, $\langle (1,1)\rangle$, $\langle (1,0),(0,q)\rangle$ and $\langle (0,q)\rangle$ of $G$
%form $K_8$

\noindent \textbf{Subcase 2f:} $G\cong \mathbb Z_p\times \mathbb Z_p\times \mathbb Z_p$. Then the proper subgroups
$\langle (1,0,0)\rangle$, $\langle (0,1,0)\rangle$, $\langle (0,0,1)\rangle$,
$\langle (1,1,0)\rangle$, $\langle (1,0,1)\rangle$, $\langle (0,1,1)\rangle$, $\langle (0,1,0),(0,0,1)\rangle$ and $\langle (1,0,0),(0,0,1)\rangle$
of $G$ form $K_8$ as a subgraph of $\Gamma(G)$, and so $\gamma(\Gamma(G))>1$, $\overline{\gamma}(\Gamma(G))>1$.

\noindent \textbf{Subcase 2g:} $G\cong \mathbb Z_{p_1^{\alpha_1}}\times \mathbb Z_{p_2^{\alpha_2}}\times \ldots \times\mathbb Z_{p_k^{\alpha_k}}$, where $k\geq 3$, $p_i$'s are primes and at least two of them are equal (since $G$ is non-cyclic, all the primes cannot be distinct here), $\alpha_i\geq 1$ are integers, $i=1,\ldots,k$.
Then, for some $i$ and $j$, $i\neq j$, $G$ has one of  $\mathbb Z_{p_i}\times \mathbb Z_{p_ip_j}$ or $\mathbb Z_{p_i}\times \mathbb Z_{p_i}\times \mathbb Z_{p_i}$ as a
subgroup.
By Subcases 2e, 2f above, the permutability graphs of subgroups of these groups are not toroidal or projective-planar. Thus, it follows that $\gamma(\Gamma(G))>1$, $\overline{\gamma}(\Gamma(G))>1$.

Combining all the above cases together completes the proof.
\end{proof}

%%%%%%%%%%%%%%%%%%%%%%

\section{Finite non-abelian groups}\label{sec:4}
In this section, we classify the finite non-abelian groups whose permutability graph of subgroups is toroidal or projective-planar.
We first consider the solvable  groups, and then we investigate the non-solvable groups.

\subsection{Solvable groups}
\begin{pro}\label{genus 5}
 Let $G$ be a non-abelian group of order $p^\alpha$, where $p$ is a prime and $\alpha\geq3$. Then $\gamma(\Gamma(G))> 1$, $\overline{\gamma}(\Gamma(G))>1$.
\end{pro}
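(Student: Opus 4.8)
The plan is to show that any non-abelian group $G$ of order $p^\alpha$ with $\alpha \ge 3$ has a permutability graph $\Gamma(G)$ containing $K_8$ as a subgraph, which by Theorem~\ref{genus 100} forces $\gamma(\Gamma(G)) > 1$ and $\overline{\gamma}(\Gamma(G)) > 1$. The key structural fact to exploit is that in a finite $p$-group, \emph{all} subgroups are subnormal, and in fact any two subgroups of a $p$-group whose product has order dividing the group order tend to permute; more concretely, a finite $p$-group is nilpotent, so every maximal subgroup is normal, and every subgroup is contained in a subnormal series with abelian factors. The cleanest route is to locate eight proper subgroups that pairwise permute — for instance, by first producing a large abelian section (recall from Lemma~\ref{l1} that abelian groups give complete permutability graphs) together with enough additional permuting subgroups.

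First I would handle the base case $\alpha = 3$ directly. A non-abelian group of order $p^3$ is, up to isomorphism, one of two groups (for $p$ odd: the Heisenberg group and the modular group $M_{p^3}$; for $p = 2$: $D_8$ and $Q_8$). Each such $G$ has a center $Z(G)$ of order $p$, and $G/Z(G) \cong \mathbb{Z}_p \times \mathbb{Z}_p$, which has $p+1 \ge 3$ subgroups of order $p$; pulling these back gives $p+1$ subgroups of order $p^2$, all normal (they are maximal), hence permuting with everything. Together with $Z(G)$, the $p+1$ subgroups of order $p$ inside a chosen maximal subgroup, and a couple more subgroups of order $p$, one assembles at least $8$ proper subgroups that pairwise permute when $p \ge 3$. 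For $p = 2$, $D_8$ and $Q_8$ need a slightly more careful hand count: $Q_8$ has exactly $6$ proper subgroups, all of which pairwise permute (it is Hamiltonian — every subgroup normal), so $\Gamma(Q_8) \cong K_6$, which is toroidal, \emph{not} of genus $> 1$; this means the base case $\alpha=3$, $p=2$ must instead be resolved by noting that one should not start the induction at $\alpha = 3$ for $p=2$. Let me reconsider: the statement is about \emph{all} $\alpha \ge 3$, so for $p = 2$ I would start from $\alpha = 4$ and check $\alpha = 3$ separately — but $\Gamma(Q_8) = K_6$ is not genus $>1$! So either the proposition implicitly excludes $Q_8$, or (more likely) the intended reading leans on a list of small non-abelian $p$-groups already excluded elsewhere in the paper; I would flag this and, to be safe, prove the $K_8$-subgraph claim for every non-abelian $G$ of order $p^\alpha$ with $\alpha \ge 3$ \emph{except} $Q_8$ and $D_8$, handling those two order-$8$ groups by explicit inspection.

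Next, for the inductive step, suppose $|G| = p^\alpha$ with $\alpha \ge 4$. Since $G$ is nilpotent of class $\ge 2$, pick a maximal subgroup $M$ (so $|M| = p^{\alpha-1} \ge p^3$ and $M \trianglelefteq G$). If $M$ is non-abelian, by induction $\Gamma(M) \supseteq K_8$, and since every subgroup of $M$ is a subgroup of $G$ and permutability in $M$ implies permutability in $G$, we get $\Gamma(G) \supseteq K_8$. If every maximal subgroup of $G$ is abelian, then $G$ is one of a very restricted family (a $p$-group all of whose maximal subgroups are abelian has $|G'| = p$ and class $2$); in that case I would directly exhibit $K_8$: take two distinct maximal subgroups $M_1, M_2$ (both normal, both abelian, each with many subgroups), note $Z(G) \le M_1 \cap M_2$, and count — $M_1$ alone contributes all its subgroups as a clique (Lemma~\ref{l1} applied inside $M_1$ gives $K_r$ with $r$ large since $|M_1| = p^{\alpha-1} \ge p^3$), and $M_1$ being normal, every subgroup of $M_1$ permutes with $M_2$ and with $Z(G)$, etc. Since $|M_1| \ge p^3$ has at least $\ge 4$ proper nontrivial subgroups plus the trivial and improper ones are excluded — actually an abelian group of order $p^3$ already has $\ge 4$ proper subgroups when it is $\mathbb{Z}_{p^3}$ and more otherwise — augmenting with $M_2, Z(G)$ and further normal subgroups comfortably reaches $8$ pairwise-permuting proper subgroups.

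The main obstacle I anticipate is \emph{not} the generic case but the low-order exceptions: verifying that the $K_8$ count genuinely works for the smallest non-abelian $p$-groups (order $p^3$, and order $16$ for $p=2$), where the number of proper subgroups is close to the threshold $8$ and permutability must be checked rather than assumed. In particular $Q_8$ and $D_8$ genuinely fail to contain $K_8$ (both have $\le 10$ subgroups but their permutability graphs are small), so the honest statement needs these excluded or separately absorbed into the paper's other cases; I would make this exclusion explicit. Everything else reduces, via normality of maximal subgroups in nilpotent groups and the hereditary nature of permutability under passing to subgroups, to the two routine observations that (a) a normal subgroup permutes with every subgroup, and (b) an abelian subgroup of order $\ge p^3$ supplies a clique of size $\ge 4$ in $\Gamma(G)$ by Lemma~\ref{l1}.
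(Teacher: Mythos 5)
Your central device --- locating $K_8$ in $\Gamma(G)$ --- provably cannot cover all the groups in the statement, and the cases where it fails are exactly the ones the paper has to work hardest on. $\Gamma(M_8)\cong K_4+\overline{K}_{2,2}$ has eight vertices but clique number six, and $\Gamma(Q_{16})\cong K_5+2K_2$ has nine vertices and clique number seven (five normal subgroups, and the four non-normal cyclic subgroups of order $4$ induce only a perfect matching); neither contains $K_8$. Worse, the Euler bound $E\le 3V$ is tight for $K_4+\overline{K}_{2,2}$ ($24$ edges on $8$ vertices), so no edge count rules out a toroidal embedding: the paper has to exhibit the eight-vertex toroidal obstruction $\mathcal{A}_1$ of Theorem~\ref{genus 501} inside $\Gamma(M_8)$, and a $K_{4,5}$ inside $\Gamma(Q_{2^\alpha})$ for $\alpha\ge4$, plus $K_{3,5}$ for the nonorientable statement. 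Your ``explicit inspection'' of the order-$8$ groups is precisely where the real work lies, and your proposal supplies no certificate for it. Your induction also has no route through $Q_{16}$: its maximal subgroups are $\mathbb{Z}_8$ (whose permutability graph is $K_2$) and two copies of $Q_8$ (for which the proposition fails), so neither branch of your dichotomy ``some maximal subgroup is non-abelian / all are abelian'' yields anything. The paper avoids induction on $\alpha$ altogether here: for $\alpha\ge4$ it splits on whether $G$ has a unique subgroup of order $p$ (forcing $G\cong Q_{2^\alpha}$) and otherwise uses Burnside's counting theorem to produce three normal maximal subgroups and normal subgroups of orders $p^{\alpha-2}$ and $p$, assembling a $K_{4,5}$.

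On the credit side: you are right that $Q_8$ contradicts the proposition as literally stated --- the paper's own proof concedes $\Gamma(Q_8)\cong K_4$ is planar and simply moves on, so the statement implicitly excludes $Q_8$ --- although $Q_8$ has four proper subgroups under the paper's convention, not six, so $\Gamma(Q_8)\cong K_4$, not $K_6$. By contrast, $D_8$ must \emph{not} be excluded: the conclusion does hold for it, it just cannot be reached by a clique argument. Your $K_8$ count for the odd-$p$ Heisenberg group (the $p+1\ge4$ normal maximal subgroups together with the $p+1\ge4$ order-$p$ subgroups of one abelian maximal subgroup) is correct and is actually a cleaner certificate than the paper's $K_{3,7}$; but as a whole the proposal is missing the obstruction-based tools that the $p=2$ cases genuinely require.
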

\begin{proof}
We divide the proof into two cases.

\noindent\textbf{Case 1:} $\alpha=3$. If $p=2$, then the only non-abelian groups of order 8 are $Q_8$ and $M_8$. By Theorem~\ref{genus 19}, $\Gamma(Q_8)$
is planar, and it is shown in \cite[Theorem~4.3]{raj} that
\begin{equation}\label{e111}
 \Gamma(Q_8)=K_4.
\end{equation}
If $G\cong M_8$, then $\langle a\rangle$, $\langle a^2\rangle$, $\langle b\rangle$, $\langle ab\rangle$,
$\langle a^2b\rangle$, $\langle a^3b\rangle$, $\langle a^2,b\rangle$, $\langle a^2,ab\rangle$ are the only proper subgroups of $G$. Here
$\langle a\rangle$, $\langle a^2,b\rangle$, $\langle a^2,ab\rangle$, $\langle a^2\rangle$ are normal in $G$;
$\langle b\rangle$ permutes with $\langle a^2b\rangle$;
$\langle ab\rangle$ permutes with $\langle a^3b\rangle$; no two remaining subgroups permutes. Therefore,
\begin{equation}\label{e3}
\Gamma(G)\cong K_4+\overline{K}_{2,2}.
\end{equation}
Thus, $\Gamma(G)$ has a subgraph $\mathcal A_1$ shown in Figure~\ref{fig:8vertex} (edges $\langle a^2\rangle \langle b \rangle$ and $\langle a^2 \rangle\langle a^2b \rangle$ are removed, see Figure~\ref{fig:subgroups}),
 which is a topological obstruction for the torus, and $\gamma(\Gamma(G))> 1$. Moreover, $\Gamma(G)$ contains $K_{3,5}$ as a subgraph, so  $\overline{\gamma}(\Gamma(G))>1$.

 \begin{figure}[ ht ]
 \begin{center}
  \includegraphics[scale=1]{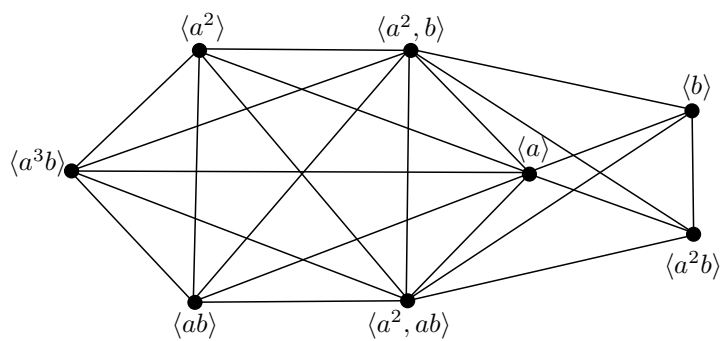}
 \caption{A topological obstruction for the torus.\label{fig:subgroups}}
 \end{center}
 \end{figure}

%  given in \cite[Figure 4, p. 578]{pra},
 %Thus $\Gamma(G)$ has a subgraph as shown in Figure 1, which is a topological obstruction fortorus given in \cite[Figure 4, p. 578]{pra}, so $\gamma(\Gamma(G))>1$.

If $p\neq2$, then we have, up to isomorphism, only two groups, namely $M_{p^3}$ and $(\mathbb Z_p\times \mathbb Z_p)\rtimes \mathbb Z_p$ of order $p^3$.
If $G\cong M_{p^3}$, then $\langle a\rangle$, $\langle ab\rangle$, $\langle ab^{p-1}\rangle$, $\langle a^p,b\rangle$,
$\langle a^p\rangle$, $\langle a^pb^{p-1}\rangle$, $\langle a^p\rangle$, $\langle b\rangle$ are proper subgroups of $G$. Since every pair of subgroups of a
modular group permutes, $K_8$ is a subgraph of $\Gamma(G)$, and so $\gamma(\Gamma(G))>1$, $\overline{\gamma}(\Gamma(G))>1$. If
$G\cong(\mathbb Z_p\times \mathbb Z_p)\rtimes \mathbb Z_p=\langle a,b,c~|~a^p=b^p=c^p=1, ab=ba, ca=ac, cbc^{-1}=ab\rangle$,
then $\mathbb Z_p\times \mathbb Z_p$ is a subgroup of $G$ and, from the proof of
Proposition~\ref{genus 2}, $\gamma(\Gamma(\mathbb Z_p\times \mathbb Z_p))>1$, $\overline{\gamma}(\Gamma(\mathbb Z_p\times \mathbb Z_p))>1$ when $p>5$.
If $p=3$ or 5, then $\langle a\rangle$, $\langle b\rangle$,
$\langle c\rangle$, $\langle a,b\rangle$, $\langle a,c\rangle$, $\langle b,c\rangle$, $\langle ab\rangle$, $\langle ab^2\rangle$, $\langle ac\rangle$,
$\langle a^2c\rangle$ are proper subgroups of $G$. Here $\langle a,b\rangle$, $\langle a,c\rangle$, $\langle b,c\rangle$ are normal in $G$.
It follows that $K_{3,7}$ is a subgraph of $\Gamma(G)$ with bipartition $X:=\{\langle a,b\rangle$,
$\langle a,c\rangle$, $\langle b,c\rangle\}$ and $Y=\{\langle a\rangle$, $\langle b\rangle$,
$\langle c\rangle$, $\langle ab\rangle$, $\langle ab^2\rangle$, $\langle ac\rangle$, $\langle a^2c\rangle\}$, and so $\gamma(\Gamma(G))>1$, $\overline{\gamma}(\Gamma(G))>1$.
% For $p=5$, we have $\langle a\rangle$, $\langle b\rangle$,
% $\langle c\rangle$, $\langle a,b\rangle$, $\langle a,c\rangle$, $\langle b,c\rangle$, $\langle ab\rangle$, $\langle ab^2\rangle$, $\langle ab^3\rangle$,
% $\langle a^4b\rangle$ are subgroups of $G$. Here $\langle a,b\rangle$, $\langle a,c\rangle$, $\langle b,c\rangle$ are normal in $G$, they
% permutes with all the subgroups of $G$. It follows that $K_{3,7}$ is a subgraph of $\Gamma(G)$ with bipartition
% $X:=\{\langle a,b\rangle$, $\langle a,c\rangle$, $\langle b,c\rangle\}$ and $Y:=\{\langle a\rangle$, $\langle b\rangle$,
% $\langle c\rangle$, $\langle ab\rangle$, $\langle ab^2\rangle$, $\langle ab^3\rangle$,
% $\langle a^4b\rangle\}$.
% Therefore, $\gamma(\Gamma(G))>1$.

\noindent\textbf{Case 2:} $\alpha\geq 4$. By \cite[Theorem IV, p.129]{burn}, $G$ has at least three subgroups, say $H_1$, $H_2$, $H_3$, of order $p^{\alpha-1}$ and at least
three subgroups, say $H_4$, $H_5$, $H_6$, of order $p^{\alpha-2}$.
If $G$ has more than one subgroup of order $p$,
then $G\ncong Q_{2^\alpha}$ by \cite[Proposition 1.3]{scott}, and, by \cite[Theorem IV, p.129]{burn},
we have at least three subgroups of order $p$, say $H_7$, $H_8$, $H_9$.
By \cite[Corollary of Theorem IV, p.129]{burn}, for each divisor of $|G|$, $G$ has at least one normal subgroup of that order.
So, without loss of generality, we assume $H_4$, $H_7$ are normal in $G$.
Since $H_1$, $H_2$, $H_3$ are also normal in $G$, $K_{5,4}$ is a subgraph of $\Gamma(G)$ with bipartition $X:=\{H_1$, $H_2$, $H_3$, $H_4$, $H_7\}$ and $Y:=\{H_5$, $H_6$, $H_8$, $H_9\}$.
Therefore, $\gamma(\Gamma(G))>1$, and, since $\Gamma(G)$ contains $K_{3,5}$, $\overline{\gamma}(\Gamma(G))>1$.

If $G$ has a unique subgroup of order $p$, then
$G\cong Q_{2^\alpha}$ by \cite[Proposition 1.3]{scott}, and so $\langle a\rangle$, $\langle a^2\rangle$, $\langle a^4\rangle$, $\langle b\rangle$, $\langle a^2,b\rangle$,
$\langle a^2,ab\rangle$, $\langle ab\rangle$, $\langle a^2b\rangle$, $\langle a^3b\rangle$ are proper subgroups of $G$. Since $\langle a\rangle$,
$\langle a^4\rangle$, $\langle a^2,b\rangle$, $\langle a^2,ab\rangle$ are normal in $G$, $K_{4,5}$ is a subgraph of $\Gamma(G)$
with bipartition $X:=\{\langle a\rangle$, $\langle a^4\rangle$, $\langle a^2,b\rangle$, $\langle a^2,ab\rangle\}$ and
$Y:=\{\langle a^2\rangle$, $\langle b\rangle$, $\langle ab\rangle$, $\langle a^2b\rangle$, $\langle a^3b\rangle\}$, and so $\gamma(\Gamma(G))>1$, $\overline{\gamma}(\Gamma(G))>1$.
\end{proof}

If $G$ is a non-abelian group of order $pq$, then, by Theorem~\ref{genus 19}, $\Gamma(G)$ is planar, and it is shown in \cite[Theorem~4.4]{raj} that
\begin{equation}\label{222}
\Gamma(G)\cong K_{1,q}.
\end{equation}

% \begin{pro}\label{genus 6}
%  Let $G$ be the  non-abelian group of order $pq$, where $p< q$ and $p,q$ are two distinct primes.
% Then by Theorem~\ref{genus 19}, $\Gamma(G)=\overline{K_{1,q}}$ is planar.
% \end{pro}

Consider the semi-direct product $\mathbb Z_q \rtimes_{t} \mathbb Z_{p^{\alpha}} = \langle a,b~|~a^q= b^{p^{\alpha}}= 1, bab^{-1}= a^i,
{ord_{q}}(i)= p^t \rangle$, where $p$ and $q$ are distinct primes with $p^t~|~(q-1)$, $t \geq 0$. Then every semi-direct product $Z_q \rtimes Z_{p^{\alpha}}$
 is  one of these types \cite[Lemma 2.12]{boh-reid}. Note that, in what follows, we omit the subscript when $t = 1$.

\begin{pro}\label{genus 7}
 Let $G$ be a non-abelian group of order ${p^2}q$, where $p$ and $q$ are distinct primes. Then
 \begin{enumerate}[\normalfont (1)]
 \item $\Gamma(G)$ is toroidal if and only if
$G$ is isomorphic to one of $\mathbb Z_3\rtimes \mathbb Z_4$, $\mathbb Z_5\rtimes \mathbb Z_4$ or $\langle a, b, c~|~ a^p=b^p=c^q=1, ab=ba, cac^{-1}=b^{-1},
cbc^{-1}= ab^{l} \rangle$, where $\bigl(\begin{smallmatrix}
  0 & -1\\ 1 & l
\end{smallmatrix} \bigr)$ has order $q$ in $GL_2(p)$, $p=3,5$;
\item $\Gamma(G)$ is projective-planar if and only if
$G$ is isomorphic to either $\mathbb Z_3\rtimes \mathbb Z_4$ or $\langle a, b, c~|~ a^3=b^3=c^q=1, ab=ba, cac^{-1}=b^{-1},
cbc^{-1}= ab^{l} \rangle$, where $\bigl(\begin{smallmatrix}
  0 & -1\\ 1 & l
\end{smallmatrix} \bigr)$ has order $q$ in $GL_2(3)$.
\end{enumerate}
\end{pro}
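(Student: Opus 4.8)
The plan is to enumerate, up to isomorphism, all non-abelian groups of order $p^2q$ and, for each family, either exhibit a subgraph of $\Gamma(G)$ forbidden on the torus (hence on the projective plane as well) or explicitly list the proper subgroups, determine which pairs permute, identify the isomorphism type of $\Gamma(G)$, and check its genus against the obstruction sets of Theorems~\ref{genus 100}, \ref{genus 501}, and \ref{genus 500}. Since $|G|=p^2q$ is solvable, $G$ has a normal Sylow subgroup; I would organize the case analysis by which Sylow subgroup is normal and by the structure of the Sylow $p$-subgroup (cyclic $\mathbb Z_{p^2}$ versus elementary abelian $\mathbb Z_p\times\mathbb Z_p$), using $n_p(G)\equiv 1\pmod p$, $n_p(G)\mid q$, $n_q(G)\equiv 1\pmod q$, $n_q(G)\mid p^2$ to pin down the possibilities. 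This yields a short list: groups of the form $\mathbb Z_q\rtimes_t\mathbb Z_{p^2}$ (including $D_{2p^2}$-type and the genuinely non-abelian $t=1,2$ cases), $\mathbb Z_{p^2}\rtimes\mathbb Z_q$, $(\mathbb Z_p\times\mathbb Z_p)\rtimes\mathbb Z_q$ with various actions, and $\mathbb Z_q\rtimes(\mathbb Z_p\times\mathbb Z_p)$; the three surviving groups in the statement will turn out to be $\mathbb Z_3\rtimes\mathbb Z_4$ ($q=3$, $p=2$), $\mathbb Z_5\rtimes\mathbb Z_4$ ($q=5$, $p=2$), and the specified $(\mathbb Z_p\times\mathbb Z_p)\rtimes\mathbb Z_q$ for $p=3,5$ with a fixed-point-free action of order $q$.

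The workhorse for the negative cases is the same device used in Propositions~\ref{genus 2} and \ref{genus 5}: locate enough normal subgroups (which are adjacent to \emph{every} vertex of $\Gamma(G)$) together with enough further subgroups to force a large complete bipartite subgraph. Concretely, if $G$ has a normal Sylow $q$-subgroup $Q$ and a Sylow $p$-subgroup $\cong\mathbb Z_p\times\mathbb Z_p$ with $p\geq 7$, then $G$ already contains $\mathbb Z_p\times\mathbb Z_p$ as a subgroup and Proposition~\ref{genus 2} gives $\gamma(\Gamma(G))>1$; if $p=2,3,5$ one counts the $p+1$ subgroups of order $p$, the subgroups of order $pq$ obtained by multiplying each with $Q$, and the normal subgroups of order $p^2$, $q$, $pq$, to extract $K_{3,7}$ or $K_{4,5}$ or $K_8$ — each non-toroidal by Theorem~\ref{genus 100}. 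When the Sylow $p$-subgroup is cyclic $\mathbb Z_{p^2}$, the group is some $\mathbb Z_q\rtimes_t\mathbb Z_{p^2}$; one lists the chain of subgroups of $\mathbb Z_{p^2}$, the conjugates of $\mathbb Z_q$, and the intermediate subgroups of order $pq$, checks permutability directly from the presentation, and for $p\geq 5$ or $\alpha$-type reasons produces a forbidden subgraph, while for $p=2$ the small groups $\mathbb Z_3\rtimes\mathbb Z_4$ and $\mathbb Z_5\rtimes\mathbb Z_4$ must be handled by hand. I also need to separately dispose of $\mathbb Z_{p^2}\rtimes\mathbb Z_q$ and $\mathbb Z_q\rtimes(\mathbb Z_p\times\mathbb Z_p)$, typically by finding a non-permuting pair of subgroups of order $q$ together with normal subgroups, again yielding $K_{3,k}$ or $K_{m,n}$ beyond the toroidal threshold, or by invoking an earlier subcase since such a group contains a previously-handled subgroup.

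For the three positive cases the task is to compute $\Gamma(G)$ exactly. For $\mathbb Z_3\rtimes\mathbb Z_4\cong\mathrm{Dic}_3$ and $\mathbb Z_5\rtimes\mathbb Z_4$ I would list all proper subgroups (there are few: the cyclic subgroups of the normal $\mathbb Z_q$, the cyclic $\mathbb Z_2$, $\mathbb Z_4$ subgroups, the subgroups of order $2q$ and $q$ coming from the normal part, etc.), mark the normal ones, decide permutability pairwise, and then recognize the resulting graph; I expect $\Gamma(\mathbb Z_3\rtimes\mathbb Z_4)$ to embed on both the torus and the projective plane while $\Gamma(\mathbb Z_5\rtimes\mathbb Z_4)$ embeds on the torus but contains a subgraph (likely $K_{3,5}$ or $K_{4,4}$) that is not projective-planar by Theorem~\ref{genus 100}, which is exactly why $\mathbb Z_5\rtimes\mathbb Z_4$ appears in part~(1) but not part~(2). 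For $(\mathbb Z_p\times\mathbb Z_p)\rtimes\mathbb Z_q$ with the fixed-point-free action of order $q$ (so that the Sylow $p$-subgroup $N\cong\mathbb Z_p\times\mathbb Z_p$ is the unique minimal normal subgroup, the $p+1$ lines of $N$ are permuted cyclically and fused by the $\mathbb Z_q$-action, and the $q$ Sylow $q$-subgroups are the only other proper subgroups besides $N$ and $\{e\}$), I would argue that $N$ is normal (adjacent to all), the $p+1$ order-$p$ subgroups are pairwise non-adjacent, no Sylow $q$-subgroup permutes with any order-$p$ subgroup (a product of orders $pq$ would have to be a subgroup, forcing normality of the order-$p$ subgroup, contradicting fixed-point-freeness) and distinct Sylow $q$-subgroups do not permute, so $\Gamma(G)$ is a star $K_{1,\,p+1+q}$ centered at $N$ — wait, one must be careful: $N$ of order $p^2$ has no proper subgroups of prime-squared index issues, so the only proper subgroups are exactly $\{e\}$, the $p+1$ lines, $N$ itself, and the $q$ conjugate $\mathbb Z_q$'s; since $\{e\}$ and $N$ are adjacent to everything and the lines/$\mathbb Z_q$'s form an independent set, $\Gamma(G)\cong K_2+\overline{K}_{p+q+1}$, whose genus I then compare to $1$ to see $p=3,5$ works for the torus but only $p=3$ for the projective plane.

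The main obstacle I anticipate is not any single embedding computation but the completeness of the group-theoretic classification of order-$p^2q$ groups together with the delicate permutability analysis in the semidirect products $\mathbb Z_q\rtimes_t\mathbb Z_{p^2}$: deciding exactly which subgroups permute there (e.g., whether an intermediate $\mathbb Z_{pq}$ permutes with a Sylow $p$-subgroup) requires care, and the borderline small cases $p=2,3,5$ with $q\in\{3,5,7,\dots\}$ must each be checked against several obstructions — in particular distinguishing the toroidal-but-not-projective-planar instances (which hinge on the $K_{3,5}$ and $K_{4,4}$ thresholds in Theorem~\ref{genus 100}, and possibly on the explicit torus obstructions $\mathcal{A}_i$, $\mathcal{B}_i$) from the fully embeddable ones is where the argument is least mechanical.
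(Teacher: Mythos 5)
Your overall strategy---running through the classification of groups of order $p^2q$, producing $K_{3,7}$, $K_{4,5}$ or $K_8$ from normal subgroups in the negative cases, and computing $\Gamma(G)$ exactly for the survivors---is the same as the paper's, and most of the case analysis you sketch would go through. However, your computation of $\Gamma(G)$ for the decisive positive family $(\mathbb Z_p\times\mathbb Z_p)\rtimes\mathbb Z_q$ with fixed-point-free action is wrong in three ways, and the errors compound into a graph of the wrong genus. First, the $p+1$ subgroups of order $p$ are \emph{not} pairwise non-adjacent: they are all subgroups of the abelian normal Sylow $p$-subgroup $N\cong\mathbb Z_p\times\mathbb Z_p$, so any two of them permute, and they form a clique $K_{p+1}$, not an independent set. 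Second, the number of Sylow $q$-subgroups is $p^2$ (by Sylow, $n_q\mid p^2$ and $n_q\equiv 1\pmod q$, and fixed-point-freeness rules out $n_q=1$), not $q$. Third, the trivial subgroup is not a vertex of $\Gamma(G)$ (the paper's convention, visible from Lemma~\ref{l1} and equation~\eqref{e1}, excludes it). The correct graph is
\[
\Gamma(G)\cong K_1+(K_{p+1}\cup \overline{K}_{p^2}),
\]
as in~\eqref{e2222}, whereas your $K_2+\overline{K}_{p+q+1}$ is planar for every $p$ and $q$ and would therefore contradict both the planarity classification of Theorem~\ref{genus 19} and the very statement you are proving.

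This is not a cosmetic slip: the entire distinction driving the proposition rests on the clique $K_1+K_{p+1}\cong K_{p+2}$ that you lost. It is the subgraph $K_{p+2}$ that forces $p\le 5$ for toroidality ($K_7$ is toroidal, and for $p\ge 7$ one already has $\gamma(\Gamma(\mathbb Z_p\times\mathbb Z_p))>1$) and $p=3$ for projective-planarity (since $\overline{\gamma}(K_7)=3$ by Theorem~\ref{genus 100}, the $p=5$ case fails on the projective plane). Without the clique, you have no mechanism to separate $p=3$ from $p=5$ in part~(2), nor to show these graphs are non-planar at all. The rest of your outline (the $K_{3,5}$ obstruction for $\mathbb Z_5\rtimes\mathbb Z_4$ on the projective plane, the $K_{3,7}$/$K_{4,5}$/$K_8$ arguments for the discarded families, and the need to handle the semidirect products $\mathbb Z_q\rtimes_t\mathbb Z_{p^2}$ by hand for small $p$) matches the paper and is sound, but the fixed-point-free case must be redone with the correct subgroup lattice and adjacency.
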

\begin{proof}
Here we use the classification of groups of order $p^2q$ given in \cite[pp.\,76-80]{burn}.
We have the following cases to consider:

\noindent\textbf{Case 1:} $ p< q$:

\noindent\textbf{Case 1a:}  $p \nmid (q-1)$. By Sylow's Theorem, it is easy to see that there is no non-abelian groups in this case.

\noindent\textbf{Case 1b:}  $p ~|~ (q-1)$, but $p^2 \nmid (q-1)$. In this case, there are two non-abelian groups.

The first group is $G_1:=\mathbb Z_q\rtimes\mathbb Z_{p^2}=\langle a,b~|~a^q=b^{p^2}=1,{bab}^{-1}=a^i,ord_q(i)=p\rangle$.
% The structure of $\Gamma(G_1)$ isdescribed
It is shown in the proof of Proposition~3.4 in \cite{raj1} that
% Here we present the same for the sake of completeness.
%  The only proper subgroups of $G$ are $H:=\langle a\rangle$, $K:=\langle a,b^p\rangle$, $K':=\langle b^{p}\rangle$, $H_i:=\langle a^ib\rangle, i=1,2, \ldots , q$.
% Here $H$ and $K$ are normal in $G_1$; so they permutes with all the subgroups of $G_1$; $K'$ is a subgroup of $H_i$ for every $i$; no two
% $H_i$ and $H_j$ are permutes for every $i\neq j$.
\begin{equation}\label{e8}
\Gamma(G_1)\cong K_3+\overline{K}_q.
\end{equation}
If $q\geq7$, then $K_{3,7}$ is a subgraph of $\Gamma(G_1)$, and so $\gamma(\Gamma(G_1))>1$, $\overline{\gamma}(\Gamma(G_1))>1$.
Note that $q=5$ is not possible here. If $q=3$, then, by Theorem~\ref{genus 19},
$\Gamma(G_1)$ is non-planar, and it is a subgraph of $K_6$. Therefore, $\Gamma(G_1)$ is toroidal and projective-planar.

The second group in this case is $G_2:=\langle a,b,c~|~a^q=b^p=c^p=1,bab^{-1}=a^i,ca= ac,cb= bc,{ord_q}(i)=p\rangle$.
 Here $H_1:=\langle a\rangle$, $H_2:=\langle b\rangle$, $H_3:=\langle c\rangle$, $H_4:=\langle a,b\rangle$, $H_5:=\langle a,c\rangle$,
$H_6:=\langle b,c\rangle$, $H_7:=\langle ac\rangle$, $H_8:=\langle a^2c\rangle$, $H_9:=\langle bc\rangle$, $H_{10}:=\langle ab\rangle$
are proper subgroups of $G$.
Also, $H_1$, $H_4$, $H_5$ are normal in $G_2$. It
follows that $K_{3,7}$ is a subgraph of $\Gamma(G_2)$ with bipartition $X:=\{H_1$, $H_4$, $H_5\}$ and $Y:=\{H_2$, $H_3$, $H_6$, $H_7$, $H_8$, $H_9$, $H_{10}\}$.
Therefore, $\gamma(\Gamma(G_2))>1$ and $\overline{\gamma}(\Gamma(G_2))>1$.

\noindent\textbf{Case 1c:} $p^2 ~|~ (q-1)$.  In this case, we have both groups $G_1$ and  $G_2$ from Case 1b
 together with the group $G_3:=\mathbb Z_q\rtimes_{2}\mathbb Z_{p^2}=\langle a,b|a^q=b^{p^2}=1,bab^{-1}=a^i,{ord_{q}}(i)=p^2\rangle$.

For the group $G_1$, it is also possible to have $q=5$ and $p=2$ here. Then $G_1 =\mathbb Z_5\rtimes \mathbb Z_4$, and $\gamma(\Gamma(G_1))=1$: a toroidal embedding of $\Gamma(\mathbb Z_5\rtimes \mathbb Z_4)$ is shown in Figure~\ref{fig:embed}. However, since $\Gamma(\mathbb Z_5\rtimes \mathbb Z_4)$ contains $K_{3,5}$ as a subgraph, $\overline{\gamma}(\Gamma(G_1))>1$.

\begin{figure}[ ht ]
\begin{center}
\includegraphics[scale=1]{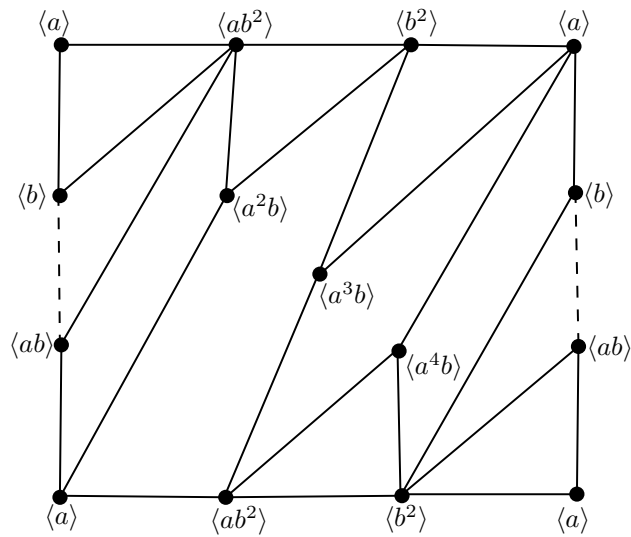}
\caption{A toroidal embedding of $\Gamma (\mathbb Z_5 \rtimes \mathbb Z_4)$.\label{fig:embed}}
\end{center}
\end{figure}
By Theorem~\ref{genus 19}, $\Gamma(G_3)$ is planar, and it is shown in \cite[p.\,7]{raj} that
\begin{equation}\label{e112}
 \Gamma(G_3)=K_2+qK_2.
\end{equation}

\noindent\textbf{Case 2:} $ p> q$:

\noindent\textbf{Case 2a:} $ q ~\nmid~ (p^{2}-1)$. In this case there is no non-abelian groups.

\noindent\textbf{Case 2b:} $ q ~|~ (p-1)$. We have two groups in this case. The first is
$G_4 :=\mathbb Z_{p^2}\rtimes \mathbb Z_q=\langle a,b~|~a^{p^2}= b^q=1,bab^{-1}=a^i,{ord_{p^2}}(i)=q\rangle$.
% The structure of $\Gamma(G_4)$ is described
It is shown in the proof of Proposition~3.4 in \cite{raj1} that
% Here we  present the same for the sake of completeness. By Sylow's Theorem, $G_4$ has a unique subgroup say $N$,
% of order $p^2$, which is normal in $G_4$ and has a unique subgroup $N'$ of order $p$; $G_4$ has $p^2$ Sylow q-subgroups, say
% $H_i, i=1,2\ldots,p^2$; it also has  $p$ subgroups $A_i:=\langle a^p,a^ib\rangle,i=0,1,\ldots p-1$. Since $N$ is normal in $G_4$,
% it permutes with all the subgroups of $G$; $N$ permutes with all the subgroups of $G$; no two $H_i,i=1,2,\ldots p^2-1$  permutes; no two
% $A_i,i=0,1\ldots p-1$  permutes; but $H_i,H_{i+p},\ldots,H_{i+p(p-1)}$ are subgroups of $A_i$ for $i=0,1,\ldots,p-1$. Therefore
\begin{equation}\label{e100}
 \Gamma(G_4)\cong K_2+pK_{1,p}.
\end{equation}
Clearly, $K_2+3K_{1,3}$ is a subgraph of $\Gamma(G_4)$ ($p\ge 3$ here). We show that $\gamma(K_2+3K_{1,3})>1$, implying $\gamma(\Gamma(G_4))>1$.
Consider the graph shown in Figure~\ref{fig:wow}, which is a subgraph of $K_2+3K_{1,3}$.
Since wye-delta transformations preserve embeddability of graphs in the torus (e.g., see \cite{arch}), the class of toroidal (and, respectively, planar) graphs is closed under wye-delta transformations.
However, by using wye-delta transformations, we can transform the graph in Figure~\ref{fig:wow} to the graph $\mathcal B_4$ of Figure~\ref{fig:noK33}, which is an obstruction for the torus.
% But the graph given in Figure~4 is a minor order obstruction for torus as shown in \cite[Figure 4, p. 3628]{and}.
It follows that $\gamma(K_2+3K_{1,3})>1$.
\begin{figure}[ ht ]
\begin{center}
\includegraphics[scale=.75]{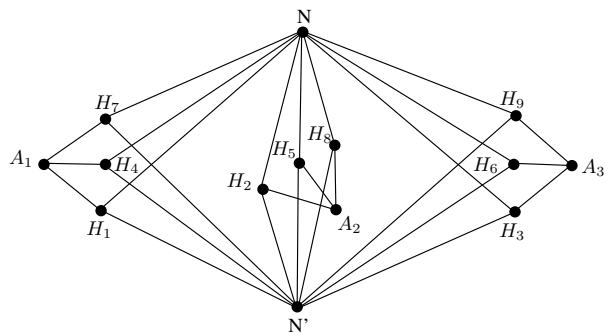}
\caption{A subgraph of $K_2+3K_{1,3}$.\label{fig:wow}}
\end{center}
% \begin{minipage}{0.45\linewidth}
% \includegraphics[scale=.75]{1figG_4.eps}
% \caption{A subgraph of $K_2+3K_{1,3}$}
% \end{minipage}
% \hfill
% \begin{minipage}{0.45\linewidth}
% \includegraphics[scale=.75]{1Fig4.eps}
% \caption{A minor order obstruction for the torus}
% \end{minipage}
 \end{figure}
Here $\Gamma(G_4)$ also contains a subgraph shown in Figure~\ref{fig:proj}, which is one of the obstructions for the projective plane (e.g., see Theorem~$0.1$ and graph $D_1$ of case (3.30) on p.\,345 in \cite{glov}). Therefore, $\overline{\gamma}(\Gamma(G_4))>1$.

\begin{figure}[ ht ]
\begin{center}
\includegraphics[scale=1]{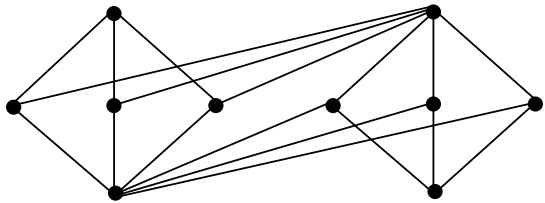}
\caption{An obstruction for the projective plane.\label{fig:proj}}
\end{center}
\end{figure}

Next, we have the family of groups $\langle a,b,c~|~a^p=b^p=c^q=1,cac^{-1}=a^i,cbc^{-1}=b^{i^t},ab= ba,{ord_{p}}(i)=q\rangle$.
 There are $(q+3)/2$ isomorphism types in this family: one for $t=0$, and one for each pair $\{ x, x^{-1}\}$ in $\mathbb F^{\times}_p$.
We will refer to all of these groups of order $p^2q$ as $G_{5(t)}$. Since $\mathbb Z_p\times \mathbb Z_p$ is a subgroup of $G_{5(t)}$, when $p>5$, $\Gamma(G_{5(t)})$ is not toroidal or projective-planar because
$\gamma(\Gamma(\mathbb Z_p\times \mathbb Z_p))>1$, $\overline{\gamma}(\Gamma(\mathbb Z_p\times \mathbb Z_p))>1$ by Proposition~\ref{genus 2}. If $p\leq 5$, then $G_{5(t)}$ has
 $H_1:=\langle a\rangle,H_2:=\langle b\rangle$,
 $H_3:=\langle c\rangle,H_4:=\langle a,b\rangle$, $H_5:=\langle a,c\rangle,H_6:=\langle b,c\rangle$, $H_7:=\langle ab\rangle,H_8:=\langle a^2b\rangle$
as its proper subgroups. Here $H_1$, $H_2$, $H_4$, $H_7$, $H_8$ permute with each other; $H_1$ is a subgroup of $H_5$; $H_2H_5=H_5H_4=
H_5H_7=H_5H_8=H_3H_4=H_6H_1=H_6H_4=H_6H_5=G_{5(t)}$; $H_3H_1=H_5$; $H_3$ is a subgroup of $H_5$. It follows that $\Gamma(G_{5(t)})$ has a subgraph isomorphic to  $\mathcal A_1$
of Figure~\ref{fig:8vertex}, which is an obstruction for the torus, and so $\gamma(\Gamma(G_{5(t)}))>1$.
Also, $\Gamma(G_{5(t)})$ contains a subgraph isomorphic to $K_{3,5}$, implying $\overline{\gamma}(\Gamma(G_{5(t)}))>1$.
%  as shown in \cite[Figure 4, p. 578]{pra}

\noindent\textbf{Case 2c:}  $q ~|~ (p+1)$. In this case, we have only one group of order
${p^2}q$, given by $G_6:=(\mathbb Z_p\times\mathbb Z_p)\rtimes\mathbb Z_q=\langle a,b,c~|~a^p=b^p=c^q=1,ab=ba,cac^{-1}=a^{i}b^{j},
cbc^{-1}= a^{k}b^{l} \rangle$, where $\bigl(\begin{smallmatrix}
  i & j\\ k & l
\end{smallmatrix} \bigr)$ has order $q$ in $GL_2(p)$.  $G_6$ has a subgroup isomorphic to $\mathbb Z_p \times \mathbb Z_p$, and, when $p>5$, Proposition~\ref{genus 2} gives
$\gamma(\Gamma(\mathbb Z_p\times \mathbb Z_p))>1$, $\overline{\gamma}(\Gamma(\mathbb Z_p\times \mathbb Z_p))>1$, implying $\gamma(\Gamma(G_6))>1$, $\overline{\gamma}(\Gamma(G_6))>1$.

Therefore, we only need to investigate the cases $p= 3$ and $p=5$. First, suppose $G_6$ has a subgroup of order $pq$. If $p=5$, then $H_1:=\langle a\rangle$, $H_2:=\langle b\rangle$, $H_3:=\langle c\rangle$,
 $H_4:=\langle a,b\rangle$, $H_5:=\langle a,c\rangle$, $H_6:=\langle b,c\rangle$, $H_7:=\langle ab\rangle$, $H_8:=\langle a^2b\rangle$,
$H_9:=\langle a^3b\rangle$,
 $H_{10}:=\langle a^4b\rangle$ are proper subgroups of $G_6$. Here $H_4$ is normal in $G_6$; $H_1H_3=H_5;
H_2H_3=H_6;H_1$, $H_2$ permute with $H_7$, $H_8$, $H_9$, $H_{10}$. It follows that $K_{3,7}$ is a subgraph of $\Gamma(G_6)$ with
bipartition $X:=\{H_1$, $H_2$, $H_4\}$ and $Y:=\{H_3$, $H_5$, $H_6$, $H_7$, $H_8$, $H_9$, $H_{10}\}$.
If $p=3$, then $H_1:=\langle a\rangle$, $H_2:=\langle b\rangle$, $H_3:=\langle c\rangle$, $H_4:=\langle a,b\rangle$, $H_5:=\langle a,c\rangle$,
$H_6:=\langle b,c\rangle$, $H_7:=\langle ac\rangle$, $H_8:=\langle a^2c\rangle$, $H_9:=\langle ab\rangle$, $H_{10}:=\langle a^2b\rangle$ are proper
subgroups of $G_6$.
Here $H_1$, $H_2$, $H_4$, $H_9$, $H_{10}$ permute with each other; $H_1H_3=H_5=H_1H_7=H_1H_8$; $H_2H_5=G_6$; $H_2$ is a subgroup of $H_6$;
$H_2H_7=\langle b,ac\rangle$; $H_2H_8=\langle b,a^2c\rangle$; $H_4$ is a normal subgroup of $G_6$. It follows that $\Gamma(G_6)$
contains $K_{3,7}$ as a subgraph with bipartition $X:=\{H_1$, $H_2$, $H_4\}$ and $Y:=\{H_3$, $H_5$, $H_6$, $H_7$, $H_8$, $H_9$, $H_{10}\}$.
Therefore, $\gamma(\Gamma(G_6))>1$, $\overline{\gamma}(\Gamma(G_6))>1$ when $p=3$ or $5$.

If $G_6$ has no subgroup of order $pq$, then  $G_6:=\langle a, b, c~|~ a^p=b^p=c^q=1, ab=ba, cac^{-1}=b^{-1},
cbc^{-1}= ab^{l} \rangle$, where $\bigl(\begin{smallmatrix}
  0 & -1\\ 1 & l
\end{smallmatrix} \bigr)$ has order $q$ in $GL_2(p)$. In this case, $G_6$ has a unique subgroup of order $p^2$, $p+1$ subgroups of order $p$, $p^2$ subgroups of order $q$, and these are the only proper subgroups of $G_6$. It follows that
\begin{equation}\label{e2222}
\Gamma(G_6)\cong K_1+(K_{p+1}\cup \overline{K}_{p^2}),
\end{equation}
where $p=3$, 5. Thus, $\Gamma(G_6)$ is toroidal when $p=3, 5$; also, $\overline{\gamma}(\Gamma(G_6))=1$ when $p=3$, and $\overline{\gamma}(\Gamma(G_6))>1$ when $p=5$.

Note that if $(p,q)= (2, 3)$, the Cases 1 and 2 are not mutually exclusive. Up to isomorphism, there
are three non-abelian groups of order 12: $\mathbb Z_3 \rtimes \mathbb Z_4$, $D_{12}$, and $A_4$.
Here the permutability graphs of subgroups of $\mathbb Z_3 \rtimes \mathbb Z_4$ (the group $G_1$), and  $D_{12}$ (the group $G_2$) are
already dealt with in Case 1b. However, for the case of $A_4 $ (the group $G_6$), by Theorem~\ref{genus 19},
$\Gamma(A_4)$ is planar, and it is shown in \cite[p.\,8]{raj} that
\begin{equation}\label{e4}
\Gamma(A_4)\cong K_1+ (K_3\cup \overline{K}_4).
\end{equation}
Putting all these cases together, the result follows.
\end{proof}

\begin{pro}\label{genus 8}
 If $G$ is a non-abelian group of order $p^\alpha q$, where $p$, $q$ are two distinct primes and $\alpha\geq3$, then $\gamma(\Gamma(G))>1$, $\overline{\gamma}(\Gamma(G))>1$.
\end{pro}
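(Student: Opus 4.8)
The plan is to reduce the statement to a handful of small groups and to show that each of them already contains a large enough subgraph among $K_8$, $K_{3,7}$, or one of the toroidal obstructions $\mathcal A_i$, $\mathcal B_i$ of Theorems~\ref{genus 501} and \ref{genus 500}, so that both $\gamma(\Gamma(G))>1$ and $\overline{\gamma}(\Gamma(G))>1$. First I would split on whether $G$ has an abelian subgroup to which Proposition~\ref{genus 2} applies: if $G$ contains $\mathbb Z_{p^k}\times \mathbb Z_{p^l}$ with $k,l\ge 2$, or $\mathbb Z_p\times \mathbb Z_{pq}$, or $\mathbb Z_p\times \mathbb Z_p\times \mathbb Z_p$, or a cyclic subgroup with more than $7$ proper subgroups, then $\gamma(\Gamma(G))>1$ and $\overline{\gamma}(\Gamma(G))>1$ immediately, since a subgroup's permutability graph is an induced subgraph of $\Gamma(G)$. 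Likewise, if $G$ has a non-abelian subgroup of order $p^\beta$ with $\beta\ge 3$, or of order $p^2q'$ handled in Proposition~\ref{genus 7}, or a subgroup isomorphic to $M_{p^\alpha}$ (where every pair of subgroups permutes, giving $K_r$ for large $r$), we are again done. So the real work is to enumerate the groups of order $p^\alpha q$, $\alpha\ge 3$, that avoid all these subgroups.

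The key structural input is Sylow theory together with the fact (used already in Proposition~\ref{genus 5} via Burnside's theorem \cite[Theorem IV and its Corollary, p.\,129]{burn}) that a $p$-group of order $p^\alpha$ with $\alpha\ge 3$ has at least three subgroups of each order $p$, $p^2$, \dots, $p^{\alpha-1}$, with at least one of each order normal, unless it is generalized quaternion. I would argue as follows. Let $P$ be a Sylow $p$-subgroup of $G$; then $|P|=p^\alpha\ge p^3$. If $P$ is abelian, $P$ is not cyclic (else $G$ would be metacyclic and we can produce a large cyclic or $K_8$ subgraph directly, or $P$ itself already gives $\Gamma(\mathbb Z_{p^\alpha})\cong K_{p^\alpha - 2}$ with $p^\alpha-2\ge 6$), hence $P\cong \mathbb Z_{p^k}\times\mathbb Z_{p^l}\times\cdots$ and one of the abelian subcases of Proposition~\ref{genus 2} (Subcases 2b--2g) applies to give $\gamma>1$, $\overline\gamma>1$. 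If $P$ is non-abelian, then $\Gamma(P)$ already satisfies $\gamma(\Gamma(P))>1$, $\overline{\gamma}(\Gamma(P))>1$ by Proposition~\ref{genus 5}. In either case, since $P<G$ is a proper subgroup (as $q\mid |G|$), $\Gamma(P)$ is an induced subgraph of $\Gamma(G)$ and the conclusion follows.

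The main obstacle is the bookkeeping needed to be sure the Sylow $p$-subgroup $P$ is always a \emph{proper} subgroup and that every relevant subgroup of $P$ remains a proper subgroup of $G$ — this is automatic here because $|G|=p^\alpha q > p^\alpha = |P|$, so $P$ is proper and $\Gamma(P)$ embeds as an induced subgraph of $\Gamma(G)$; thus non-embeddability of $\Gamma(P)$ on the torus or projective plane transfers to $\Gamma(G)$. One should also note the boundary case $\alpha=3$, $p=2$, where $P$ is one of $Q_8$, $M_8$, $D_8$, or $\mathbb Z_8$, $\mathbb Z_4\times\mathbb Z_2$, $\mathbb Z_2^3$: the genuinely small $P$ are $Q_8$ (with $\Gamma(Q_8)=K_4$) and $\mathbb Z_4\times\mathbb Z_2$ (with $\Gamma\cong K_6$) and $D_8$, which on their own are toroidal, so these cannot be dismissed by looking at $P$ alone. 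For $p=2$, $\alpha=3$ with $P\in\{Q_8,D_8,\mathbb Z_4\times\mathbb Z_2\}$ one has $|G|=8q$, and here I would instead invoke the generators/relations classification of groups of order $8q$ together with a direct count of subgroups to exhibit $K_{3,7}$ or $K_8$ (for instance via the normal Sylow subgroup and its many complements, or via several normal subgroups of $P$ together with the conjugates of a Sylow $q$-subgroup); this case, while finite, is the one requiring the most care, and I expect it to be the bulk of the argument.
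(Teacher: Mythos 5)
Your overall skeleton (pass to a Sylow $p$-subgroup $P$, invoke Propositions~\ref{genus 2} and \ref{genus 5} to kill most cases, then treat the few survivors by hand) is the same reduction the paper uses, but your execution has a genuine gap: you dismiss the case of a \emph{cyclic} Sylow $p$-subgroup with the claim that $\Gamma(\mathbb Z_{p^{\alpha}})\cong K_{p^{\alpha}-2}$ with $p^{\alpha}-2\ge 6$. This is false: a cyclic $p$-group of order $p^{\alpha}$ has exactly $\alpha-1$ proper subgroups, so $\Gamma(\mathbb Z_{p^{\alpha}})\cong K_{\alpha-1}$, which is planar for $\alpha\le 5$ and toroidal for $\alpha\le 8$. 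Hence the groups with cyclic Sylow $p$-subgroup --- $\mathbb Z_{p^{3}}\rtimes\mathbb Z_q$ (when $p>q$), $\mathbb Z_q\rtimes\mathbb Z_{p^{\alpha}}$ (when $p<q$), and more generally every $G$ of order $p^{\alpha}q$ with $3\le\alpha\le 8$ and $P$ cyclic --- cannot be settled by looking at $\Gamma(P)$ alone, and your appeal to ``$G$ metacyclic, so a large cyclic or $K_8$ subgraph exists'' is not a proof. These are precisely the cases where the paper does its real work: it lists ten explicit subgroups of $\mathbb Z_{p^{3}}\rtimes\mathbb Z_q$ and of $\mathbb Z_q\rtimes\mathbb Z_{p^{3}}$ to exhibit $K_{3,7}$, and for $\alpha>3$ it runs an induction on $\alpha$, passing either to a subgroup $\langle a^p,b\rangle\cong\mathbb Z_{p^{\alpha-1}}\rtimes\mathbb Z_q$ or to a normal subgroup of order $p^{\alpha-1}q$ and assembling the obstruction $\mathcal A_1$. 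None of this is in your proposal, and your identified residual case ($|G|=8q$ with $P\in\{Q_8,D_8,\mathbb Z_4\times\mathbb Z_2\}$) omits, for example, $\mathbb Z_7\rtimes\mathbb Z_{27}$ and $\mathbb Z_{q}\rtimes\mathbb Z_{2^{\alpha}}$ for $4\le\alpha\le 8$.

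Two smaller inaccuracies: $\Gamma(D_8)\cong K_4+\overline{K}_{2,2}$ is shown in Proposition~\ref{genus 5} to contain the torus obstruction $\mathcal A_1$ and $K_{3,5}$, so $D_8$ \emph{is} dismissed by looking at $P$ alone (your listing it among the toroidal survivors is harmless but wrong); and possessing a subgroup of order $p^2q'$ ``handled in Proposition~\ref{genus 7}'' does not immediately give $\gamma>1$, since several of those groups have toroidal permutability graphs. The parts of your reduction that are correct (non-abelian $P$ via Proposition~\ref{genus 5}; non-cyclic abelian $P$ other than $\mathbb Z_4\times\mathbb Z_2$ via Proposition~\ref{genus 2}) do match the paper, but as it stands the proposal does not prove the statement.
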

\begin{proof}
Let $P$ denote a Sylow $p$-subgroup of $G$. We shall prove this result by induction on $\alpha$. First we prove this when $\alpha=3$.
If $p>q$, then $n_p=1$, by Sylow's theorem and our group $G\cong P\rtimes \mathbb Z_q$.
If $\gamma(\Gamma(P))>1$, $\overline{\gamma}(\Gamma(P))>1$, then
$\gamma(\Gamma(G))>1$, $\overline{\gamma}(\Gamma(G))>1$, respectively.
Therefore, it is enough to consider the cases when $\gamma(\Gamma(P))\leq 1$, $\overline{\gamma}(\Gamma(G))\leq 1$. By Propositions~\ref{genus 2} and ~\ref{genus 5},
we must have $P\cong \mathbb Z_{p^3}$. Then
$G\cong \mathbb Z_{p^3}\rtimes \mathbb Z_q=\langle a, b~|~a^{p^3}=b^q=1, bab^{-1}=a^i, \mbox{ord}_q(i)=p\rangle$ and
$H_1:=\langle a\rangle$, $H_2:=\langle a^p\rangle$, $H_3:=\langle a^{p^2}\rangle$,
$H_4:=\langle a^p,b\rangle$, $H_5:=\langle a^{p^2},b\rangle$,
 $H_6:=\langle b\rangle$, $H_7:=\langle ab\rangle$, $H_8:=\langle a^2b\rangle$, $H_9:=\langle a^3b\rangle$, $H_{10}:=\langle a^4b\rangle$ are proper
subgroups of $G$. Also, $H_1$, $H_2$, $H_3$ are normal in $G$. It follows that $K_{3,7}$
is a subgraph of $\Gamma(G)$ with bipartition $X:=\{H_1$, $H_2$, $H_3\}$ and $Y:=\{H_4$, $H_5$, $H_6$, $H_7$, $H_8$, $H_9$, $H_{10}\}$.
Therefore, $\gamma(\Gamma(G))>1$, $\overline{\gamma}(\Gamma(G))>1$.

Now, let us consider the case $p< q$ and $(p,q)\neq (2$, $3)$. Here $n_{q} = p$ is not possible. If $n_{q}= p^{2}$, then $q| (p+1)(p-1)$ which implies that
$q| (p+1)$ or $q| (p-1)$. However, this is impossible, since $q> p > 2$. If $n_{q}= p^3$, then there are $p^{3}(q-1)$ elements of order $q$.
However, this only leaves $p^{3}q-p^{3}(q-1)= p^3$ elements, and the Sylow p-subgroup must be normal, a case we already considered. Therefore,
the only remaining possibility is that $G\cong \mathbb Z_q \rtimes P$. By Propositions~\ref{genus 2} and \ref{genus 5}, we have
$P\cong \mathbb Z_{p^3}$ or $\mathbb Z_4\times \mathbb Z_2$ or $Q_8$. If $P\cong \mathbb Z_{p^3}$, then
$G\cong \mathbb Z_q\rtimes \mathbb Z_{p^3}=\langle a,b~|~a^q=b^{p^3}=1, bab^{-1}=a^i, \mbox{ord}_q(i)=p^t\rangle$, $p^t~|~(q-1)$ and
$H_1:=\langle a\rangle$,
$H_2:=\langle a,b^p\rangle$, $H_3:=\langle ab^{p^2}\rangle$, $H_4:=\langle b\rangle$, $H_5:=\langle b^p\rangle$, $H_6:=\langle b^{p^2}\rangle$,
$H_7:=\langle ab\rangle$,
$H_8:=\langle a^2b\rangle$, $H_9:=\langle a^3b\rangle$, $H_{10}:=\langle a^4b\rangle$ are proper subgroups of $G$. Here $H_1$, $H_2$ are normal in $G$, and
$H_6$ is a subgroup of $H_2$, $H_3$, $H_4$, $H_5$, $H_7$, $H_8$, $H_9$, $H_{10}$. It follows that $K_{3,7}$ is a subgraph of
$\Gamma(G)$ with bipartition $X:=\{H_1$, $H_2$, $H_6\}$ and $Y:=\{H_3$, $H_4$, $H_5$, $H_7$, $H_8$, $H_9$, $H_{10}\}$. Therefore, $\gamma(\Gamma(G))>1$, $\overline{\gamma}(\Gamma(G))>1$. If
$P\cong \mathbb Z_4\times \mathbb Z_2$,
then, by~\eqref{e1}, $P$ together with its proper subgroups forms $K_7$ as a subgraph of $\Gamma(G)$.
Since $H:=\mathbb Z_q$
is normal in $G$, $P$ together with its proper subgroups and $H_1$ form $K_8$ as a subgraph of $\Gamma(G)$. Therefore, $\gamma(\Gamma(G))>1$, $\overline{\gamma}(\Gamma(G))>1$.
If $P\cong Q_8$, then
$H_1:=\langle a\rangle$, $H_2:=\langle b,c\rangle$, $H_3:=\langle a,b\rangle$, $H_4:=\langle a,c\rangle$,
$H_5:=\langle b\rangle$, $H_6:=\langle c\rangle$, $H_7:=\langle bc\rangle$, $H_8:=\langle b^2\rangle$ are subgroups of $G$, where $a\in \mathbb Z_q$,
$\langle b,c\rangle=Q_8$. Here $H_2$, $H_5$, $H_6$, $H_7$,
$H_8$ permute with each other; $H_1$, $H_3$, $H_4$ are normal in $G$. It follows that these eight subgroups form $K_8$ as a subgraph of $\Gamma(G)$
and so $\gamma(\Gamma(G))>1$, $\overline{\gamma}(\Gamma(G))>1$.

If $(p,q)=(2,3)$, then $G\cong S_4$, and $D_8$  is a subgroup of $S_4$. By Proposition~\ref{genus 5}, $\gamma(\Gamma(D_8))>1$, $\overline{\gamma}(\Gamma(D_8))>1$.
% $H_1:=\langle (234)\rangle$, $H_2:=\langle (134)\rangle$, $H_3:=\langle (124)\rangle$,
% $H_4:=\langle (123)\rangle$, $H_5:=\langle (14)\rangle$, $H_6:=\langle (13)\rangle$, $H_7:=\langle (12)\rangle$,
% $H_8:=\langle (34)\rangle$, $H_9:=\langle (24)\rangle$ are proper subgroups of $G$. Here $H_1$, $H_2$, $H_3$, $H_4$ are permutes with $H_5$, $H_6$, $H_7$,
% $H_8$, $H_9$.
% It follows that $K_{4,5}$ is a subgraph of $\Gamma(G)$.
Thus, we have
\begin{equation}\label{e5}
\gamma(\Gamma(S_4))>1,\ \ \ \overline{\gamma}(\Gamma(S_4))>1.
\end{equation}
So, the result is true when $\alpha=3$.

Assume now $\alpha >3$ and the result is true for all the non-abelian groups of order $p^mq$, $m<\alpha$. We prove the result for $\alpha$. If $n_p(G)=1$,
then our group is isomorphic to $P\rtimes \mathbb Z_q$ with $\gamma(\Gamma(P))\leq1$. By Proposition~\ref{genus 2}, $P\cong \mathbb Z_{p^{\alpha}}$.
Then $G\cong \mathbb Z_{p^\alpha}\rtimes \mathbb Z_q=\langle a,b~|~a^{p^\alpha}=b^q=1, bab^{-1}=a^i, i^q\equiv 1(\mbox{mod}~ p^\alpha)\rangle$. It
has a subgroup $H:=\langle a^p,b\rangle\cong \mathbb Z_{p^{\alpha-1}}\rtimes \mathbb Z_q$, and, by induction hypothesis, $\gamma(\Gamma(H))>1$, $\overline{\gamma}(\Gamma(H))>1$.
Therefore, $\gamma(\Gamma(G))>1$, $\overline{\gamma}(\Gamma(G))>1$.
If $n_p\neq 1$, since $G$ is solvable, $G$ has a normal subgroup $N$ of order $p^{\alpha-1}q$ and at least one subgroup of order $p^\alpha$, say $H_1$. If $\gamma(\Gamma(N))>1$ and $\overline{\gamma}(\Gamma(N))>1$, then $\gamma(\Gamma(G))>1$ and $\overline{\gamma}(\Gamma(G))>1$. So, by Propositions~\ref{genus 2} and \ref{genus 5},
we have $N\cong \mathbb Z_{p^3q}$. Let $H_2$, $H_3$, $H_4$, $H_5$, $H_6$, $H_7$ be the subgroups of $N$ of order $p$, $p^2$, $p^3$, $q$, $pq$,
$p^2q$, respectively. Here $H_1H_5=H_1H_6=H_1H_7=G$; $N$ together with its subgroups form $K_7$ as a subgraph of $\Gamma(G)$.
It follows that these eight subgroups together form a subgraph in $\Gamma(G)$, which is isomorphic to $\mathcal A_1$ of Figure~\ref{fig:8vertex},
so $\gamma(\Gamma(G))>1$, $\overline{\gamma}(\Gamma(G))>1$.
%  given in \cite[Figure 4, p. 578]{pra}
% Suppose $K$ is abelian.
% Now we consider two cases, If $K$ is cyclic, then $H_1$, $H_2$, $H_3$, $H_4$, $H_5$, $H_6$ are subgroups of $K$ of order $p$, $p^2$, $p^\alpha$, $q$, $pq$,
% $p^2q$ respectively. Here $H_4$ is a subgroup of $H_5$, $H_6$; $K_1H_5=K_1H_6=K_1H_4=G$; $K$ is normal in $G$. It follows that $\Gamma(G)$ have a
% obstruction given in figure[]. So $\gamma(\Gamma(G))>1$.
% If $K$ is non-cyclic,
% then by Theorem~\ref{genus 2}, $\gamma(\Gamma(K))>1$. If $K$ is non-abelian, then by induction hypothesis,
% $\gamma(\Gamma(G))>1$. Therefore, $\gamma(\Gamma(G))>1$
\end{proof}

\begin{pro}\label{genus 9}
 If $G$ is a non-abelian group of order $p^2q^2$, where $p$ and $q$ are distinct primes, then $\Gamma(G)$ is toroidal and projective-planar if and only if $G\cong (\mathbb Z_3\times \mathbb Z_3)\rtimes \mathbb Z_4=\langle a,b,c~|~a^3=b^3=c^4=1, ab=ba, cac^{-1}=b^{-1}$,
$cbc^{-1}=ab^l\rangle$, where $\bigl(\begin{smallmatrix}
  0 & -1\\ 1 & l
\end{smallmatrix} \bigr)$ has order dividing $4$ in $GL_2(3)$.
\end{pro}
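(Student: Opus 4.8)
The plan is to follow the same strategy that governs Propositions~\ref{genus 5}--\ref{genus 8}: enumerate the non-abelian groups of order $p^2q^2$, and for each one either exhibit a ``forbidden'' subgraph of $\Gamma(G)$ (a copy of $K_8$, $K_{3,7}$, $K_{4,5}$, $K_{3,5}$, or one of the obstructions $\mathcal{A}_i$, $\mathcal{B}_i$, or the relevant projective-plane obstructions) that forces $\gamma(\Gamma(G))>1$ or $\overline{\gamma}(\Gamma(G))>1$, or, in the surviving case, show directly that $\Gamma(G)$ embeds on the torus and on the projective plane. First I would dispose of the groups containing an abelian subgroup already known to be ``large'': if $G$ has a subgroup isomorphic to $\mathbb{Z}_p\times\mathbb{Z}_p$ with $p\ge 5$, or to $\mathbb{Z}_{p^2}\times\mathbb{Z}_p$, or to $\mathbb{Z}_{p^2}\times\mathbb{Z}_{p^2}$, or to $\mathbb{Z}_p\times\mathbb{Z}_{pq}$, then Proposition~\ref{genus 2} (and its subcases) immediately gives $\gamma(\Gamma(G))>1$ and $\overline{\gamma}(\Gamma(G))>1$. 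Similarly, any $G$ that contains a non-abelian subgroup of order $p^3$ or of order $p^3q$ or $pq^3$ is killed by Propositions~\ref{genus 5} and \ref{genus 8}. This should eliminate the bulk of the cases and leave only groups whose Sylow subgroups are small cyclic or elementary abelian and whose order is of the shape $3^2\cdot q^2$ or $2^2\cdot q^2$ with $q$ small.

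Next I would invoke the classification of groups of order $p^2q^2$ (as in \cite[pp.\,76-80]{burn} or the literature on groups of order $p^2q^2$), splitting on $p<q$ versus $p>q$ and on the divisibility conditions $q\mid(p\pm 1)$, $q^2\mid(p^2-1)$, etc.\ exactly as in the proof of Proposition~\ref{genus 7}. For each non-abelian type I would list enough proper subgroups (Sylow subgroups of each order, their normal members, the relevant cyclic subgroups generated by products $ab$, $a^2b$, \dots) and read off which pairs permute; normal subgroups permute with everything, so a normal subgroup of each order $p$, $p^2$, $q$, $q^2$, $pq$, $\dots$ together with several non-normal ones of the opposite part typically produces a $K_{3,7}$, a $K_{4,5}$, or a $K_8$, giving $\gamma(\Gamma(G))>1$ and $\overline{\gamma}(\Gamma(G))>1$ via Theorem~\ref{genus 100} and the obstruction lists of Theorems~\ref{genus 501}--\ref{genus 500}. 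The cyclic-Sylow cases $\mathbb{Z}_{q^2}\rtimes\mathbb{Z}_{p^2}$ and $\mathbb{Z}_{p^2}\rtimes\mathbb{Z}_{q^2}$, and the mixed ones like $(\mathbb{Z}_p\times\mathbb{Z}_p)\rtimes\mathbb{Z}_{q^2}$ or $\mathbb{Z}_q^2\rtimes\mathbb{Z}_{p^2}$, would be handled by the same bookkeeping; many of them contain $\Gamma$ of a group already treated in Proposition~\ref{genus 7} (e.g.\ an order-$p^2q$ or order-$pq^2$ subgroup whose permutability graph is non-toroidal), which shortcuts the argument.

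The surviving candidate is $G\cong(\mathbb{Z}_3\times\mathbb{Z}_3)\rtimes\mathbb{Z}_4$ with the presentation stated, i.e.\ the group of order $36$ in which the Sylow $3$-subgroup is the normal elementary abelian $\mathbb{Z}_3\times\mathbb{Z}_3$ and $\mathbb{Z}_4$ acts with $\left(\begin{smallmatrix}0&-1\\1&l\end{smallmatrix}\right)$ of order $4$ in $GL_2(3)$. For this group I would enumerate all proper subgroups: the unique normal subgroup $N=\mathbb{Z}_3\times\mathbb{Z}_3$ of order $9$, the four subgroups of order $3$ inside $N$ (acted on by $\mathbb{Z}_4$, which permutes them; only one is normal if the action on them is nontrivial, else more), the Sylow $2$-subgroups $\mathbb{Z}_4$ (nine of them, or fewer) and their order-$2$ subgroups, and the subgroups of order $6$ and $12$ if any. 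I would determine exactly which pairs permute (every subgroup permutes with $N$; the order-$3$ subgroups permute among themselves since they lie in the abelian $N$; a Sylow $2$-subgroup times an order-$3$ subgroup is the whole group unless that order-$3$ subgroup is normalized, etc.), thereby pinning down $\Gamma(G)$ up to isomorphism, and then I would draw explicit embeddings of $\Gamma(G)$ on the torus and on the projective plane, checking that it avoids all the obstructions. I expect \textbf{this last verification to be the main obstacle}: one must be certain that the concrete graph $\Gamma(G)$ contains no $K_8$, no $K_{3,7}$, none of $\mathcal{A}_1,\mathcal{A}_2,\mathcal{A}_3$, $\mathcal{B}_1,\dots,\mathcal{B}_4$, and no projective-plane obstruction, and produce the two explicit drawings; a delicate point is the dependence of the subgroup lattice on the parameter $l$ (equivalently, on whether the order of the matrix is $2$ or $4$), so I would either argue the graph is independent of this choice or treat the two subcases separately, just as Proposition~\ref{genus 7} handled the $A_4$-type degeneracy. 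Finally, combining the elimination of every other type with the explicit embeddings for this one group yields the stated ``if and only if''.
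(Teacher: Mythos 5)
Your proposal follows essentially the same route as the paper: invoke the classification of groups of order $p^2q^2$, eliminate all but one isomorphism type by exhibiting $K_8$, $K_{3,7}$, or $K_{4,5}$ subgraphs (or by citing Propositions~\ref{genus 2}, \ref{genus 5}, \ref{genus 8} for large subgroups), and then determine $\Gamma(G)$ explicitly for the surviving group $(\mathbb Z_3\times\mathbb Z_3)\rtimes\mathbb Z_4$ — the paper finds $\Gamma(G)\cong K_2+(K_4\cup 9K_2)$ and gives the two embeddings in figures, and it resolves your worry about the parameter $l$ exactly as you anticipate, by splitting on whether $G$ has subgroups of order $pq$ and $pq^2$.
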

\begin{proof}
Here we use the classification of group of order $p^2q^2$ given in~\cite{lin}.

Let $P$ and $Q$ be a Sylow $p$-subgroup and Sylow $q$-subgroup of $G$, respectively. Without loss of generality, we assume that $p>q$.
By Sylow's Theorem, $n_p=1,q,q^2$. However, $n_p=q$ is not possible since $p>q$. If $n_p=q^2$, then $p|(q+1)(q-1)$. This implies that $p|(q+1)$, which is possible only when $(p,q)=(3,2)$.

When $(p,q)\neq (3,2)$, then $G\cong P\rtimes Q$.

If $G\cong \mathbb Z_{p^2}\rtimes\mathbb Z_{q^2}=
\langle a,b|a^{p^2}=b^{q^2}=1,bab^{-1}=a^i,i^{q^2}\equiv 1~(\text{mod}~p^2)\rangle$, then $H_1:=\langle a\rangle$,
$H_2:=\langle a^p\rangle$, $H_3:=\langle a,b^q\rangle$, $H_4:=\langle b\rangle$, $H_5:=\langle b^q\rangle$, $H_6:=\langle a^p,b^q\rangle$,
$H_7:=\langle a^p,b\rangle$, $H_8:=\langle a^pb\rangle$, $H_9:=\langle a^{2p}b\rangle$, $H_{10}:=\langle a^{3p}b\rangle$ are proper subgroups of $G$.
Here $H_1$, $H_2$, $H_3$ are normal in $G$.
% $H_2$ is a subgroup of $H_6$, $H_7$; $H_2H_4=H_7$;
% $H_2H_5=H_6$; $H_2H_8=H_2H_9=H_2H_{10}=H_7$.
It follows that $K_{3,7}$ is a subgraph of $\Gamma(G)$ with bipartition $X:=\{H_1$, $H_2$, $H_3\}$
and $Y:=\{H_4$, $H_5$, $H_6$, $H_7$, $H_8$, $H_9$, $H_{10}\}$. Therefore, $\gamma(\Gamma(G))>1$, $\overline{\gamma}(\Gamma(G))>1$.

If $G \cong\mathbb Z_{p^2}\rtimes(\mathbb Z_q\times\mathbb Z_q)$, then $H_1:=\langle a\rangle$, $H_2:=\langle a^p\rangle$,
$H_3:=\langle a,b\rangle$, $H_4:=\langle a,c\rangle$, $H_5:=\langle a^p,c\rangle$, $H_6:=\langle a^p,c\rangle$, $H_7:=\langle b\rangle$,
$H_8:=\langle c\rangle$, $H_9:=\langle b,c\rangle$, $H_{10}:=\langle a^p,b,c\rangle$ are proper subgroups of $G$, where
$\langle a\rangle=\mathbb Z_{p^2}$ and $\langle b,c\rangle=\mathbb Z_q\times \mathbb Z_q$. Here $H_1$, $H_3$, $H_4$ are normal
in $G$. It follows that $K_{3,7}$ is a subgraph of $\Gamma(G)$ with bipartition $X:=\{H_1$, $H_3$, $H_4\}$ and $Y:=\{H_2$, $H_5$, $H_6$, $H_7$, $H_8$, $H_9$,
$H_{10}\}$.
Therefore, $\gamma(\Gamma(G))>1$, $\overline{\gamma}(\Gamma(G))>1$.

If $G\cong(\mathbb Z_p\times\mathbb Z_p)\rtimes\mathbb Z_{q^2}$,
then $\mathbb Z_p\times\mathbb Z_p$ is a subgroup of $G$. If $p\geq 7$, then, by Proposition~\ref{genus 2}, $\gamma(\Gamma(\mathbb Z_p\times \mathbb Z_p))>1$ and $\overline{\gamma}(\Gamma(\mathbb Z_p\times \mathbb Z_p))>1$.
Therefore,
$\gamma(\Gamma(G))>1$, $\overline{\gamma}(\Gamma(G))>1$. If $p=5$, then $H:=\langle a,b\rangle=\mathbb Z_p\times \mathbb Z_p$, $H_1:=\langle a,b,c^q\rangle$ are proper normal subgroup of $G$,
where $\langle c\rangle=\mathbb Z_{q^2}$.
So, $H_1$, $H$, and the subgroups of $H$ form $K_8$ as a subgraph of $\Gamma(G)$. Therefore,
$\gamma(\Gamma(G))>1$, $\overline{\gamma}(\Gamma(G))>1$.

If $G\cong (\mathbb Z_p\times \mathbb Z_p)\rtimes (\mathbb Z_q\times \mathbb Z_q)$, then we can use the same argument as above by taking
$H:=\mathbb Z_p\times\mathbb Z_p$, $H_1:=\langle a,b,d\rangle$, where $\langle d\rangle=\mathbb Z_q$, so $\gamma(\Gamma(G))>1$, $\overline{\gamma}(\Gamma(G))>1$.

Now consider the case $(p$, $q)=(3$, $2)$. Up to isomorphism, there are nine groups of order 36. We investigate the toroidality and projective-planarity of permutability graphs of subgroups
for each of these nine groups.

\noindent\textbf{Case 1:} If $G\cong D_{18}$, then $H_1:=\langle a\rangle$, $H_2:=\langle a^2\rangle$, $H_3:=\langle a^3\rangle$, $H_4:=\langle a^6\rangle$,
$H_5:=\langle a^9\rangle$, $H_6:=\langle b\rangle$, $H_7:=\langle ba\rangle$, $H_8:=\langle ba^2\rangle$, $H_9:=\langle ba^3\rangle$,
$H_{10}:=\langle ba^4\rangle$
are subgroups of $G$. Here $H_1$, $H_2$, $H_3$ are normal in $G$, so they permute with all the subgroups of $G$. It follows that $K_{3,7}$ is
a subgraph of $\Gamma(G)$ with bipartition $X:=\{H_1$, $H_2$, $H_3\}$ and $Y:=\{H_4$, $H_5$, $H_6$, $H_7$, $H_8$, $H_9$, $H_{10}\}$, so $\gamma(\Gamma(G))>1$, $\overline{\gamma}(\Gamma(G))>1$.

\noindent\textbf{Case 2:} If $G\cong S_3\times S_3$, then $H_1:=S_3\times \{e\}$, $H_2:=\{e\}\times S_3$, $H_3:=\langle (123)\rangle\times S_3$,
$H_4:=\{e\}\times\langle (123)\rangle$,
$H_5:=\{e\}\times\langle (23)\rangle$, $H_6:=\langle (23)\rangle\times \{e\}$, $H_7:=\{e\}\times\langle (13)\rangle$, $H_8:=\langle (13)\rangle\times \{e\}$,
$H_9:=\langle (12)\rangle\times \{e\}$, $H_{10}:=\{e\}\times\langle (12)\rangle$ are proper subgroups of $G$. Here $H_1$, $H_2$, $H_3$ permute with all
the subgroups of $G$. It follows that $K_{3,7}$ is a subgraph of $\Gamma(G)$. Hence $\gamma(\Gamma(G))>1$, $\overline{\gamma}(\Gamma(G))>1$.

\noindent\textbf{Case 3:} If $G\cong \mathbb Z_3\times A_4$, then $H_1:=\mathbb Z_3\times \{e\}$, $H_2:=\{e\}\times A_4$, $H_3:=\{e\}\times \langle (12)(34)$, $(13)(24)\rangle$,
$H_4:=\mathbb Z_3\times \langle (12)(34)$, $(13)(24)\rangle$, $H_5:=\mathbb Z_3\times\langle (12)(34)\rangle$, $H_6:=\{e\}\times \langle (12)(34)\rangle$, $H_7:=\{e\}\times \langle (13)(24)\rangle$,
$H_8:=\{e\}\times \langle (14)(23)\rangle$,
$H_9:=\mathbb Z_3\times \langle (14)(23)\rangle$ are
subgroups of $G$. Here $H_1$, $H_2$, $H_3$, $H_4$ permute with $H_5$, $H_6$, $H_7$, $H_8$, $H_9$. It follows that $K_{4,5}$ is a
subgraph of $\Gamma(G)$. Therefore, $\gamma(\Gamma(G))>1$, $\overline{\gamma}(\Gamma(G))>1$.

\noindent\textbf{Case 4:} If $G\cong \mathbb Z_6\times S_3$, then $H_1:=\mathbb Z_6\times \{e\}$, $H_2:=\{e\}\times S_3$, $H_3 :=\mathbb Z_3\times S_3$,
$H_4:=\mathbb Z_2\times S_3$, $H_5:=\mathbb Z_2\times \{e\}$, $H_6:=\mathbb Z_3\times \{e\}$, $H_7:=\mathbb Z_3\times\langle (12)\rangle$,
$H_8:=\mathbb Z_2\times\langle (12)\rangle$, $H_9:=\{e\}\times\langle (12)\rangle$ are proper subgroups of $G$. Here $H_1$, $H_2$, $H_3$, $H_4$
permute with
$H_5$, $H_6$, $H_7$, $H_8$, $H_9$. It follows that $K_{4,5}$ is a subgraph of $\Gamma(G)$ with bipartition $X:=\{H_1$, $H_2$, $H_3$, $H_4\}$ and
$Y:=\{H_5$, $H_6$, $H_7$, $H_8$, $H_9\}$, so $\gamma(\Gamma(G))>1$, $\overline{\gamma}(\Gamma(G))>1$.

\noindent\textbf{Case 5:} If $G\cong \mathbb Z_9\rtimes\mathbb Z_4=\langle a,b~|~a^9=b^4=1, bab^{-1}=a^i, i^4\equiv 1(\mbox{mod}~ 9)\rangle$,
then $H_1:=\langle a\rangle$, $H_2:=\langle a^3\rangle$, $H_3:=\langle b\rangle$,
$H_4:=\langle b^2\rangle$, $H_5:=\langle a,b^2\rangle$, $H_6:=\langle a^3,b\rangle$, $H_7:=\langle a^3,b^2\rangle$, $H_8:=\langle ab,b^2\rangle$,
$H_9:=\langle a^2b,b^2\rangle$, $H_{10}:=\langle a^3b,b^2\rangle$ are proper subgroups of $G$.
Since $H_1$, $H_2$, $H_5$ are normal in $G$, and $H_4$ is a subgroup of $H_i$, $i \neq 1,2$, $K_{3,7}$ is a subgraph of $\Gamma(G)$ with bipartition
$X=\{H_1, H_2, H_5\}$ and $Y=\{H_3, H_4, H_6, H_7, H_8, H_9, H_{10}\}$. It follows that $\gamma(\Gamma(G))>1$, $\overline{\gamma}(\Gamma(G))>1$.

% they permutes with all the subgroups of $G$. $H_2$ is a subgroup of $H_6$, $H_7$;
% $H_2H_3=H_6$; $H_2H_4=H_7$; $H_3H_4=H_3$; $H_3H_6=H_6=H_3H_7$; $H_4$ is a subgroup of $H_6$, $H_7$; $H_7$ is a subgroup of $H_6$. Therefore,
% \begin{equation}\label{e7}
%  \Gamma(G)\cong K_7.
% \end{equation}

\noindent\textbf{Case 6:} If $G\cong \mathbb Z_3\times(\mathbb Z_3\rtimes \mathbb Z_4)=\langle a,b,c~|~a^3=b^3=c^4=1, ab=ba, ac=ca, cbc^{-1}=b^i,
\mbox{ord}_2(i)=3\rangle$, then $H_1:=\langle a\rangle$, $H_2:=\langle b\rangle$,
$H_3:=\langle c\rangle$, $H_4:=\langle c^2\rangle$, $H_5:=\langle bc\rangle$, $H_6:=\langle b^2c\rangle$, $H_7:=\langle a\rangle\times \langle b^2c\rangle$,
$H_8:=\langle a\rangle\times \langle b\rangle$, $H_9:=\langle a\rangle\times \langle c^2\rangle$ are proper subgroups of $G$. Here
$H_1$, $H_7$, $H_8$, $H_9$ permute with $H_2$, $H_3$, $H_4$, $H_5$, $H_6$. It follows that $K_{4,5}$ is a subgraph of $\Gamma(G)$ with bipartition
$X:=\{H_1$, $H_7$, $H_8$, $H_9\}$ and $Y:=\{H_2$, $H_3$, $H_4$, $H_5$, $H_6\}$. Therefore, $\gamma(\Gamma(G))>1$, $\overline{\gamma}(\Gamma(G))>1$.

\noindent\textbf{Case 7:} If $G\cong (\mathbb Z_3\times \mathbb Z_3)\rtimes \mathbb Z_4=\langle a,b,c~|~a^3=b^3=c^4=1, ab=ba, cac^{-1}=a^ib^j$,
$cbc^{-1}=a^kb^l\rangle$, where $\bigl(\begin{smallmatrix}
  i & j\\ k & l
\end{smallmatrix} \bigr)$ has order dividing $4$ in $GL_2(3)$, then we need to consider the following subcases.

\noindent\textbf{Subcase 7a:} Suppose $G$ has subgroups of order $pq^2$ and $pq$, where $p=3, q=2$. Then $H_1:=\langle a,b\rangle$, $H_2:=\langle a,b,c^2\rangle$, $H_3:=\langle a,c\rangle$, $H_4:=\langle a,c^2\rangle$, $H_5:=\langle b,c\rangle$, $H_6:=\langle b,c^2\rangle$, $H_7:=\langle c\rangle$,
$H_8:=\langle c^2\rangle$
, $H_9:=\langle a\rangle$, $H_{10}:=\langle b\rangle$ are proper subgroups of $G$. Here $H_1$ and $H_2$ are normal in $G$;
$H_8$ is a subgroup of $H_3$, $H_4$, $H_5$, $H_6$, $H_7$, $H_9$, $H_{10}$. It follows that $K_{3,7}$ is a subgraph of
$\Gamma(G)$, so $\gamma(\Gamma(G))>1$, $\overline{\gamma}(\Gamma(G))>1$.

\noindent\textbf{Subcase 7b:} If $G$ has no subgroups of order $pq^2$ or $pq$, where $p=3, q=2$, then
$G\cong (\mathbb Z_3\times \mathbb Z_3)\rtimes \mathbb Z_4=\langle a,b,c~|~a^3=b^3=c^4=1, ab=ba, cac^{-1}=b^{-1}$,
$cbc^{-1}=ab^l\rangle$, where $\bigl(\begin{smallmatrix}
  0 & -1\\ 1 & l
\end{smallmatrix} \bigr)$ has order dividing $4$ in $GL_2(3)$. Here $G$ has unique subgroups of order $p^2$ and $p^2q$, say $H$ and $N$, respectively, $p^2$ subgroups of order $q^2$, denoted by $A_i$, $i=1,\ldots,9$, and $p^2$ subgroups of order $q$, denoted by $B_i$, $i=1,\ldots,9$. Moreover, $H$ has four subgroups of order $p$, denoted by $H_1$, $H_2$, $H_3$, $H_4$. These are the only subgroups of $G$. It follows that
\begin{equation}\label{e1111}
\Gamma(G)\cong K_2+(K_4\cup 9K_2),
 \end{equation}
 which is both toroidal and projective-planar. The corresponding toroidal and projective-planar embeddings are shown in Figures~\ref{fig:8} and~\ref{fig:9}, respectively.

\begin{figure}[ ht ]
 \begin{center}
 \includegraphics[scale=1]{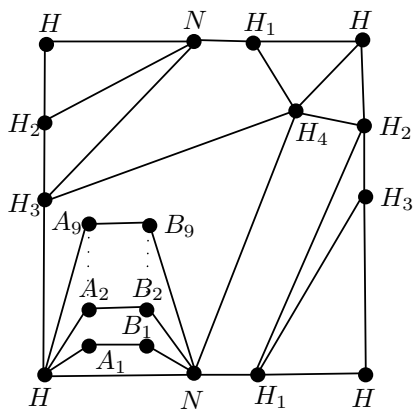}
 \caption{A torodial embedding of $\Gamma((\mathbb Z_3\times \mathbb Z_3))\rtimes \mathbb Z_4)$.\label{fig:8}}
 \end{center}
 \end{figure}

 \begin{figure}[ ht ]
 \begin{center}
 \includegraphics[scale=1]{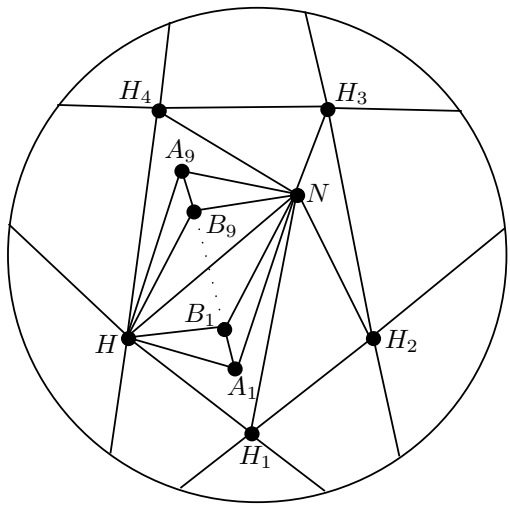}
 \caption{A projective-planar embedding of $\Gamma((\mathbb Z_3\times \mathbb Z_3)\rtimes \mathbb Z_4)$.\label{fig:9}}
 \end{center}
 \end{figure}

% then $H_1:=\langle a,b\rangle$, $H_2:=\langle a,b,c^2\rangle$,
%, $H_3:=\langle a,c\rangle$, $H_4:=\langle a,c^2\rangle$, $H_5:=\langle b,c\rangle$, $H_6:=\langle b,c^2\rangle$, $H_7:=\langle c\rangle$,
%$H_8:=\langle c^2\rangle$
%, $H_9:=\langle a\rangle$, $H_{10}:=\langle b\rangle$ are proper subgroup of $G$. Here $H_1$ and $H_2$ are normal in $G$;
%$H_8$ is a subgroup of $H_3$, $H_4$, $H_5$, $H_6$, $H_7$, $H_9$, $H_{10}$. It follows that $K_{3,7}$ is a subgraph of
%$\Gamma(G)$, so $\gamma(\Gamma(G))>1$.

\noindent\textbf{Case 8:} If $G\cong \mathbb Z_2\times((\mathbb Z_3\times \mathbb Z_3)\rtimes \mathbb Z_2)=\langle a,b,c,d~|~a^2=b^3=c^3=d^2=1$,
$ab=ba, ac=ca, ad=da, bc=cb, dbd^{-1}=b^ic^j, dcd^{-1}=b^kc^l\rangle$, where $\bigl(\begin{smallmatrix}
  i & j\\ k & l
\end{smallmatrix} \bigr)$ has order $2$ in $GL_2(3)$, then $H_1:=\langle a,b,c\rangle$,
 $H_2:=\langle b,c,d\rangle$, $H_3:=\langle b\rangle$, $H_4:=\langle a,b\rangle$, $H_5:=\langle a,c\rangle$, $H_6:=\langle a,b,d\rangle$,
$H_7:=\langle b,c\rangle$, $H_8:=\langle b,d\rangle$, $H_9:=\langle c,d\rangle$, $H_{10}:=\langle a\rangle$ are proper subgroups of $G$. Here
$H_1$ and $H_2$ are normal in $G$; $H_3$ is a subgroup of $H_4$, $H_6$, $H_7$, $H_8$; $H_3H_5=H_1$;
$H_3H_9=H_2$; $H_3H_{10}=H_4$. It follows that $K_{3,7}$ is a subgraph of $\Gamma(G)$ with bipartition $X:=\{H_1$, $H_2$, $H_3\}$ and
$Y:=\{H_4$, $H_5$, $H_6$, $H_7$, $H_8$, $H_9$, $H_{10}\}$. Therefore, $\gamma(\Gamma(G))>1$, $\overline{\gamma}(\Gamma(G))>1$.

\noindent\textbf{Case 9:} If $G\cong (\mathbb Z_2\times \mathbb Z_2)\rtimes \mathbb Z_9$, then $H_1:=\langle a,c\rangle$, $H_2:=\langle b,c\rangle$,
$H_3:=\langle a,b\rangle$, $H_4:=\langle a\rangle$, $H_5:=\langle b\rangle$, $H_6:=\langle c\rangle$, $H_7:=\langle c^2\rangle$,
$H_8:=\langle a,c^2\rangle$, $H_9:=\langle b,c^2\rangle$, $H_{10}:=\langle a,b,c^2\rangle$ are proper subgroups of $G$, where
$\langle a,b\rangle=\mathbb Z_2\times \mathbb Z_2$, $\langle c\rangle=\mathbb Z_9$. Here $H_1$, $H_2$, $H_3$ are
normal in $G$, so we have $K_{3,7}$ as a subgraph in $\Gamma(G)$ with bipartition
$X:=\{H_1$, $H_2$, $H_3\}$ and $Y:=\{H_4$, $H_5$, $H_6$, $H_7$, $H_8$, $H_9$, $H_{10}\}$. Therefore, $\gamma(\Gamma(G))>1$, $\overline{\gamma}(\Gamma(G))>1$.

Combining all the cases together, the result follows.
\end{proof}

\begin{pro}\label{genus 10}
If $G$ is a non-abelian group of order $p^\alpha q^\beta$, where $p,q$ are distinct primes, $\alpha$, $\beta\geq 2$, and $\alpha+\beta\geq5$, then
$\gamma(\Gamma(G))>1$, $\overline{\gamma}(\Gamma(G))>1$.
\end{pro}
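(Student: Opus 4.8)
The plan is to reduce to smaller subgroups whose permutability graphs are already known to be non-toroidal and non-projective-planar, exactly in the spirit of Propositions~\ref{genus 5}, \ref{genus 8}, and \ref{genus 9}. Recall the basic monotonicity principle used throughout: if $H\le G$ then $\Gamma(H)$ is (isomorphic to) an induced subgraph of $\Gamma(G)$, so $\gamma(\Gamma(H))>1$ forces $\gamma(\Gamma(G))>1$, and likewise for $\overline{\gamma}$. Hence it suffices to exhibit, inside every non-abelian group $G$ of order $p^\alpha q^\beta$ with $\alpha,\beta\ge 2$ and $\alpha+\beta\ge 5$, a subgroup whose permutability graph is known to have genus and nonorientable genus both greater than one.

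The main case split is on the Sylow subgroups $P$ and $Q$ of $G$, of orders $p^\alpha$ and $q^\beta$. Without loss of generality $\alpha\ge 3$ (since $\alpha+\beta\ge 5$ and $\alpha,\beta\ge 2$). First I would handle the situation where $G$ has a non-abelian Sylow subgroup: if $P$ is non-abelian of order $p^\alpha$ with $\alpha\ge 3$, then Proposition~\ref{genus 5} already gives $\gamma(\Gamma(P))>1$ and $\overline{\gamma}(\Gamma(P))>1$, and we are done; symmetrically if $Q$ is non-abelian (which needs $\beta\ge 3$). Next, if both Sylow subgroups are abelian but $P$ contains a subgroup isomorphic to $\mathbb{Z}_{p^k}\times\mathbb{Z}_{p^l}$ with $k,l\ge 2$, or $\mathbb{Z}_{p^2}\times\mathbb{Z}_p$ with $p\ge 3$, or $\mathbb{Z}_p\times\mathbb{Z}_p\times\mathbb{Z}_p$, then Proposition~\ref{genus 2} (Subcases~2b--2g) finishes it; the only abelian $p$-groups of order $p^\alpha$, $\alpha\ge 3$, not covered this way are $\mathbb{Z}_{p^\alpha}$ and $\mathbb{Z}_4\times\mathbb{Z}_2\times\mathbb{Z}_2$-type small exceptions — more precisely, after this filtering we may assume $P\cong\mathbb{Z}_{p^\alpha}$ (with $\alpha\ge 3$) or $P\cong\mathbb{Z}_4\times\mathbb{Z}_2$ (the case $\alpha=3$, $p=2$), and similarly for $Q$.

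The remaining core of the argument is when $G$ has a cyclic or ``small'' Sylow subgroup and is non-abelian, so some non-trivial action is present. Here I would pass to a subquotient of smaller order of the form $p^{\alpha'}q^{\beta'}$ with $\alpha'+\beta'<\alpha+\beta$ that is still non-abelian: concretely, since $G$ is solvable, it has a normal subgroup $N$ of index $p$ or $q$, and $|N|=p^{\alpha-1}q^\beta$ or $p^\alpha q^{\beta-1}$; if $N$ is non-abelian we invoke Proposition~\ref{genus 8} (when $\min(\alpha-1,\beta)=1$ or $\min(\alpha,\beta-1)=1$) or the induction hypothesis (when $N$ still has both exponents $\ge 2$ and the sum $\ge 5$), and conclude. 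The delicate residual situation is when every index-$p$ and index-$q$ normal subgroup of $G$ is abelian; then $G$ is metabelian of a very restricted shape, and I would argue directly: writing $G = N\rtimes \mathbb{Z}_p$ (say) with $N$ abelian of order $p^{\alpha-1}q^\beta$, the abelian group $N$ itself has $\gamma(\Gamma(N))>1$ by Proposition~\ref{genus 2} unless $N$ is cyclic or one of the tiny exceptional abelian groups, and in those surviving sub-subcases one explicitly lists the normal subgroups of $G$ (the subgroups of $N$ of each divisor order, which are normal since characteristic in $N\trianglelefteq G$) together with enough Sylow $p$-subgroups and products to produce either a $K_{3,7}$, a $K_{4,5}$, a $K_8$, or the obstruction $\mathcal{A}_1$ of Figure~\ref{fig:8vertex} as a subgraph of $\Gamma(G)$ — exactly the four tools used in Propositions~\ref{genus 7}--\ref{genus 9}. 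The main obstacle is precisely this bookkeeping: one must verify that in each surviving arithmetic sub-subcase (there are only finitely many shapes for $N$, driven by $\alpha+\beta\ge5$) there really are at least three mutually-permuting normal subgroups and at least seven further subgroups each permuting with all of them, or an eighth subgroup completing a $K_8$; I expect the normality of all divisor-order subgroups of the cyclic or near-cyclic part, plus the observation that a subgroup of order divisible by every Sylow is automatically the whole group (giving the needed permutability edges with the complement), to make every such verification short.
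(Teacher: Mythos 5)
Your overall strategy coincides with the paper's: induct on $\alpha+\beta$, pass to a normal subgroup $N$ of prime index, dispose of $N$ via the earlier propositions when possible, and build an explicit obstruction ($K_8$, $K_{3,7}$, $K_{4,5}$, or $\mathcal{A}_1$) in the residual cases. However, your case analysis for non-abelian $N$ has a concrete hole. In the base case $\alpha+\beta=5$ the index-$q$ normal subgroup has order $p^2q^2$, and neither Proposition~\ref{genus 8} (which needs one exponent equal to $1$ and the other at least $3$) nor your induction hypothesis (which needs exponent sum at least $5$) applies to it; you need Proposition~\ref{genus 9}, whose conclusion is \emph{not} that every such non-abelian group is eliminated --- the group $(\mathbb Z_3\times \mathbb Z_3)\rtimes \mathbb Z_4$ survives with a toroidal and projective-planar graph, so it must be killed by a separate argument (the paper adjoins the normal vertex $N$ to $\Gamma(N)\cong K_2+(K_4\cup 9K_2)$ and extracts a $K_{3,7}$ inside $\Gamma(G)$).

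Second, the verifications you defer with ``I expect \dots to make every such verification short'' are exactly where the substance lies, and at least one of them does not follow from counting normal subgroups of $N$ alone. When $N\cong\mathbb Z_{pq^3}$ (cyclic, index $p$, base case), $N$ has only six proper subgroups, so $N$ together with all of them yields only a $K_7$, which is toroidal; no $K_{3,7}$, $K_{4,5}$ or $K_8$ arises from $N$'s subgroup lattice. The paper must adjoin an extra subgroup $H=\langle P,H_2\rangle$ built from a Sylow $p$-subgroup of $G$, determine which of the seven existing vertices it permutes with, and recognize the resulting configuration as containing the torus obstruction $\mathcal{A}_1$. Your plan correctly anticipates that $\mathcal{A}_1$ may be needed, but without exhibiting this extra subgroup and its permutability relations the argument is incomplete at precisely the one point where the generic tools fail. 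The remaining residual case, $N\cong\mathbb Z_{p^2q^2}$, is genuinely easy ($N$ plus its seven proper subgroups give $K_8$), so once the two items above are filled in, your proof matches the paper's.
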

\begin{proof}
We prove this result by induction on $\alpha+\beta$. If $\alpha+\beta=5$, then $|G|=p^2q^3$. Since $G$ is solvable,
it has a normal subgroup $N$ of prime index.

\noindent\textbf{Case 1:} If $[G:N]=q$, then $|N|=p^2q^2$. If $\gamma(\Gamma(N))>1$, $\overline{\gamma}(\Gamma(N))>1$, then $\gamma(\Gamma(G))>1$, $\overline{\gamma}(\Gamma(G))>1$.
By Propositions~\ref{genus 2} and \ref{genus 9}, here $N\cong \mathbb Z_{p^2q^2}$ or $(\mathbb Z_3\times \mathbb Z_3)\rtimes \mathbb Z_4$.
If $N\cong \mathbb Z_{p^2q^2}$, then $N$ together with its proper subgroups form $K_8$ as a subgraph of $\Gamma(G)$. If $N\cong (\mathbb Z_3\times \mathbb Z_3)\rtimes \mathbb Z_4$, then, by \eqref{e1111}, the subgraph generated by $N$ and its proper subgroups in $\Gamma(G)$ contains $K_{3,7}$ as a subgraph.
It follows that $\gamma(\Gamma(G))>1$, $\overline{\gamma}(\Gamma(G))>1$.

\noindent\textbf{Case 2:} If $[G:N]=p$, then $|N|=pq^3$. If $\gamma(\Gamma(N))>1$, $\overline{\gamma}(\Gamma(N))>1$, then $\gamma(\Gamma(G))>1$, $\overline{\gamma}(\Gamma(G))>1$. By Propositions~\ref{genus 2} and
\ref{genus 8}, here $N\cong \mathbb Z_{pq^3}$. Let $H_1$, $H_2$, $H_3$, $H_4$, $H_5$, $H_6$ be the subgroups of $N$ of order $p$, $q$, $q^2$, $q^3$, $pq$, $pq^2$, respectively. Let $P$ be a Sylow $p$-subgroup of $G$ containing $H_1$. Consider the subgroup $H:=\langle P, H_2\rangle$ of $G$. Here $N$ together with its proper subgroups forms $K_7$ as a subgraph of $\Gamma(G)$. Also,
$HH_4=G$, $H_1$, $H_2$ are subgroups of $H$. It follows that these subgroups form a subgraph in $\Gamma(G)$, which is isomorphic to $\mathcal A_1$ of Figure~\ref{fig:8vertex},
so $\gamma(\Gamma(G))>1$, $\overline{\gamma}(\Gamma(G))>1$.

Now assume that $\alpha +\beta > 5$ and the result is
true for all the non-abelian group  of order $p^mq^n$, where $m+n<\alpha+\beta$ ($m+n\geq 5$, $m,n\geq 2$). We prove this result for $\alpha+\beta$. Since $G$ is solvable, then $G$ has  a normal
subgroup $H$ with a prime index, say $q$, and so $|H|=p^\alpha q^{\beta-1}$. If $H$ is abelian, then by Proposition~\ref{genus 2},
$\gamma(\Gamma(H))>1$, $\overline{\gamma}(\Gamma(H))>1$. If $H$ is non-abelian, then we have the following cases:

\noindent \textbf{Case a:} If $\beta=2$, then $\alpha>2$, and by Proposition~\ref{genus 8}, $\gamma(\Gamma(H))>1$, $\overline{\gamma}(\Gamma(H))>1$.

\noindent \textbf{Case b:} If $\beta>2$, then, by the induction hypothesis, $\gamma(\Gamma(H))>1$, $\overline{\gamma}(\Gamma(H))>1$.

\noindent \textbf{Case c:} If $\alpha=2$, then $\beta >3$, and by Case b, $\gamma(\Gamma(H))>1$, $\overline{\gamma}(\Gamma(H))>1$.

\noindent \textbf{Case d:} If $\alpha>2$, then, by the induction hypothesis, $\gamma(\Gamma(H))>1$, $\overline{\gamma}(\Gamma(H))>1$.

It follows that $\gamma(\Gamma(G))>1$, $\overline{\gamma}(\Gamma(G))>1$.
Combining all the cases together completes the proof.
\end{proof}

% Let $G$ be finite group, and let $p_1 , p_2 ,\ldots , p_k$ be the distinct prime factors of $|G|.$ Recall that a \textit{Sylow basis} for $G$ is a
% collection $P_1, P_2 ,\ldots , P_k$ such that
% $P_i$ is a Sylow $p_i$-subgroup of $G$ for each $i$ and such that $P_i P_j = P_j P_i$ for all $i$ and $j$.

\begin{pro}\label{genus 11}
If $G$ is a non-abelian solvable group of order $p^\alpha q^\beta r^\gamma$, where $p,q,r$ are distinct primes, then $\gamma(\Gamma(G))>1$, $\overline{\gamma}(\Gamma(G))>1$.
\end{pro}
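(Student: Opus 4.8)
The plan is to show that $\Gamma(G)$ always contains one of the graphs $K_8$, $K_{4,5}$, $K_{3,7}$, or one of the torus obstructions of Figures~\ref{fig:8vertex} and \ref{fig:noK33} as a subgraph; since each of these also contains $K_{3,5}$ (which has nonorientable genus two by Theorem~\ref{genus 100}) or is itself non-projective-planar, this yields $\gamma(\Gamma(G))>1$ and $\overline{\gamma}(\Gamma(G))>1$ simultaneously, and with no exceptions (unlike Propositions~\ref{genus 7} and \ref{genus 9}). I will use throughout the elementary fact that whether two subgroups of $H$ permute does not depend on the ambient group, so $\Gamma(H)$ is a subgraph of $\Gamma(G)$ for every $H\le G$; in particular it suffices to exhibit a proper subgroup $H$ with $\gamma(\Gamma(H))>1$ and $\overline{\gamma}(\Gamma(H))>1$, or else to argue directly inside $G$.

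First I would argue by induction on $|G|$. Since $G$ is solvable, $G/[G,G]$ is a nontrivial finite abelian group, so $G$ has a normal subgroup $M$ of prime index $s\in\{p,q,r\}$; moreover $G$ has Hall $\{p,q\}$-, $\{p,r\}$- and $\{q,r\}$-subgroups, and as $G$ is non-abelian at least one of them is non-abelian. If either $M$ or a non-abelian Hall subgroup still has three distinct prime divisors, the induction hypothesis finishes the case. Otherwise such a subgroup has exactly two prime divisors, say of order $p^{\alpha}q^{\beta}$, and Propositions~\ref{genus 2}, \ref{genus 5}, \ref{genus 7}, \ref{genus 8}, \ref{genus 9}, \ref{genus 10} together with the fact that a non-abelian group of order $pq$ has $\Gamma\cong K_{1,q}$ give $\gamma(\Gamma)>1$ and $\overline{\gamma}(\Gamma)>1$, \emph{unless} this subgroup lies in one of the short exceptional lists there: a non-abelian group of order $pq$, a toroidal or planar abelian group from Proposition~\ref{genus 2}, or one of $\mathbb Z_3\rtimes\mathbb Z_4$, $\mathbb Z_5\rtimes\mathbb Z_4$, the order $p^2q$ family with $p=3,5$, or $(\mathbb Z_3\times\mathbb Z_3)\rtimes\mathbb Z_4$. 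In every non-exceptional situation we are done, and since the exceptional subgroups have bounded order, they force $|G|$ into a finite list of small orders (multiples of $pqr$, $12$, $20$, $36$, $45$, or $75$ by one further prime).

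It remains to treat these finitely many base orders directly, and this is the bulk of the work. For them one can invoke the known classifications (groups of order $p^2q$, of order $p^2q^2$, and of order $pqr$, the last being metacyclic, $\mathbb Z_m\rtimes\mathbb Z_n$ with $mn=pqr$), together with the normal subgroups that solvability always supplies — a normal Sylow subgroup for the largest prime and, iterating, a normal subgroup of order equal to the product of the two largest prime powers. These normal subgroups, and the subgroups they contain, give several mutually permuting (indeed dominating) vertices of $\Gamma(G)$, and combining them with the Sylow $q$-subgroups lying in the normal $\{q,r\}$-part and with the subgroups of order divisible by $r$ produced by the smallest prime, one checks that $\Gamma(G)$ contains $K_{3,7}$, $K_{4,5}$, or $K_8$. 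For instance, in the tightest sub-case $|G|=pqr$ with $p<q<r$ and only two proper nontrivial normal subgroups $R$ (order $r$) and $N$ (order $qr$), the quotient $G/R$ must be non-abelian and $N$ must be non-abelian, forcing $q\ge 3$ and $r\ge 7$; then $G$ has $q$ subgroups $M_1,M_2,M_3$ of order $pr$ and $r\ge 7$ Sylow $q$-subgroups $Q_1,\ldots,Q_r$, with $M_iR=M_i$, $M_iN=M_iQ_j=G$ for all $i,j$, so $\{M_1,M_2,M_3\}$ and $\{R,N,Q_1,\ldots,Q_5\}$ are the parts of a $K_{3,7}$.

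The main obstacle is precisely this final step: the case analysis over the bounded list of exceptional orders must be carried out carefully, since $\Gamma(G)$ can be comparatively sparse when $G$ has few subgroups, so one must verify in each case that a suitable collection of subgroups genuinely realises one of $K_{3,7}$, $K_{4,5}$, $K_8$ (or, failing that, a copy of $\mathcal A_1$). Once such a subgraph is located, nothing further is needed: these graphs are non-toroidal by Theorem~\ref{genus 100} and the remarks following it, and each contains $K_{3,5}$, so $\overline{\gamma}(\Gamma(G))>1$ as well. Combining the inductive step with the base cases then completes the proof.
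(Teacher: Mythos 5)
Your overall strategy --- locate one of $K_{3,7}$, $K_{4,5}$, $K_8$ or the obstruction $\mathcal A_1$ inside $\Gamma(G)$, after first reducing to Hall subgroups with at most two prime divisors via Propositions~\ref{genus 2}--\ref{genus 10} --- is the same as the paper's, and your worked sub-case for $|G|=pqr$ is correct as far as it goes (the products $M_iQ_j$ exhaust $G$ by the order count, so the relevant pairs do permute). But there is a genuine gap: the residual cases you defer are not ``a finite list of small orders''. After your reduction, what survives includes \emph{every} group of order $pqr$ (its non-abelian two-prime Hall subgroup has permutability graph $K_{1,q}$, which is planar for every $q$, so the reduction never resolves this case), every group of order $p^2qr$ or $p^3qr$ whose two-prime Hall subgroups are abelian or exceptional, and families such as $p^2q\cdot r^{\gamma}$ with $p=3,5$ and $q$, $r$ arbitrary. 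These are infinite families parametrized by primes, and disposing of them is precisely the content of the paper's proof: its Case~1 runs through the possible configurations of $n_p$, $n_q$ for $|G|=pqr$ using the classification of Cole and Glover \cite{cole} (your single configuration, in which none of $P$, $Q$, $PQ$, $PR$ is normal, is only one of these, and the subcase $n_p\neq 1$, $n_q=1$ genuinely needs Cole's count of $q$ subgroups of order $pq$ or $pr$); its Cases~2 and~3 build an explicit copy of $\mathcal A_1$ from $P$, $Q$, $R$, $QR$, $PQ$, $PR$ and two nested subgroups inside $P$ when $|G|=p^2qr$ or $p^3qr$; and its Case~4 carries out your intended reduction to $H=PQ$ when $\alpha+\beta+\gamma\ge 5$. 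So the skeleton is right, but the step you yourself label ``the main obstacle'' is the proof, and it is missing.

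A smaller point: your opening claim that each target subgraph ``contains $K_{3,5}$ \ldots\ or is itself non-projective-planar'' is not justified for the obstructions $\mathcal B_1,\ldots,\mathcal B_4$ of Figure~\ref{fig:noK33}; being an obstruction for the torus does not imply non-projective-planarity, which is why the paper, where it does use $\mathcal B_4$ (in Proposition~\ref{genus 7}), exhibits a separate projective-plane obstruction to settle $\overline{\gamma}>1$. In the present proposition this is harmless, since only $K_{3,7}$, $K_{4,5}$, $K_8$ and $\mathcal A_1$ are actually needed and each of these does contain $K_{3,5}$, but the blanket statement as written would not survive if one of the $\mathcal B_i$ were the subgraph found.
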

\begin{proof}
Since $G$ is solvable, it has a Sylow basis $\{P$, $Q$, $R\}$, where $P$, $Q$, $R$ are Sylow $p$, $q$, $r$-subgroups of $G$, respectively. We split the proof into several cases.\\
\noindent\textbf{Case 1:} If $\alpha=\beta=\gamma=1$, then consider the following subcases. Without loss of generality, we assume that $p<q<r$. Here the Sylow $r$-subgroup of $G$ is always unique, i.e. $n_r=1$.\\
\noindent\textbf{Subcase 1a:} Suppose $n_p=n_q=1$. Then
$G$ is abelian, which is not possible.

\noindent\textbf{Subcase 1b:} $n_p\neq 1$ and $n_q=1$.
Let $P_1$, $P_2$, $P_3$ be Sylow $p$-subgroups of $G$. Here $Q$, $R$ are normal in $G$, and so $QR$ is also normal in $G$. By \cite[pp.\,216--219]{cole}, $G$ has $q$ subgroups  either of order $pq$ or $pr$. If $G$ has $q$ subgroups of order $pq$, then
$\Gamma(G)$ contains $K_{3,7}$ as a subgraph with bipartition $X:=\{Q$, $R$, $QR\}$ and
$Y:=\{P_1$, $P_2$, $P_3$, $QP_1$, $QP_2$, $QP_3$, $RP_1\}$. If $G$ has $q$ subgroups of order $pr$, then
$\Gamma(G)$ contains $K_{3,7}$ as a subgraph with bipartition $X:=\{Q$, $R$, $QR\}$ and
$Y:=\{P_1$, $P_2$, $P_3$, $RP_1$, $RP_2$, $RP_3$, $QP_1\}$. Therefore, $\gamma(\Gamma(G))>1$, $\overline{\gamma}(\Gamma(G))>1$.

\noindent\textbf{Subcase 1c:} $n_p\neq 1$ and $n_q\neq 1$. Let $P_1$, $P_2$, $P_3$ be Sylow $p$-subgroups of $G$, and
$Q_1$, $Q_2$, $Q_3$ be Sylow $q$-subgroups of $G$. Here $R$ is normal in $G$. By \cite[pp.\,219--220]{cole}, $G$ has $q$ subgroups of order $pq$ (denote one of them by $H_1$) and unique normal subgroups of order $qr$ and $pr$, say $H_2$ and $H_3$, respectively. It follows that $\Gamma(G)$ contains $K_{3,7}$ as a subgraph with
bipartition $X:=\{R$, $H_2$, $H_3\}$ and $Y:=\{P_1$, $P_2$, $P_3$, $Q_1$, $Q_2$, $Q_3$, $H_1\}$, so $\gamma(\Gamma(G))>1$, $\overline{\gamma}(\Gamma(G))>1$.

%\noindent\textbf{Subcase 1d:} $n_p\neq 1$, $n_q\neq 1$, $n_r\neq 1$.
%Let $P_1$, $P_2$, $P_3$ be Sylow $p$-subgroups of $G$ and
%$Q_1$, $Q_2$, $Q_3$ be Sylow $q$-subgroups of $G$.
%Since $G$ has a normal subgroup, say $R$ of order $r$, $Q_1R$, $Q_2R$, $Q_3R$ are also normal subgroups of $G$.
%It follows that $\Gamma(G)$ contains $K_{3,7}$ as a subgraph with
%bipartition $X:=\{Q_1R$, $Q_2R$, $Q_3R\}$ and $Y:=\{P_1$, $P_2$, $P_3$, $Q_1$, $Q_2$, $Q_3$, $R\}$, so $\gamma(\Gamma(G))>1$, $\overline{\gamma}(\Gamma(G))>1$.

\noindent\textbf{Case 2:} If $\alpha=2$ and $\beta=\gamma=1$, then $H_1:=PQ$, $H_2:=PR$ are two proper subgroups of $G$ of order $p^2q$ and $p^2r$, respectively.
Here $P$, $Q$, $R$, $QR$, $H_1$, $H_2$ permute with each other.
If $P \cong Z_{p^2}$, then $G$ has subgroups of order $p$ and $pq$, say $H_3$ and $H_4$, respectively. Here $H_3$ is a subgroup of $H_4$, and they permute with $P$, $H_1$, $H_2$. If $P \cong Z_p \times Z_p$, then $P$ has at least two subgroups of order $p$, say $H_3$, $H_4$. Here $H_3$, $H_4$ permute with each other and $H_1$, $H_2$.
It follows that these subgroups form a subgraph in $\Gamma(G)$ isomorphic to $\mathcal A_1$ of Figure~\ref{fig:8vertex}.
So, $\gamma(\Gamma(G))>1$, $\overline{\gamma}(\Gamma(G))>1$.

\noindent\textbf{Case 3:} If $\alpha=3$ and $\beta=\gamma=1$, then $H_1=PQ$, $H_2=PR$ are two proper subgroups of $G$ of order $p^3q$ and $p^3r$, respectively.
Here $P$, $Q$, $R$, $QR$, $H_1$, $H_2$ permute with each other; also,
$H_1$ has subgroups of order $p$, $p^2$, say $H_3$, $H_4$, respectively. However, $H_3$ is a subgroup of $H_4$; they permute with $H_1$, $H_2$, $P$. It follows that these subgroups form a subgraph in $\Gamma(G)$ isomorphic to $\mathcal A_1$ of Figure~\ref{fig:8vertex}.
So, $\gamma(\Gamma(G))>1$, $\overline{\gamma}(\Gamma(G))>1$.
% given in\cite[Figure 4, p. 578]{pra}

\noindent\textbf{Case 4:} Suppose $\alpha+\beta+\gamma\geq 5$, and, without loss of generality, we assume that $\alpha\geq\beta\geq\gamma$. Consider the subgroup $H:=PQ$ of $G$ of
order $p^\alpha q^\beta$.
If $\gamma(\Gamma(H))>1$, $\overline{\gamma}(\Gamma(H))>1$, then $\gamma(\Gamma(G))>1$, $\overline{\gamma}(\Gamma(G))>1$. By Propositions~\ref{genus 2} and \ref{genus 9}, we have here $H\cong \mathbb Z_{p^2q^2}$ or $(\mathbb Z_3\times \mathbb Z_3)\rtimes \mathbb Z_4$. If $H\cong \mathbb Z_{p^2q^2}$,
then, by~Lemma~\ref{l1},
$H$ together with its subgroups form $K_8$ as a
subgraph, so $\gamma(\Gamma(G))>1$, $\overline{\gamma}(\Gamma(G))>1$. If $H\cong (\mathbb Z_3\times \mathbb Z_3)\rtimes \mathbb Z_4$, then, by \eqref{e1111}, the subgraph generated by $H$ and its proper subgroups in $\Gamma(G)$ contains $K_{3,7}$ as a subgraph.
It follows that, $\gamma(\Gamma(G))>1$, $\overline{\gamma}(\Gamma(G))>1$.
%The result follows from the above cases.
\end{proof}

\begin{pro}\label{genus 12}
 If $G$ is a solvable group whose order has more than three distinct prime factors, then $\gamma(\Gamma(G))>1$, $\overline{\gamma}(\Gamma(G))>1$.
\end{pro}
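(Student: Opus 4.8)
The plan is to produce $K_8$ (in fact a far larger complete graph) as a subgraph of $\Gamma(G)$, and then conclude via Theorem~\ref{genus 100} and the remark following it, which gives $\gamma(K_n)>1$ for $n\ge 8$ and $\overline{\gamma}(K_n)>1$ for $n\ge 7$. The source of the clique is Hall's theory of Sylow systems in finite solvable groups, already invoked in Proposition~\ref{genus 11}.

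Write $|G|=p_1^{\alpha_1}p_2^{\alpha_2}\cdots p_k^{\alpha_k}$ with the $p_i$ distinct primes and $k\ge 4$. The first step is to fix a Sylow basis $\{P_1,\dots,P_k\}$ of $G$: a family of Sylow $p_i$-subgroups that pairwise permute, with the property that for every $\pi\subseteq\{1,\dots,k\}$ the product $P_\pi:=\prod_{i\in\pi}P_i$ is a Hall $\pi$-subgroup, and $P_\pi\cap P_\tau=P_{\pi\cap\tau}$, $P_\pi P_\tau=P_{\pi\cup\tau}$. From these identities one reads off that any two members of $\{P_\pi\}$ permute: $P_\pi P_\tau$ is contained, as a set, in the subgroup $P_{\pi\cup\tau}$, while $|P_\pi P_\tau|=|P_\pi|\,|P_\tau|/|P_\pi\cap P_\tau|=|P_{\pi\cup\tau}|$, so $P_\pi P_\tau=P_{\pi\cup\tau}$ is a subgroup.

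Next I restrict attention to four of the primes, say $p_1,p_2,p_3,p_4$, and consider the subgroups $P_\pi$ with $\es\neq\pi\subseteq\{1,2,3,4\}$ and $|\pi|\le 3$. There are $4+6+4=14$ such subsets. Each corresponding $P_\pi$ is a proper subgroup of $G$, since it is a Hall subgroup for a set of at most three primes whereas $|G|$ has at least four prime divisors; they are pairwise distinct, their orders $\prod_{i\in\pi}p_i^{\alpha_i}$ being distinct by unique factorization; and by the previous paragraph they pairwise permute. Hence these subgroups induce a copy of $K_{14}$ in $\Gamma(G)$. In particular $\Gamma(G)\supseteq K_8$, so $\gamma(\Gamma(G))\ge\gamma(K_8)>1$ and $\overline{\gamma}(\Gamma(G))\ge\overline{\gamma}(K_8)>1$ by Theorem~\ref{genus 100}, which is the claim.

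The argument is essentially routine once the Sylow-system machinery is available; the only point requiring care is the verification that the whole family of Hall subgroups $\{P_\pi\}$ — not merely the Sylow subgroups $P_i$ — is pairwise permuting, which is exactly what the identities $P_\pi P_\tau=P_{\pi\cup\tau}$ deliver. (One could instead try to reduce to earlier results via a Hall $\{p_1,p_2,p_3\}$-subgroup $H$: Proposition~\ref{genus 11} handles $H$ non-abelian, and Proposition~\ref{genus 2} forces abelian $H$ to be $\mathbb{Z}_{p_1p_2p_3}$ or a group already known to have $\gamma,\overline{\gamma}>1$, the residual cyclic case being covered by the clique above; but the direct Sylow-system approach is cleaner and uniform, so I would present that.)
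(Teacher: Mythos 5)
Your proof is correct and follows essentially the same route as the paper: fix a Sylow basis, observe that the resulting Hall subgroups pairwise permute, and extract a $K_8$ (the paper uses the eight subgroups $P$, $Q$, $R$, $S$, $PQR$, $PQS$, $PRS$, $QRS$ rather than your fourteen $P_\pi$ with $|\pi|\le 3$, but this is immaterial). Your justification that $P_\pi P_\tau=P_{\pi\cup\tau}$ is a subgroup is a welcome extra detail the paper leaves implicit.
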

\begin{proof}
Since $G$ is solvable, it has a Sylow basis containing $P$, $Q$, $R$, $S$, where $P$, $Q$, $R$, $S$ are Sylow $p$, $q$, $r$, $s$-subgroups of $G$,
respectively. Then $P$, $Q$, $R$, $S$, $PQR$, $PQS$, $PRS$, $QRS$ are proper subgroups of $G$, and they permute with each other. So, they form $K_8$ as a
subgraph of $\Gamma(G)$, hence $\gamma(\Gamma(G))>1$, $\overline{\gamma}(\Gamma(G))>1$.
\end{proof}

\subsection{Non-solvable groups}\label{sec:5}

\begin{pro}\label{genus 1}(\cite[Theorem 2.1]{raj})
Let $G$ be a group and $N$ be a subgroup of $G$. If
$N$ is  normal in $G$, then $\Gamma (G/N)$ is isomorphic (as a graph) to  a subgraph of $\Gamma (G)$.
\end{pro}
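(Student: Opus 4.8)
The statement to prove is Proposition~\ref{genus 1}: if $N$ is a normal subgroup of $G$, then $\Gamma(G/N)$ is isomorphic to a subgraph of $\Gamma(G)$. The natural approach is to exhibit an explicit injective graph homomorphism from $\Gamma(G/N)$ into $\Gamma(G)$ using the correspondence theorem (lattice isomorphism theorem) for groups. First I would recall that the proper subgroups of $G/N$ are exactly the quotients $H/N$ where $H$ ranges over the subgroups of $G$ with $N\le H\subsetneq G$; this correspondence $H/N \leftrightarrow H$ is a bijection between the subgroup lattice of $G/N$ and the interval $[N,G]$ in the subgroup lattice of $G$, and it is order-preserving in both directions. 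So I would define the vertex map $\phi$ sending the vertex $H/N$ of $\Gamma(G/N)$ to the vertex $H$ of $\Gamma(G)$ (noting that $N\le H\subsetneq G$ forces $H$ to be a proper subgroup of $G$, hence a legitimate vertex). Injectivity of $\phi$ on vertices is immediate from the correspondence theorem.

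The key step is then to show $\phi$ preserves adjacency: if $H/N$ and $K/N$ permute in $G/N$, then $H$ and $K$ permute in $G$. Here I would use that permutability of subgroups is exactly the condition $HK=KH$, equivalently that $HK$ is a subgroup. Suppose $(H/N)(K/N)=(K/N)(H/N)$ in $G/N$. Since $N\le H$ and $N\le K$, we have $(H/N)(K/N) = HK/N$ as a subset of $G/N$ (because $HK$ is a union of cosets of $N$, as $N$ is normal and contained in $H$), and likewise $(K/N)(H/N)=KH/N$. Thus $HK/N = KH/N$ as subsets of $G/N$, and since the projection $G\to G/N$ has kernel $N\subseteq HK\cap KH$, taking preimages gives $HK=KH$. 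Hence $H$ and $K$ permute in $G$, so $\phi(H/N)\phi(K/N)$ is an edge of $\Gamma(G)$ whenever $(H/N)(K/N)$ is an edge of $\Gamma(G/N)$. This shows $\phi$ is a graph embedding, so $\Gamma(G/N)$ is isomorphic to the subgraph of $\Gamma(G)$ induced (or at least spanned) by the image of $\phi$.

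One small point to be careful about is the degenerate case where $G/N$ has no proper subgroups (i.e.\ $G/N$ is trivial or of prime order), so that $\Gamma(G/N)$ is not defined or is empty; in that case there is nothing to prove, or the empty graph is trivially a subgraph, so I would dispense with it in a sentence. I do not anticipate a real obstacle here: the entire content is the correspondence theorem plus the elementary observation that for $N\le H$ and $N\le K$ with $N\trianglelefteq G$ one has $(H/N)(K/N)=HK/N$ and this product-subset equality lifts faithfully back to $G$. The only thing requiring a line of care is verifying that $(H/N)(K/N)$ really equals the set $HK/N$ rather than merely being contained in it — but since every element of $HK$ is $hk$ with image $(hN)(kN)$, and every $hN\cdot kN$ comes from some $hk\in HK$, the two sets of cosets coincide. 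Once that identity is in hand, the adjacency-preservation argument is a one-line preimage computation, and the proposition follows.
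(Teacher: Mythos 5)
Your proof is correct: the paper itself states this proposition only by citation to \cite[Theorem~2.1]{raj} without reproducing an argument, and your correspondence-theorem proof (mapping $H/N\mapsto H$ and checking that $(H/N)(K/N)=HK/N$ so that permutability lifts) is exactly the standard argument behind that cited result. The only cosmetic point is that, under this paper's convention, ``proper subgroup'' excludes the trivial subgroup as well, so the vertices of $\Gamma(G/N)$ correspond to $N\lneq H\subsetneq G$ rather than $N\le H\subsetneq G$; this does not affect your argument.
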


\begin{cor}\label{c1}
If $\gamma(\Gamma(G/N))> 1$, $\overline{\gamma}(\Gamma(G/N))>1$, then $\gamma(\Gamma(G))> 1$, $\overline{\gamma}(\Gamma(G))>1$.
\end{cor}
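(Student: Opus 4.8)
The plan is to deduce this directly from Proposition~\ref{genus 1} together with the fact that both the orientable genus $\gamma(\cdot)$ and the nonorientable genus $\overline{\gamma}(\cdot)$ are monotone with respect to the subgraph relation. Concretely, with $N$ normal in $G$ (as in the hypothesis of Proposition~\ref{genus 1}), Proposition~\ref{genus 1} gives that $\Gamma(G/N)$ is isomorphic, as a graph, to a subgraph of $\Gamma(G)$. So it suffices to recall that if $H$ is a subgraph of a graph $K$, then $\gamma(H)\le \gamma(K)$ and $\overline{\gamma}(H)\le\overline{\gamma}(K)$: any embedding of $K$ on a surface (orientable or not) restricts to an embedding of $H$ on the same surface, so the minimum number of handles (respectively, crosscaps) needed for $H$ cannot exceed that needed for $K$.

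Putting these together, $\gamma(\Gamma(G))\ge\gamma(\Gamma(G/N))$ and $\overline{\gamma}(\Gamma(G))\ge\overline{\gamma}(\Gamma(G/N))$. Hence the assumptions $\gamma(\Gamma(G/N))>1$ and $\overline{\gamma}(\Gamma(G/N))>1$ immediately force $\gamma(\Gamma(G))>1$ and $\overline{\gamma}(\Gamma(G))>1$, which is the claim.

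There is essentially no hard step here: the only point worth making explicit is that the statement implicitly assumes $N$ is a normal subgroup of $G$ (so that Proposition~\ref{genus 1} applies) and that $\Gamma(G/N)$ is defined, i.e., $G/N$ is neither trivial nor of prime order — but this is automatic, since a graph of genus (or nonorientable genus) greater than one is in particular nonempty. The corollary will be invoked repeatedly in the sequel to push non-embeddability from a quotient back up to the whole group, exactly as it is used for subgroups elsewhere in the paper.
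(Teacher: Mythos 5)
Your proposal is correct and matches the paper's intent exactly: the corollary is stated without proof precisely because it follows immediately from Proposition~\ref{genus 1} together with the monotonicity of $\gamma$ and $\overline{\gamma}$ under taking subgraphs, which is the argument you give. Your added remark about the implicit normality hypothesis on $N$ is a reasonable clarification but not a substantive difference.
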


A \textit{minimal simple group} is a non-abelian group in which all of its proper subgroups are solvable.
It is well known that any non-solvable group has a simple group as a sub-quotient, and every simple group has a minimal simple group as a sub-quotient.
Therefore, if we can show that the minimal simple groups have non-toroidal (non-projective-planar) permutability graphs,
then, by Corollary~\ref{c1}, the permutability graph of a non-solvable group is non-toroidal (resp., non-projective-planar).

The classification of minimal simple groups is given in the following result.

\begin{thm}\label{genus 13}(\cite[Corollary 1]{thomp})
A finite group is a minimal simple group if and only if it is isomorphic to one of the following:
\begin{enumerate}[{\normalfont (i)}]
\item $L_2(2^{p})$, where $p$ is any prime;
\item $L_2(3^{p})$, where  $p$ is an odd prime;
\item $ L_3(3)$;
\item $L_2(p)$, where $p$ is any prime exceeding $3$ such that $p^2  +1 \equiv 0~(\text{mod}~ 5)$;
\item $Sz(2^q)$, where $q$ is any odd prime.
\end{enumerate}
\end{thm}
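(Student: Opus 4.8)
My plan for Theorem~\ref{genus 13} is short and, frankly, forced: this is a celebrated theorem of Thompson, quoted here verbatim from \cite[Corollary~1]{thomp}, so I would simply invoke it rather than attempt a proof. The only thing resembling a ``proof'' that belongs in a paper on permutability graphs is the citation itself. Still, let me indicate what the statement rests on and why reproducing it is not an option.

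A minimal simple group $G$ --- a non-abelian simple group all of whose proper subgroups are solvable --- is in particular an $N$-group: the normalizer of every nontrivial $p$-subgroup is a proper subgroup of $G$, hence solvable. Thompson's determination of the $N$-groups, a long sequence of papers that grew out of the Feit--Thompson odd order theorem, proceeds by an exhaustive $p$-local analysis --- controlling the structure of maximal $p$-local subgroups for each prime $p$ dividing $|G|$, analysing how the primes in $\pi(G)$ interact through these subgroups, applying transfer and fusion arguments to rule out configurations, and finally identifying $G$ with a group on a short explicit list. Restricting to minimal simple groups (the case relevant here) yields exactly the five families (i)--(v).

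So the ``main obstacle'' is simply that this is one of the deepest results in finite group theory; there is no short route, and a self-contained proof would be wildly out of proportion to the present paper. What I actually need from it downstream is only that the list (i)--(v) is \emph{complete}. Combined with Proposition~\ref{genus 1} and Corollary~\ref{c1} --- every non-solvable finite group has a minimal simple sub-quotient, and $\Gamma$ of a quotient embeds as a subgraph in $\Gamma$ of the ambient group --- this reduces the entire non-solvable case of the main classification to checking that $\gamma(\Gamma(G))>1$ and $\overline{\gamma}(\Gamma(G))>1$ for $G$ ranging over $L_2(2^p)$, $L_2(3^p)$, $L_3(3)$, $L_2(p)$ with $5\mid p^2+1$, and $Sz(2^q)$. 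Carrying out those finitely many verifications --- typically by exhibiting a forbidden configuration such as $K_8$, $K_{3,7}$, $K_{4,5}$, or one of the torus obstructions $\mathcal A_1$, $\mathcal B_4$ inside $\Gamma(G)$ --- is where the genuine work of this section will lie, not in Theorem~\ref{genus 13} itself.
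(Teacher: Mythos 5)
Your proposal matches the paper exactly: Theorem~\ref{genus 13} is quoted from Thompson's classification of minimal simple groups (\cite[Corollary~1]{thomp}) and the paper offers no proof, only the citation. Your account of how the result is then used --- via Proposition~\ref{genus 1} and Corollary~\ref{c1} to reduce the non-solvable case to the five listed families --- is also precisely how Section~\ref{sec:5} proceeds.
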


\begin{lemma}\label{l2}
If $n > 2$, then $\gamma(\Gamma(D_{4n}))>1$, $\overline{\gamma}(\Gamma(D_{4n}))>1$.
\end{lemma}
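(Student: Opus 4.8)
The plan is to show that $\Gamma(D_{4n})$ contains $K_{3,7}$ as a subgraph and then invoke the monotonicity of the genus and the nonorientable genus under taking subgraphs, together with Theorem~\ref{genus 100}, which gives $\gamma(K_{3,7})>1$ and $\overline{\gamma}(K_{3,7})>1$ (here $m=3$, $n=7$ satisfy the inequalities recorded after that theorem). Writing $D_{4n}=\langle a,b\mid a^{2n}=b^2=1,\ ba=a^{-1}b\rangle$, the hypothesis $n>2$ gives $2n\ge 6$, which is precisely the room needed to produce the ten distinct proper subgroups required.

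For the ``normal side'' I would take $X=\{\langle a\rangle,\ \langle a^2,b\rangle,\ \langle a^2,ab\rangle\}$. Each of these has order $2n$ and hence index two in $D_{4n}$, so each is normal in $D_{4n}$; a short computation, using that $a^k=1$ only when $2n\mid k$, shows they are pairwise distinct (for instance $b$ lies in $\langle a^2,b\rangle$ but not in $\langle a^2,ab\rangle$ nor in $\langle a\rangle$). For the other side I would take $Y$ consisting of the trivial subgroup together with the six reflection subgroups $\langle a^ib\rangle$ for $0\le i\le 5$; these six have order two and are pairwise distinct precisely because $2n\ge 6$, none of them is contained in $\langle a\rangle$, and since every member of $X$ has order $2n\ge 6$, the sets $X$ and $Y$ are disjoint and together list ten distinct proper subgroups of $D_{4n}$. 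Because each subgroup in $X$ is normal, it permutes with every subgroup of $D_{4n}$, in particular with every member of $Y$; therefore the bipartite graph with parts $X$ and $Y$ is complete in $\Gamma(D_{4n})$, i.e.\ $K_{3,7}\subseteq\Gamma(D_{4n})$, and the argument is finished by the first paragraph.

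I do not expect a genuine obstacle: the whole content is the elementary bookkeeping that, for every $n>2$, the ten listed subgroups are proper and pairwise distinct, and this is exactly where the hypothesis enters ($2n\ge 6$ is needed to have six distinct reflections $\langle a^ib\rangle$, and $n>2$ keeps $\langle a^2,b\rangle$ and $\langle a^2,ab\rangle$ both proper and different from $\langle a\rangle$). If one prefers to work entirely inside the cyclic part, an equally valid choice is $X=\{\langle a\rangle,\langle a^2\rangle,\langle a^n\rangle\}$: every subgroup of $\langle a\rangle$ is normal in $D_{4n}$ since conjugation by $b$ inverts $a$ and hence preserves each such subgroup, and these three have the distinct orders $2n$, $n$, $2$ when $n>2$.
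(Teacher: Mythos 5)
Your overall strategy --- exhibit a complete bipartite subgraph with normal subgroups on one side and reflections on the other, then invoke Theorem~\ref{genus 100} --- is exactly the paper's strategy; the paper builds a $K_{4,5}$ from $\{\langle a\rangle,\langle a^2\rangle,\langle a^2,b\rangle,\langle a^2,ba\rangle\}$ and five reflections, while you aim for a $K_{3,7}$. There is, however, a genuine defect in your choice of $Y$: you include the trivial subgroup as one of its seven vertices, and under this paper's conventions the trivial subgroup is \emph{not} a vertex of $\Gamma(G)$. (The paper asserts that groups of prime order have no proper subgroups, and the vertex count used with Lemma~\ref{l1} for a cyclic group is $(\alpha_1+1)\cdots(\alpha_k+1)-2$, i.e.\ both the whole group and the trivial subgroup are excluded.) Deleting that vertex leaves you with only $K_{3,6}$, and $\gamma(K_{3,6})=\lceil (3-2)(6-2)/4\rceil=1$, so the orientable half of the lemma does not follow from your argument as written; only the nonorientable half survives, since $\overline{\gamma}(K_{3,6})=\lceil (3-2)(6-2)/2\rceil=2>1$.

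The repair is immediate: replace the trivial subgroup in $Y$ by $\langle a^2\rangle$, which has order $n\ge 3$ and is therefore a proper nontrivial subgroup distinct from the three index-two members of $X$ (order $2n$) and from the six reflection subgroups (order $2$); each member of $X$ is normal and hence permutes with it, so you genuinely obtain $K_{3,7}$ and the rest of your first paragraph finishes the proof. (If instead you use your alternative $X=\{\langle a\rangle,\langle a^2\rangle,\langle a^n\rangle\}$, the seventh member of $Y$ must be chosen differently, e.g.\ $\langle a^2,b\rangle$.) With that one substitution your argument is correct and is essentially a $K_{3,7}$ variant of the paper's $K_{4,5}$ argument, with no practical advantage to either choice.
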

\begin{proof}
 Here $H_1:=\langle a\rangle$, $H_2:=\langle a^2\rangle$, $H_3:=\langle a^2,b\rangle$, $H_4:=\langle a^2,ba\rangle$, $H_5:=\langle b\rangle$,
 $H_6:=\langle ba\rangle$, $H_7:=\langle ba^2\rangle$, $H_8:=\langle ba^3\rangle$, $H_9:=\langle ba^4\rangle$ are proper
subgroups of $D_{4n}$. Since $H_1$, $H_2$, $H_3$, $H_4$ are normal in $D_{4n}$, it follows
that $K_{4,5}$ is a subgraph of $\Gamma(G)$. Therefore, $\gamma(\Gamma(D_{4n}))>1$, $\overline{\gamma}(\Gamma(D_{4n}))>1$.
\end{proof}

\begin{pro}\label{genus 14}
 If $G$ is a non-solvable group, then $\gamma(\Gamma(G))>1$, $\overline{\gamma}(\Gamma(G))>1$.
\end{pro}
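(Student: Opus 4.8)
The plan is to use the reduction already prepared in the paragraphs preceding Theorem~\ref{genus 13}. Since the permutability of two subgroups is an intrinsic condition, $\Gamma(H)$ is (isomorphic to) a subgraph of $\Gamma(G)$ whenever $H\le G$ and $H\neq G$; combined with Proposition~\ref{genus 1} and Corollary~\ref{c1}, this shows that if $M$ is any section (sub-quotient) of $G$ then $\gamma(\Gamma(M))>1$ forces $\gamma(\Gamma(G))>1$, and likewise for $\overline{\gamma}$. As every non-solvable group has a minimal simple group as a section, the whole statement reduces to proving $\gamma(\Gamma(S))>1$ and $\overline{\gamma}(\Gamma(S))>1$ for every minimal simple group $S$ in the list (i)--(v) of Theorem~\ref{genus 13}. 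For each such $S$ I would locate inside $S$ a subgroup whose permutability graph has already been shown to be neither toroidal nor projective-planar: a non-abelian $p$-group of order $p^{\alpha}$ with $\alpha\ge 3$ (Proposition~\ref{genus 5}), an elementary abelian group $\mathbb Z_p\times\mathbb Z_p\times\mathbb Z_p$ (Proposition~\ref{genus 2}), or a dihedral group $D_{4n}$ with $n\ge 3$ (Lemma~\ref{l2}).

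Carrying this out: if $S=L_2(2^{p})$ with $p\ge 3$, a Sylow $2$-subgroup is elementary abelian of order $2^{p}$ and so contains $\mathbb Z_2\times\mathbb Z_2\times\mathbb Z_2$; if $S=L_2(3^{p})$ (here $p$ is an odd prime, hence $p\ge 3$), a Sylow $3$-subgroup is elementary abelian of order $3^{p}$ and contains $\mathbb Z_3\times\mathbb Z_3\times\mathbb Z_3$; if $S=L_3(3)$, a Sylow $3$-subgroup is the non-abelian group of order $3^{3}$ of upper unitriangular matrices, so Proposition~\ref{genus 5} applies; if $S=Sz(2^{q})$ (here $q$ is an odd prime, so $q\ge 3$), a Sylow $2$-subgroup is a non-abelian $2$-group of order $2^{2q}\ge 2^{6}$, and again Proposition~\ref{genus 5} applies. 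In the remaining case $S=L_2(p)$ with $p\ge 7$, it is classical (Dickson's description of the subgroups of $L_2(p)$) that $S$ contains dihedral subgroups of orders $p-1$ and $p+1$; exactly one of $p-1,p+1$ is divisible by $4$, yielding $D_{4m}\le S$ with $m\ge 2$. If $m\ge 3$, Lemma~\ref{l2} finishes it; the only case with $m=2$ is $p=7$, where $D_8\le L_2(7)$ is a non-abelian group of order $2^{3}$ and Proposition~\ref{genus 5} applies.

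This scheme covers every minimal simple group except $L_2(4)\cong A_5$, whose maximal subgroups ($A_4$, $S_3\cong D_6$, $D_{10}$) all have planar permutability graphs, so $A_5$ must be treated directly. Here $A_5$ has six Sylow $5$-subgroups $P_1,\dots,P_6$ and five subgroups $T_1,\dots,T_5$ isomorphic to $A_4$ (the point stabilizers), and $P_i\cap T_j=1$ for all $i,j$, whence $P_iT_j=T_jP_i=A_5$ and so $P_i$ is adjacent to $T_j$ in $\Gamma(A_5)$. Taking any four of the $P_i$ together with all five $T_j$ exhibits $K_{4,5}$ as a subgraph of $\Gamma(A_5)$; by Theorem~\ref{genus 100} (or the consequence noted after it) $\gamma(K_{4,5})>1$ and $\overline{\gamma}(K_{4,5})>1$, hence $\gamma(\Gamma(A_5))>1$ and $\overline{\gamma}(\Gamma(A_5))>1$. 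Combining this with the previous paragraph completes the proof.

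The reduction to minimal simple groups and the appeals to the subgroup structure of $L_2$, $L_3$ and $Sz$ are routine; the main obstacle is the single group $A_5=L_2(4)=L_2(5)$, which is too small to contain any proper subgroup with a non-toroidal permutability graph and therefore requires the explicit forbidden bipartite subgraph inside $\Gamma(A_5)$ itself. One must also check carefully that $A_5$ is genuinely the only exceptional minimal simple group, i.e.\ that in all the infinite families the relevant $\mathbb Z_p^{3}$, non-abelian $p$-group, or $D_{4n}$ ($n\ge 3$) is really present.
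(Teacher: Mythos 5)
Your proof is correct and follows essentially the same route as the paper: reduce to minimal simple groups via Theorem~\ref{genus 13}, locate in each family a subgroup already known to have non-toroidal, non-projective-planar permutability graph (elementary abelian $\mathbb Z_p^3$, a non-abelian $p$-group, or $D_{4n}$), and handle $A_5$ directly by exhibiting $K_{4,5}$ spanned by the $A_4$'s and the Sylow $5$-subgroups. The only (immaterial) deviations are that for $Sz(2^q)$ the paper uses $(\mathbb Z_2)^q$ with Proposition~\ref{genus 2} rather than the non-abelian Sylow $2$-subgroup, and for $L_2(7)$ it passes through $S_4$ rather than citing $D_8$ directly.
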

\begin{proof}
As mentioned above, it is enough to investigate the toroidality and projective-planarity of permutability graphs of subgroups of minimal simple groups.

\noindent \textbf{Case 1:}  $G \cong L_2(q^p)$.
If $p=2$, then the only non-solvable group is $ L_2(4)$. Also, $ L_2(4) \cong A_5$~\cite{atlas}.
Note that $ A_5$ contains four copies of $A_4$, say $H_i$, $i=1,2,3,4$, and five copies of $\mathbb Z_5$, say $H_j$, $j=5,6,7,8,9$, as its subgroups.
Here for each $i=1,2,3,4$ and $j=5,6,7,8,9$, $H_iH_j=A_5$. It follows that $K_{4,5}$ is a subgraph of
$\Gamma(G)$ with bipartition $X:=\{H_1$, $H_2$, $H_3$, $H_4\}$ and
$Y:=\{H_5$, $H_6$, $H_7$, $H_8$, $H_9\}$, and so $\gamma(\Gamma(G))>1$, $\overline{\gamma}(\Gamma(G))>1$.
% But for each $A_4$, by~(\ref{e4}), $A_4$ together with its four
% proper subgroups form $K_{5}$ as a subgraph of $\Gamma (A_5)$. So we have at least two $K_5$ as a subgraph of $\Gamma(G)$.
% Therefore, by \cite{and}[P. 3628], $\gamma(\Gamma (A_5))>1$.
If $p> 2$, then $ L_2(q^p)$ contains a subgroup isomorphic to $(\mathbb Z_q)^p$, namely the subgroup of
matrices of the form $\overline{
\left( \begin{smallmatrix}
1 & a\\
0 & 1
\end{smallmatrix}\right)
}$ with $a \in \mathbb F_{q^p}$. By Proposition~\ref{genus 2}, $\gamma(\Gamma((\mathbb Z_q)^p))>1$, $\overline{\gamma}(\Gamma((\mathbb Z_q)^p))>1$.
 Therefore, $\gamma(\Gamma(G))>1$, $\overline{\gamma}(\Gamma(G))>1$.

\noindent \textbf{Case 2:} $G \cong  L_3(3)$. In $SL_3(3)$, the only matrix in the subgroup $H$ is the identity matrix, so $L_3(3) \cong SL_3(3)$.
Let us consider the subgroup consisting of matrices of the form
$\left(\begin{smallmatrix}
1 & a & b\\
0 & 1 & c\\
0 & 0 & 1
\end{smallmatrix}\right)$
with $a$, $b$, $c \in \mathbb F_3$. This subgroup is isomorphic to the group
$(\mathbb Z_p \times \mathbb Z_p) \rtimes \mathbb Z_p$ with $p=3$. By Proposition~\ref{genus 5},
$\gamma(\Gamma ((\mathbb Z_p \times \mathbb Z_p) \rtimes \mathbb Z_p))>1$, $\overline{\gamma}(\Gamma ((\mathbb Z_p \times \mathbb Z_p) \rtimes \mathbb Z_p))>1$, and so $\gamma(\Gamma (G))>1$, $\overline{\gamma}(\Gamma(G))>1$.

\noindent \textbf{Case 3:} $G \cong L_2(p)$. We have to consider two subcases:

\noindent \textbf{Subcase 3a:} $p \equiv 1  ~(\text{mod}~ 4)$. Then  $L_2(p)$
has a  subgroup isomorphic to $D_{p-1}$ \cite[p.~222]{boh-reid}.
So, by Lemma~\ref{l2}, $\gamma(\Gamma (D_{p-1}))>1$ when $p>5$.
If $p=5$, then $L_2(5) \cong A_5 \cong L_2(4)$ \cite{atlas}. By Case 1, $\gamma(\Gamma (A_5))>1$, $\overline{\gamma}(\Gamma(A_5))>1$, so $\gamma(\Gamma (G))>1$, $\overline{\gamma}(\Gamma(G))>1$.

\noindent \textbf{Subcase 3b:} $p \equiv 3 ~(\text{mod}~ 4)$.  $L_2(p)$
has  a subgroup isomorphic to $D_{p+1}$ \cite[p.~222]{boh-reid}.  By Lemma~\ref{l2}, $\gamma(\Gamma(D_{p+1}))>1$, $\overline{\gamma}(\Gamma(D_{p+1}))>1$ when $p > 7$.
If $p=7$, then $S_4$ is a maximal subgroup of $L_2(7)$~\cite{atlas}. By~\eqref{e5}, $\gamma(\Gamma(S_4))>1$, $\overline{\gamma}(\Gamma(S_4))>1$,
and so $\gamma(\Gamma(G))>1$, $\overline{\gamma}(\Gamma(G))>1$.

\noindent \textbf{Case 4:}  $G \cong Sz(2^q)$. Then  $Sz(2^q)$ has a subgroup isomorphic to
$(\mathbb Z_2)^q$, $q \geq 3$ \cite[p.~466]{goren}. By Proposition~\ref{genus 2}, $\gamma(\Gamma((\mathbb Z_2)^q))>1$, $\overline{\gamma}(\Gamma((\mathbb Z_2)^q))>1$ for $q \geq 3$.
Therefore, $\gamma(\Gamma(G))>1$, $\overline{\gamma}(\Gamma(G))>1$.
\end{proof}

%%%%%%%%%%%%%%%%%

\section{Main results}\label{sec:6}

By combining all the results obtained in Sections~\ref{sec:3} and \ref{sec:4} above, we have the following general main result,
which classifies the finite groups whose permutability graphs of subgroups are toroidal or projective-planar.

\begin{thm}\label{t16}
 Let $G$ be a finite group. Then
 \begin{enumerate}[\normalfont (1)]
 \item $\Gamma(G)$ is toroidal if and only if $G$ is isomorphic to one of the following groups
(where $p,q$ and $r$ are distinct primes):
 \begin{enumerate}[{\normalfont (a)}]

 \item $\mathbb Z_{p^{\alpha}}(\alpha=6,7,8)$, $~\mathbb Z_{p^3q}$, $\mathbb Z_{p^2 q^2}$, $\mathbb Z_{pqr}$;

 \item $\mathbb Z_4\times \mathbb Z_2$, $\mathbb Z_5 \times \mathbb Z_5$;

\item  $\mathbb Z_3 \rtimes \mathbb Z_4$, $\mathbb Z_5\rtimes \mathbb Z_4$;

\item $\langle a, b, c~|~ a^p=b^p=c^q=1, ab=ba, cac^{-1}=b^{-1},
cbc^{-1}= ab^{l} \rangle$, where $\bigl(\begin{smallmatrix}
  0 & -1\\ 1 & l
\end{smallmatrix} \bigr)$ has order $q$ in $GL_2(p)$, $p=3$, 5;

 \item $(\mathbb Z_3\times \mathbb Z_3)\rtimes \mathbb Z_4=\langle a,b,c~|~a^3=b^3=c^4=1, ab=ba, cac^{-1}=b^{-1}$,
$cbc^{-1}=ab^l\rangle$, where $\bigl(\begin{smallmatrix}
  0 & -1\\ 1 & l
\end{smallmatrix} \bigr)$ has order dividing $4$ in $GL_2(3)$.
\end{enumerate}
\item $\Gamma(G)$ is projective-planar if and only if $G$ is isomorphic to one of the following groups
(where $p,q$ and $r$ are distinct primes):
 \begin{enumerate}[{\normalfont (a)}]

 \item $\mathbb Z_{p^{\alpha}}(\alpha=6,7)$, $~\mathbb Z_{p^3q}$, $\mathbb Z_{pqr}$;

 \item $\mathbb Z_4\times \mathbb Z_2$, $\mathbb Z_5 \times \mathbb Z_5$;

\item  $\mathbb Z_3 \rtimes \mathbb Z_4$;

\item $\langle a, b, c~|~ a^3=b^3=c^q=1, ab=ba, cac^{-1}=b^{-1},
cbc^{-1}= ab^{l} \rangle$, where $\bigl(\begin{smallmatrix}
  0 & -1\\ 1 & l
\end{smallmatrix} \bigr)$ has order $q$ in $GL_2(3)$;

 \item $(\mathbb Z_3\times \mathbb Z_3)\rtimes \mathbb Z_4=\langle a,b,c~|~a^3=b^3=c^4=1, ab=ba, cac^{-1}=b^{-1}$,
$cbc^{-1}=ab^l\rangle$, where $\bigl(\begin{smallmatrix}
  0 & -1\\ 1 & l
\end{smallmatrix} \bigr)$ has order dividing $4$ in $GL_2(3)$.
\end{enumerate}
\end{enumerate}
\end{thm}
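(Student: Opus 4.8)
The plan is to obtain Theorem~\ref{t16} by assembling the case analyses already carried out in Sections~\ref{sec:3} and~\ref{sec:4}; no new computation is needed. Recall first that $\Gamma(G)$ is defined only when $G$ is neither trivial nor of prime order, so throughout we may assume that $|G|$ admits a proper nontrivial divisor. The argument is then a complete partition of the finite groups according to whether $G$ is abelian, non-abelian solvable, or non-solvable, and, within the solvable cases, according to the number $k$ of distinct primes dividing $|G|$ together with the exponents in its prime factorisation. In each cell of this partition the relevant proposition already states whether $\gamma(\Gamma(G))\le 1$ and $\overline{\gamma}(\Gamma(G))\le 1$, and, when it does, for exactly which groups equality with $1$ holds.

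Concretely, I would first dispose of the non-solvable case: by Proposition~\ref{genus 14}, if $G$ is non-solvable then $\gamma(\Gamma(G))>1$ and $\overline{\gamma}(\Gamma(G))>1$, so no such $G$ contributes. For $G$ abelian, Proposition~\ref{genus 2} supplies the toroidal and projective-planar members directly. It then remains to treat $G$ non-abelian solvable, organised by $k$: if $k\ge 4$, Proposition~\ref{genus 12} excludes $G$; if $k=3$, Proposition~\ref{genus 11} excludes $G$; if $k=2$, write $|G|=p^{\alpha}q^{\beta}$ and split by $(\alpha,\beta)$ up to relabelling the primes, using the observation that $\Gamma(G)\cong K_{1,q}$ is planar by~\eqref{222} when $|G|=pq$ (hence neither toroidal nor projective-planar), Proposition~\ref{genus 7} when $|G|=p^{2}q$, Proposition~\ref{genus 8} when $|G|=p^{\alpha}q$ with $\alpha\ge 3$, Proposition~\ref{genus 9} when $|G|=p^{2}q^{2}$, and Proposition~\ref{genus 10} when $\alpha,\beta\ge 2$ and $\alpha+\beta\ge 5$; and if $k=1$, then $G$ is a non-abelian $p$-group, so $|G|=p^{\alpha}$ with $\alpha\ge 3$ and Proposition~\ref{genus 5} excludes $G$. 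Since every finite group falls into exactly one of these cells, all possibilities are exhausted.

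Finally I would collect the positive findings. The only cells contributing toroidal permutability graphs are the abelian case (Proposition~\ref{genus 2}(1)), the order $p^{2}q$ case (Proposition~\ref{genus 7}(1)), and the order $p^{2}q^{2}$ case (Proposition~\ref{genus 9}); taking the union of the groups listed there yields precisely the list in part~(1). Likewise, the projective-planar graphs arise only from Proposition~\ref{genus 2}(2), Proposition~\ref{genus 7}(2), and Proposition~\ref{genus 9}, whose union is the list in part~(2). A direct comparison of the two lists then shows that part~(2) is contained in part~(1), which is exactly the claim that every projective-planar permutability graph of subgroups of a finite group is toroidal.

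The main point requiring care is not any single hard argument — those are confined to the propositions — but the bookkeeping: checking that the partition into cases is genuinely exhaustive and non-overlapping (in particular, that the boundary orders $pq$, $p^{2}q$, $p^{3}q$, $p^{2}q^{2}$ are routed to the correct proposition), and keeping straight that ``toroidal'' and ``projective-planar'' mean genus and nonorientable genus \emph{equal to one}. The latter forces us to exclude explicitly the planar groups of Theorem~\ref{genus 19}, and to place a group such as $\mathbb{Z}_5\rtimes\mathbb{Z}_4$ — whose permutability graph is toroidal but not projective-planar — in part~(1) only. This is the one realistic source of error, and it is resolved simply by matching the case list against the list of all finite-group orders.
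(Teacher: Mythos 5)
Your proposal is correct and follows essentially the same route as the paper, which derives Theorem~\ref{t16} simply ``by combining all the results obtained in Sections~\ref{sec:3} and \ref{sec:4}''; your explicit partition by solvability, number of prime divisors, and exponent pattern is exactly the implicit case structure of those sections, and you route each order class to the correct proposition. The only difference is that you spell out the bookkeeping (including the $\mathbb{Z}_5\rtimes\mathbb{Z}_4$ asymmetry between the two lists) that the paper leaves tacit.
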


The following result is a main application of the general results for group theory.

\begin{cor}\label{genus 20}
Let $G$ be a finite group, and $p,q$ are distinct primes. Then we have:
\begin{enumerate}[{\normalfont (1)}]
\item $\Gamma(G)$ is $K_{1,5}$-free if and only if $G$ is isomorphic to one of $\mathbb Z_{p^\alpha}(\alpha=2$, 3, 4, 5$)$,
$\mathbb Z_{p^\alpha q}(\alpha=1$, $2)$, $\mathbb Z_p\times \mathbb Z_p(p=2$, $3)$, $Q_8$, or $S_3$;
%%%%%%%%%%
\item $\Gamma(G)$ is $P_5$-free if and only if
 $G$ is isomorphic to one of $\mathbb Z_{p^\alpha}(\alpha=2$, $3$, $4$, $5$, $6)$,
$\mathbb Z_{p^\alpha q}(\alpha=1$, $2)$, $\mathbb Z_p\times \mathbb Z_p(p=2$, $3)$, $Q_8$, $\mathbb Z_q\rtimes \mathbb Z_p$, or $A_4$;
%%%%%%%%%%%
\item $\Gamma(G)$ is $P_6$-free if and only if $G$ is isomorphic to one of
$\mathbb Z_{p^\alpha}(\alpha=2$, 3, 4, 5, 6, $7)$, $\mathbb Z_{p^\alpha q}(\alpha=1,2,3)$,
$\mathbb Z_{pqr}$, $\mathbb Z_p \times \mathbb Z_p(p=2$, 3, $5)$,
$\mathbb Z_4\times \mathbb Z_2$, $Q_8$, $\mathbb Z_q\rtimes \mathbb Z_p$, $A_4$, or $\langle a, b, c~|~ a^3=b^3=c^2=1, ab=ba, cac^{-1}=b^{-1},
cbc^{-1}= ab^{l} \rangle$, where $\bigl(\begin{smallmatrix}
  0 & -1\\ 1 & l
\end{smallmatrix} \bigr)$ has order $2$ in $GL_2(3)$;
%%%%%%%%%%%%%%%
\item $\Gamma(G)$ is $C_6$-free  if and only if
 $G$ is isomorphic to one of $\mathbb Z_{p^\alpha}(\alpha=2$, $3$, $4$, $5$, $6)$,
$\mathbb Z_{p^\alpha q}(\alpha=1$, $2)$, $\mathbb Z_p\times \mathbb Z_p(p=2$, $3)$, $Q_8$, $\mathbb Z_q\rtimes \mathbb Z_p$, $A_4$,
or $\langle a, b, c~|~ a^3=b^3=c^2=1, ab=ba, cac^{-1}=b^{-1},
cbc^{-1}= ab^{l} \rangle$, where $\bigl(\begin{smallmatrix}
  0 & -1\\ 1 & l
\end{smallmatrix} \bigr)$ has order $2$ in $GL_2(3)$;
%%%%%%%%%%%%%%
\item $\Gamma(G)$ is $K_{3,3}$-free if and only if $G$ is isomorphic to one of $\mathbb Z_{p^\alpha}(\alpha=2$, $3$, $4$, $5$, $6)$,
$\mathbb Z_{p^\alpha q}(\alpha=1$, $2)$, $\mathbb Z_p\times \mathbb Z_p(p=2$, $3)$, $Q_8$, $\mathbb Z_q\rtimes \mathbb Z_p$,
$\mathbb Z_q\rtimes_2\mathbb Z_{p^2}$, $A_4$, or $\langle a, b, c~|~ a^3=b^3=c^2=1, ab=ba, cac^{-1}=b^{-1},
cbc^{-1}= ab^{l} \rangle$, where $\bigl(\begin{smallmatrix}
  0 & -1\\ 1 & l
\end{smallmatrix} \bigr)$ has order $2$ in $GL_2(3)$.

%  \item $\Gamma(G)$ is $K_{4,3}$-free if and only if $G$ is isomorphic to one of $\mathbb Z_{p^\alpha}(\alpha=6,7)$, $\mathbb Z_{p^3q}$,
%  $\mathbb Z_{p^2 q^2}$, $\mathbb Z_{pqr}$, $\mathbb Z_5 \times \mathbb Z_5$, $\mathbb Z_4\times \mathbb Z_2$, $\mathbb Z_3\rtimes \mathbb Z_4$.
%  \item $\Gamma(G)$ is $K_{5,3}$-free if and only if $G$ is isomorphic to one of $\mathbb Z_{p^\alpha}(\alpha=6,7)$, $\mathbb Z_{p^3q}$,
%  $\mathbb Z_{p^2 q^2}$, $\mathbb Z_{pqr}$, $\mathbb Z_5 \times \mathbb Z_5$, $\mathbb Z_4\times \mathbb Z_2$, $\mathbb Z_3\rtimes \mathbb Z_4$.
 \end{enumerate}
 \end{cor}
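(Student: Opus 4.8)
The plan is to prove all five equivalences by a common two‑part argument, combining the explicit descriptions of $\Gamma(G)$ accumulated in Sections~\ref{sec:3}--\ref{sec:5} (and in \cite{raj,raj1}) with a monotonicity principle. The principle is that being $X$‑free passes to subgroups and quotients: if $H\le G$ then $\Gamma(H)$ is (isomorphic to) an induced subgraph of $\Gamma(G)$ — permutability of a pair of subgroups is intrinsic to the pair, and a nontrivial proper subgroup of a proper $H$ is a nontrivial proper subgroup of $G$ — and by Proposition~\ref{genus 1} the same holds for each quotient $G/N$. Hence if $\Gamma(G)$ is $X$‑free, then so is $\Gamma(H)$ for every subgroup $H\le G$ and every quotient of $G$.

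For the ``listed $\Rightarrow$ $X$‑free'' direction, the point is that each group $G$ in the relevant list has $\Gamma(G)$ already known explicitly — it is one of $K_n$ for small $n$, a star $K_{1,q}$, $K_3+\overline{K}_q$, $K_2+qK_2$, $K_1+(K_3\cup\overline{K}_4)$, $K_1+(K_4\cup\overline{K}_9)$, $K_2+(K_4\cup 9K_2)$, or a comparably small graph from Propositions~\ref{genus 2},~\ref{genus 5},~\ref{genus 7},~\ref{genus 9} and \cite{raj}. One then rules out $X$ by inspection, using only elementary recognition criteria: $K_n$ contains $K_{1,5}$, $P_5$, $C_6$ or $K_{3,3}$ precisely when $n\ge 6$, and contains $P_6$ precisely when $n\ge 7$; a star $K_{1,m}$ contains $K_{1,5}$ iff $m\ge 5$ and contains none of $P_5,P_6,C_6,K_{3,3}$; and in the ``apex'' graphs such as $K_1+(K_4\cup\overline{K}_9)$ a vertex of degree one lies on no cycle, so the longest path and the longest cycle are governed by the bounded core. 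Each (group, $X$) pair is then a one‑line check.

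For the converse I would argue by contraposition, reducing to a bounded case analysis. One first verifies, once and for all five $X$ at once, the numerical fact that each of $K_8$, $K_{4,5}$, $K_{3,7}$, $K_{3,5}$, and $K_n$ with $n\ge 7$ contains every one of $K_{1,5},P_5,P_6,C_6,K_{3,3}$ as a subgraph (immediate for the complete graphs; a short alternating‑path and $6$‑cycle argument for the complete bipartite ones). Scanning the proofs of Propositions~\ref{genus 2}--\ref{genus 14}, every finite group $G$ lying outside both the planar list of Theorem~\ref{genus 19} and the toroidal/projective‑planar list of Theorem~\ref{t16} — as well as many groups inside those lists — has $\Gamma(G)$ containing one of these graphs (wherever the obstruction $\mathcal A_1$ is exhibited there, it appears together with six mutually adjacent subgroups, so that configuration likewise contains all five forbidden subgraphs); thus $\Gamma(G)$ is $X$‑free for none of the five $X$. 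There remain only the finitely many group families in Theorems~\ref{genus 19} and~\ref{t16}, for each of which $\Gamma(G)$ is known explicitly; applying the same recognition criteria graph by graph and $X$ by $X$, one keeps a group in the $X$‑free list exactly when $X\not\subseteq\Gamma(G)$, yielding the five lists in the statement. The main difficulty — where care is needed and an error is easiest to make — is precisely this last bookkeeping: one must confirm that in every branch of Sections~\ref{sec:3}--\ref{sec:5} the exhibited non‑embeddable subgraph is one of those above, and one must correctly decide, for each borderline graph such as $K_5\cong\Gamma(\mathbb Z_{p^6})$, $K_6\cong\Gamma(\mathbb Z_{p^7})$, $K_3+\overline{K}_3\cong\Gamma(\mathbb Z_3\rtimes\mathbb Z_4)$, $K_2+qK_2\cong\Gamma(\mathbb Z_q\rtimes_2\mathbb Z_{p^2})$, $\Gamma(A_4)$, $\Gamma((\mathbb Z_3\times\mathbb Z_3)\rtimes\mathbb Z_4)$ and $\Gamma$ of the order‑$18$ group $\langle a,b,c\mid a^3=b^3=c^2=1,\dots\rangle$, to which of the five lists it belongs.
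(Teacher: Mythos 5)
Your strategy coincides with the paper's: both arguments reduce the converse direction to the groups of Theorems~\ref{genus 19} and \ref{t16}(1) by exhibiting, for every other group, a subgraph of $\Gamma(G)$ drawn from a short fixed list and checking that each graph on that list contains all five of $K_{1,5}$, $P_5$, $P_6$, $C_6$, $K_{3,3}$; the forward direction is then the same explicit-graph inspection in both. The paper's fixed list is $K_{3,7}$, $K_{4,5}$, $K_8$, $\mathcal A_1=K_3+(K_3\cup K_2)$ and the graph of Figure~\ref{fig:wow}, while yours is $K_8$, $K_{4,5}$, $K_{3,7}$, $K_{3,5}$ and $K_n$ ($n\ge 7$), with a remark disposing of $\mathcal A_1$ (which indeed contains $K_6$ and, having eight vertices, also $P_6$).

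There is, however, one branch where your reduction claim is literally false: Case~2b of Proposition~\ref{genus 7}, the group $G_4=\mathbb Z_{p^2}\rtimes\mathbb Z_q$ with $\Gamma(G_4)\cong K_2+pK_{1,p}$, whose exhibited obstruction is the graph of Figure~\ref{fig:wow} rather than any of your five graphs. For $p\ge 5$ your $K_{3,5}$ does occur there (the two apex vertices and one star centre against that centre's five leaves), but for $p=3$, i.e.\ $\Gamma(\mathbb Z_9\rtimes\mathbb Z_2)\cong K_2+3K_{1,3}$, none of $K_8$, $K_7$, $K_{4,5}$, $K_{3,7}$, $K_{3,5}$ is a subgraph: the largest clique is $K_4$, and any two distinct star centres have only the two apex vertices as common neighbours, so no three vertices have five common neighbours and no four vertices of degree at least five have the required common neighbourhood. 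The conclusion still holds --- $K_2+3K_{1,3}$ does contain all five forbidden graphs directly ($K_{1,5}$ at an apex, $K_{3,3}$ on $\{u,v,c_1\}$ versus the three leaves of $c_1$, and a $C_6$, $P_5$, $P_6$ obtained by alternating apexes, centres and leaves across two star components) --- but this case must be verified separately, exactly as the paper does by carrying the Figure~\ref{fig:wow} graph as a sixth member of its list. A smaller point: your own recognition criterion that $K_n$ contains $K_{1,5}$ precisely when $n\ge 6$ would place $\mathbb Z_{p^6}$ (with $\Gamma\cong K_5$) in the $K_{1,5}$-free list, whereas part~(1) of the statement omits it; your bookkeeping, carried out honestly, would surface this discrepancy with the printed list.
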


\begin{proof}
In the proof of Theorem~\ref{t16}(1), when we observe that $\gamma(\Gamma(G))>1$, $\Gamma(G)$ contains one of $K_{3,7}$, $K_{4,5}$, $K_8$, $\mathcal{A}_1 = K_3+ (K_3\cup K_2)$, or the graph of Figure~\ref{fig:wow} as a subgraph.
In each of these cases, $K_{3,3}$, $K_{1,5}$, $C_6$, $P_5$ and $P_6$ are subgraphs of $\Gamma(G)$. Therefore, to classify the finite groups whose permutability graph of subgroups is one of $K_{3,3}$-free, $K_{1,5}$-free, $C_6$-free, $P_5$-free or $P_6$-free,
it is enough to consider the finite groups whose permutability graph of subgroups is either planar or toroidal. Thus, we need to investigate these properties only for groups
given in Theorems~\ref{genus 19} and \ref{t16}(1).

By Theorem~\ref{genus 19} and using \eqref{e1}, \eqref{e333}, \eqref{e111}, \eqref{222}, \eqref{e112}, \eqref{e4},
the only groups $G$ such that $\Gamma(G)$ is planar
and $K_{1,5}$-free are $\mathbb Z_{p^\alpha}(\alpha=2$, 3, 4, $5)$,
$\mathbb Z_{p^\alpha q}(\alpha=1$, $2)$, $\mathbb Z_p\times \mathbb Z_p(p=2$, $3)$, $Q_8$, and $S_3$.
By Theorem~\ref{t16}(1) and using \eqref{e1}, \eqref{e8},  \eqref{e2222}, \eqref{e1111}, there is no groups $G$ such that $\Gamma(G)$
is toroidal and $K_{1,5}$-free.
Thus, the proof of $(1)$ follows.

The proofs of parts $(2)$, $(3)$, $(4)$ and $(5)$ of this Corollary are similar to the proof of part $(1)$.
Notice that the classification in parts $(3)$ and $(4)$ is an extension of the classification in part $(2)$, and the classification of part $(5)$ is an extension of part $(4)$.
\end{proof}

Next, we consider the infinite groups. It is well known that the number of subgroups of an infinite group is infinite.
Therefore, in particular, when $G$ is an infinite abelian group, $\Gamma(G)$ contains $K_8$ as a subgraph. Thus, we have the following result.

\begin{thm}\label{pbt24}
The permutability graph of subgroups of any infinite abelian group is non-toroidal and non-projective-planar.
\end{thm}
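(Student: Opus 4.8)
The plan is to show that any infinite abelian group $G$ contains eight proper subgroups that pairwise permute, so that $\Gamma(G)$ contains $K_8$ as a subgraph; then $\gamma(\Gamma(G)) \geq \gamma(K_8) > 1$ and $\overline{\gamma}(\Gamma(G)) \geq \overline{\gamma}(K_8) > 1$ by Theorem~\ref{genus 100}, which proves the claim. Since $G$ is abelian, \emph{every} pair of subgroups permutes (as $HK = KH$ always holds in an abelian group), so by Lemma~\ref{l1} we have $\Gamma(G) \cong K_r$, where $r$ is the number of proper subgroups of $G$. Hence it suffices to exhibit at least $8$ proper subgroups of $G$ — equivalently, to show $r \geq 8$, whence $\Gamma(G) \supseteq K_8$.

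The key step is therefore purely group-theoretic: an infinite abelian group has infinitely many proper subgroups. I would argue as follows. If $G$ has an element $x$ of infinite order, then $\langle x \rangle \cong \mathbb{Z}$, and the subgroups $\langle x^n \rangle$ for $n = 2, 3, 4, \ldots$ are infinitely many distinct proper subgroups of $G$ (they are proper since e.g. $x \notin \langle x^2\rangle$, and distinct since $\langle x^m \rangle = \langle x^n \rangle$ forces $m = n$ for $m,n \geq 1$). If every element of $G$ has finite order, then $G$ is an infinite torsion abelian group; pick any infinite sequence of elements and take the cyclic subgroups they generate — infinitely many of these must be distinct, since each is finite and a finite union of finite subgroups is finite, so finitely many of them could cover only finitely many elements of the infinite set. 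In either case $G$ has infinitely many proper subgroups, so in particular at least eight, and $K_8 \subseteq \Gamma(G)$.

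Finally, since $K_8 \subseteq \Gamma(G)$, monotonicity of genus and nonorientable genus under the subgraph relation gives $\gamma(\Gamma(G)) \geq \gamma(K_8)$ and $\overline{\gamma}(\Gamma(G)) \geq \overline{\gamma}(K_8)$. By Theorem~\ref{genus 100}(1), $\gamma(K_8) = \lceil (5)(4)/12 \rceil = 2 > 1$, and by Theorem~\ref{genus 100}(2), $\overline{\gamma}(K_8) = \lceil (5)(4)/6 \rceil = 4 > 1$ (indeed the excerpt already notes $\gamma(K_n) > 1$ for $n \geq 8$ and $\overline{\gamma}(K_n) > 1$ for $n \geq 7$). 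Hence $\Gamma(G)$ is neither toroidal nor projective-planar.

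I do not expect any genuine obstacle here; the statement is essentially an immediate corollary of Lemma~\ref{l1} together with the elementary fact that an infinite abelian group has infinitely many subgroups. The only point requiring a word of care is the torsion case, where one must observe that an infinite set of elements cannot be covered by finitely many finite cyclic subgroups; this is the one place a reader might want a sentence of justification rather than a bare assertion.
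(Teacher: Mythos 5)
Your proposal is correct and follows essentially the same route as the paper: the paper simply notes that an infinite group has infinitely many subgroups, so an infinite abelian $G$ yields $K_8$ as a subgraph of $\Gamma(G)$ via Lemma~\ref{l1}, whence the genus bounds of Theorem~\ref{genus 100} apply. The only difference is that you supply the (correct) justification, split into the torsion and non-torsion cases, for the fact the paper asserts as well known.
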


%Next, we consider the infinite non-abelian groups.

We know of no examples of infinite non-abelian groups whose permutability graph of subgroups is toroidal or projective-planar.
The question of their existence or non-existence and the study of other graph-theoretical
properties of the permutability graph of subgroups will be subjects of future work.

%Now we pose the following
%\begin{prob}
%Classify the infinite non-abelian groups whose permutability graph of subgroups are toroidal.
%\end{prob}

%R. Rajkumar, Department of Mathematics, The Gandhigram Rural Institute -- Deemed University, Gandhigram -- 624 302, Tamil Nadu, India. \\ \emph{Email address: rrajmaths@yahoo.co.in}\\[2mm]
%P. Devi,  Department of Mathematics, The Gandhigram Rural Institute -- Deemed University, Gandhigram -- 624 302, Tamil Nadu, India.\\ \emph{Email address: pdevigri@gmail.com}\\[2mm]
%Andrei Gagarin, Department of Computer Science, Royal Holloway, University of London,
%Egham, Surrey, TW20 0EX, UK.\\ \emph{Email address: andrei.gagarin@rhul.ac.uk}

\begin{thebibliography}{99}

\bibitem{abdolla} Abdollahi, A., Akbary, S., Maimani, H. R. (2006). Non-commuting graph of a group. {\em J. Algebra.} 298(2):468--492.

%\bibitem{arc} Archdeacon, D., A Kuratowski theorem for the projective plane. PhD thesis,
%The Ohio State University, 1980.

\bibitem{arch}   Archdeacon, D. (2009). Open problems.
In: \emph{Topics in topological graph theory}, Encyclopedia Math. Appl. 128.
Cambridge: Cambridge Univ. Press,
pp.\,313--336.

\bibitem{asc} Aschbacher, M. (1993). Simple  connectivity  of  p-group  complexes.  {\em Israel  J. Math.} 82(1-3):1--43.

%\bibitem{AC2013}  Asir, T.,  Tamizh Chelvam, T. (2014). On the genus of generalized unit and unitary Cayley graphs of a commutative ring.
%{\em Acta Math. Hungar.}  142(2):444--458.

\bibitem{atlas} Atlas of Finite Group Representations, \texttt{http://web.mat.bham.ac.uk/atlas/v2.0/}



%\bibitem{bal1} Ballester-Bolinches, A., John Cossey.  (2012). Graphs, partitions and classes of groups. {\em Monatsh.
%Math.} 166:309318.

\bibitem{binachi 2} Bianchi, M., Gillio. A.,  Verardi, L. (1995). Finite groups and subgroup-permutability.
{\em Ann. Mat. Pura Appl.} 169(1):251--268.

\bibitem{binachi} Bianchi, M., Gillio. A.,  Verardi, L. (2001). Subgroup-permutability and affine planes. { \em Geometriae Dedicata} 85(1-3):147--155.

\bibitem{bou} Bouchet, A. (1978). Orientable and nonorientable genus of the complete bipartite graph, {\it J. Combin. Theory Ser. B} 24(1):24--33.

\bibitem{boh-reid} Bohanon, J. P., Reid, L. (2006). Finite groups with planar subgroup lattices. {\em J. Algebraic Combin.} 23(3):207--223.

\bibitem{burn} Burnside, W. (1955). {\it Theory of groups of finite order}. Cambridge: Dover Publications.

\bibitem{cameron} Cameron, P. J., Ghosh, S. (2011). The power graph of a finite group.
{\em Discrete Math.}  311(13):1220--1222.

\bibitem{cole} Cole, F. N., Glover, J. W., (1893). On groups whose orders are products of three
prime factors, {\it American Journal of Mathematics}, 15(3):191--220.


\bibitem{and} Gagarin, A., Myrvold, W., Chambers, J. (2009). The obstructions for toroidal graphs with no $K_{3,3}$'s.
{\em Discrete Math.}  309(11):3625--3631.


\bibitem{binachi 1} Gillio, A.,  Verardi, L. (1996). On finite groups with a reducible permutability-graph. {\it Ann. Mat. Pura Appl.}  171(1):275--291.

\bibitem{glov}  Glover, H. H.,  Huneke, J. P.,  Wang, C. S. (1979). $103$ graphs that are irreducible for the projective plane. {\it J.  Combin. Theory Ser. B} 27(3):332--370.


\bibitem{goren}  Gorenstein, D. (1968). {\em Finite Groups}. New York: Harper and Row.

%\bibitem{harary} F. Harary, Graph Theory (Addison-Wesley, Philippines, 1969).

\bibitem{hung} Chiang-Hsieh, H.-J. (2008). Classification of rings with projective
zero-divisor graphs. {\em J. Algebra}  319(7):2789--2802.

\bibitem{KK2005} Kocay, W., Kreher, D. (2005). {\em Graphs, Algorithms, and Optimization}. Boca Raton: CRC Press.

\bibitem{lin} Lin,  H. L. (1974). On groups of order $p^2 q, p^2 q^2.$ {\em Tamkang J. Math.}
5:167--190.

\bibitem{MWY2012}  Maimani, H. R.,  Wickham, C.,  Yassemi, S. (2012). Rings whose total graphs have genus at most one.
{\em Rocky Mountain J. Math.}  42(5):1551--1560.

%\bibitem{manz}  Manz, O.,  Staszewski, R.,  Willems, W. (1988). On the number of components of a graph related to character degrees. Proc. AMS, 103: 31--37.

\bibitem{pra}  Neufeld, E.,  Myrvold, W. (1997). Practical toroidality testing. In: \emph{Proc. 8th Annual ACM-SIAM Symposium on Discrete Algorithms (SODA)}, pp.\,574--580.

\bibitem{raj} Rajkumar, R.,  Devi, P. (2014). Planarity of permutability graphs of subgroups of groups. {\em J. Algebra Appl.} 13(3):\,Article No.\,1350112 (15 pages).

\bibitem{raj1} Rajkumar, R.,  Devi, P. (2015). On permutability graphs of subgroups of groups. \emph{Discrete Math. Algorithm. Appl.} 7(2):\,Article No.\,1550012 (11 pages). DOI: {\texttt{10.1142/S1793830915500123}}

\bibitem{rotman}  Rotman, J. J. (1995). {\em An Introduction to the Theory of Groups}.
New York: Springer-Verlag.

\bibitem{scott} Scott, W. R. (1964). {\em Group Theory}. New York: Dover.

\bibitem{thomp}  Thompson, J. G. (1968). Non-solvable finite groups all of whose local subgroups are solvable I.
{\em Bull. Amer. Math. Soc.} 74(3):383--437.

\bibitem{W2006}  Wang, H.-J. (2006). Zero-divisor graphs of genus one. {\em J. Algebra}  304(2):666--678.

\bibitem{arthur}  White, A. T. (1973). {\em Graphs, Groups and Surfaces}.
North-Holland Mathematics Studies, no. 8.
New York: American Elsevier Publishing Co. Inc.

\bibitem{will} Williams, J. S. (1981). Prime graph components of finite groups. {\em J. Algebra}   69(2):487--513.


\end{thebibliography}
\end{document}